\DeclareFontFamily{OT1}{rsfs}{}
\DeclareFontShape{OT1}{rsfs}{n}{it}{<-> rsfs10}{}
\DeclareMathAlphabet{\mathscr}{OT1}{rsfs}{n}{it}
\newtheoremstyle%
{custom}%
{}
{}
{}
{}
{}
{.}
{ }
{\thmname{}
\thmnumber{}%
\thmnote{\bfseries #3}}%
\newtheoremstyle%
{Theorem}%
{}%
{}%
{\itshape}%
{}%
{}%
{.}%
{ }%
{\thmname{\bfseries #1}%
\thmnumber{\;\bfseries #2}%
\thmnote{\;(\bfseries #3)}}%
\theoremstyle{Theorem}
\newtheorem{theorem}{Theorem}[section]
\newtheorem{corollary}[theorem]{Corollary}
\newtheorem{lemma}[theorem]{Lemma}
\newtheorem{proposition}[theorem]{Proposition}
\theoremstyle{definition}
\newtheorem{definition}[theorem]{Definition}
\newtheorem{example}[theorem]{Example}
\newtheorem{remark}[theorem]{Remark}
\newtheorem{problem}[theorem]{Problem}
\newtheorem{conjecture}[theorem]{Conjecture}
\newtheorem{question}{Question}
\newcommand{\A}{\mathcal{A}}
\newcommand{\QQ}{\mathbf{Q}}
\newcommand{\CC}{\mathbf{C}}
\newcommand{\FF}{\mathbf{F}}
\newcommand{\Fq}{\mathbf{F}_q}
\newcommand{\ZZ}{\mathbf{Z}}
\newcommand{\Spec}{ \mathrm{Spec}}
\newcommand{\PP}{\mathbf{P}}
\newcommand{\Tr}{\mathrm{Tr}}
\newcommand{\rk}{\mathrm{rk}}
\newcommand{\RR}{\mathbf{R}}
\newcommand{\ord}{\mathrm{ord}}
\DeclareMathOperator{\Res}{Res}
\DeclareMathOperator{\inv}{inv}
\DeclareMathOperator{\lcd}{lcd}
\DeclareMathOperator{\Span}{Span}
\DeclareMathOperator{\Br}{Br}
\newcommand{\End}{\operatorname{End}}
\newcommand{\NN}{\mathbf{N}}
\newcommand{\alg}{\operatorname{alg}}
\newcommand{\OO}{\mathcal{O}}
\newcommand{\ind}{\operatorname{index}}
\newcommand{\Disc}{\operatorname{Disc}}
\newcommand{\gal}{\operatorname{gal}}
\newcommand{\maxp}{\operatorname{maxp}}
\DeclareMathOperator{\Gal}{Gal}
\DeclareMathOperator{\rd}{rd}
\newcommand{\avlink}[1]{\href{http://www.lmfdb.org/Variety/Abelian/Fq/#1}{\textsf{#1}}}
\newcommand{\nflink}[1]{\href{http://www.lmfdb.org/NumberField/#1}{\textsf{#1}}}
\begin{document}

\title{Isogeny Classes of Abelian Varieties over Finite Fields in the LMFDB}
\author{Taylor Dupuy, Kiran Kedlaya, David Roe, Christelle Vincent}

\maketitle

\begin{abstract}
This document is intended to summarize the theory and methods behind \verb+fq_isog+ collection inside the \verb+ab_var+ database in the LMFDB as well as some observations gleaned from these databases. 
This collection consists of tables of Weil $q$-polynomials, which by the Honda-Tate theorem are in bijection with isogeny classes of abelian varieties over finite fields. 
\end{abstract}

\tableofcontents

\section{Introduction}

The LMFDB (L-Functions and Modular Forms Database) includes a database of isogeny classes of abelian varieties defined over finite fields. 
This database can be accessed at \url{https://www.lmfdb.org/Variety/Abelian/Fq/}.
The purposes of this paper are on the one hand to document the theoretical results on which these methods depend (\S\ref{S:background}) and some of the algorithms used to compile the data (\S\ref{S:algorithms}); and on the other hand to extract some observations from the compiled data, including comparison of some statistical data with relevant heuristics
(\S\ref{S:statistics}) and collecting some examples pertaining to theorems and conjectures in the literature
(\S\ref{S:examples}).

For small dimension $g$ and prime power $q=p^r$, the database contains searchable lists of complete sets of isogeny classes of abelian varieties of dimension $g$ over $\Fq$. 
Table \ref{T:number-of-classes} shows the range of $g$ and $q$ covered by the database as well as the total number of isogeny classes of a given dimension. 

\begin{table}[h]
	\begin{center}
	\begin{tabular}{ccc}
		Dimension & Bound on $q$ & Isogeny class count \\  \hline
		1  & 499 &  6184 \\
		2 & 211 &  1253897\\
		3 & 25 & 1055307\\
		4 & 5 & 183607 \\
		5 & 3 & 281790\\
		6 & 2 & 164937 \\
	\end{tabular}
	\end{center}
\caption{\label{T:number-of-classes}The number of isogeny classes for each dimension.}
\end{table}
We note that by ``Bound on $q$" we mean that the set of isogeny classes of abelian varieties over $\Fq$ has been computed for every prime power less than or equal to $q$.
In dimensions $g=1$ and $2$, isogeny classes of abelian varieties defined over $\Fq$ for powers of $2, 3, 5,$ and $7$ up to $1024$ have also been included.
The isogeny class count is the total count of isogeny classes in the database, including for $q$ a power of $2,3,5,$ and $7$ up to $1024$ in dimensions $1$ and $2$.

For each isogeny class, we report a variety of invariants including the following:
  \begin{itemize}
   \item L-polynomial;
   \item Newton polygon (and hence $p$-rank and ordinarity);
   \item endomorphism algebra;
   \item Frobenius angles and angle rank;
   \item whether the isogeny class contains a principally polarized abelian variety or even a Jacobian (when known);
   \item whether the isogeny class is simple;
   \item when it is simple, the number field defined by the L-polynomial, and the Galois group of its splitting field;
   \item when it is not simple, the simple isogeny factors appearing in its decomposition;
   \item whether the isogeny class is geometrically simple;
   \item the point counts over small extensions of the base field;
   \item if the isogeny class contains a Jacobian, the point counts for its curve over small extensions of the base field (if the isogeny class is not known to contain a Jacobian, the same computation is performed and referred to as the point counts of the ``virtual curve" associated to this isogeny class);
   \item whether the isogeny class is a base change of an isogeny class defined over a smaller field (i.e., whether the isogeny class is primitive), and if it is not primitive, the primitive isogeny classes for which it is a base change;
   \item the twists of the isogeny class: the isogeny classes to which it becomes isogenous after a base change.
  \end{itemize}
Some of the questions we kept in mind in our analysis of the data are:
\begin{itemize}
\item
How does the number of objects in the database vary with $g$ and $q$?
\item
How are these objects distributed if we sort with respect to the Galois group, Newton polygon, or angle rank?
\item
How is the number of points on the abelian variety distributed? Does this change if we fix other invariants?
\item
What are the extreme values of the number of points?
\end{itemize}

There are, of course, many more questions one can ask, but some would require further data compilation; this is especially true for questions regarding the distinction between Jacobians of curves and more general abelian varieties.
We discuss possible future directions of inquiry at the very end of the paper (\S\ref{S:future directions}).

We conclude this introduction by directing the reader to some of the highights of the paper.

\begin{itemize}
	\item A tabulation of Galois groups of Weil polynomials (\S \ref{subsec:Galois groups}). We attempt
	to explain the results using Malle's heuristics on the distribution of Galois groups of number fields, 
	but the constants appearing in this method do not fit well with the data.
	\item Verification of some cases of a conjecture by Pries on the existence of abelian varieties over $\FF_p$ with prescribed $p$-ranks
	(Problem~\ref{problem: Pries p-ranks}).
	\item A list of possible combinations of $p$-rank, angle rank and Galois groups for a fixed $p$-rank (\S\ref{subsec:angle rank}). This is closely related to the Tate conjecture for abelian varieties,
	as in the work of Lenstra and Zarhin \cite{Lenstra1993}.
	\item Some analysis of the endomorphisms algebras for abelian varieties over $\FF_q$ for small $g$ and $q$, in line with a question of Oort (\S\ref{subsec:endomorphism algebras}).
	\item An analysis of point counts on abelian varieties on average over isogeny classes, rather than over isomorphism classes (\S\ref{subsec:sato-aint}). The data suggests a fit not to the Sato-Tate distribution, but an alternate distribution which we have not found in the literature (called herein the isogeny Sato-Tate distribution).
	\item A characterization of maximal and minimal abelian varieties, including an explanation of how they are unrelated to maximality and minimality of curves, as well as some open questions regarding simple abelian varieties (\S\ref{subsec:maximal-minimal}).
	\item An example of a supersingular curve of genus 5 in characteristic $3$, whose existence we were unable to infer from any general constructions (Example~\ref{exa:hyperelliptic supersingular genus 5}).
	\item Some counterexamples against a conjecture by Ahmadi-Sparlinski concerning the angle rank of ordinary Jacobians (Example~\ref{exa:Ahmadi-Shparlinski}).
\end{itemize}

\subsection*{Acknowledgements}
The authors began this project during the semester program ``Computational aspects of the Langlands program'' held at ICERM in fall 2015.
Subsequent workshops in support of the project were sponsored by American Institute of Mathematics
and the Simons Foundation. 
In addition Dupuy was partially supported by the European Research Council under the European Unions Seventh Framework
Programme (FP7/2007-2013) / ERC Grant agreement no. 291111/ MODAG while working on this project.
Kedlaya was supported by NSF (grants DMS-1501214, DMS-1802161), IAS (visiting professorship 2018--2019), UCSD (Warschawski Professorship), and a Guggenheim Fellowship (fall 2015).  
Roe was supported by Simons Foundation grant 550033.
Vincent was partially supported by NSF (grant DMS-1802323).
We would especially like to thank Everett Howe for implementing software for detecting principal polarizations in isogeny classes and for his feedback on an earlier version of this manuscript.  
Thanks also to Borys Kadets for computing bounds on point counts of abelian varieties over $\FF_5$ and $\FF_7$.

\section{Background}
\label{S:background}

In this section, we recall various standard facts about abelian varieties and their zeta functions.

\subsection{Weil Numbers, Characteristic Polynomials of Frobenius, and Zeta Functions}
Here we follow \cite{Fontaine2008}*{pg. 9} (up to a sign convention). 
Let $\QQ^{\alg}$ be an algebraic closure of $\QQ$, $\ZZ^{\alg}$ be the ring of algebraic integers in $\QQ^{\alg}$, and $w\in \ZZ$. 
A \emph{Weil $q$-number of weight $w$}
is an element $\alpha \in \QQ^{\alg}$ satisfying:
\begin{enumerate}
 \item there exists $i\in \NN$ such that $q^i\alpha \in \ZZ^{\alg}$; and
 \item for any embedding $\psi \colon \QQ^{\alg} \to \CC$, $\vert \psi (\alpha)\vert = q^{w/2}$.
\end{enumerate}
Hereafter we only consider Weil $q$-numbers which are \emph{effective}, meaning that $\alpha \in \ZZ^{\alg}$
(and hence $w \geq 0$).

Such numbers arise from the zeta functions of varieties over finite fields as follows:
Let $X$ be a variety over $\Fq$, a finite field of cardinality $q$ and characteristic $p$. Then we define 
$$Z(X/\Fq,T) := \exp\left( \sum_{n\geq 1} \# X(\FF_{q^n}) \frac{T^n}{n} \right).$$
For any Weil cohomology theory $X \mapsto H^w(X)$ (e.g., $\ell$-adic \'etale cohomology for $\ell \neq p$,
or Berthelot's $p$-adic rigid cohomology), we have
$$
Z(X/\Fq,T) = \prod_{w=0}^{2\dim(X)} L_w(T)^{(-1)^{1+w}}, \qquad L_w(T) := \det(1 - F T | H^w(X)),
$$
where $F \colon X \to X$ denotes the action of Frobenius.

 If $X$ is smooth and proper of dimension $n$, then the eigenvalues of $F$
on $H^w(X)$ are Weil $q$-numbers of weight $w$; for \'etale cohomology this is Deligne's theorem \cite{Deligne1974},
while for rigid cohomology it follows from Deligne's theorem by a result of Katz--Messing \cite{Katz1974}.
Also, if we define $\zeta(X,s) := Z(X/\Fq,q^{-s})$, then we have a functional equation
$$\zeta(X,n-s) = \pm (q^{n/2} q^{-s})^{\chi_{top}} \zeta(X,s) $$
which follows from Poincar\'e duality (see \cite{Freitag1988}*{\S II.1} for the \'etale case and
\cite{Kedlaya2006a} for the $p$-adic case). 
In the displayed formula above, $\chi_{top}$ is the topological Euler characteristic; if $X$ is the reduction of a variety over a number field then $\chi_{top}$ is the alternating sum of the Betti numbers of the cohomology of the complex manifold obtained by taking the $\CC$-points of the variety in characteristic zero. 

In the special case where $X=A$ is an abelian variety of dimension $g$, 
$\dim H^1(A) = 2g$ and $H^w(A) = \wedge^w H^1(A)$ for $w=0,\dots,2g$. We summarize this second relation by
saying that $L_w(T) = \wedge^w L_1(T)$. We refer to $L_1(T)$ as the \emph{$L$-polynomial} of $A$ and to its reverse $\det(T - F | H^1(X))$ as the \emph{characteristic polynomial} of $A$, or the \emph{Weil polynomial} of $A$. We use these two terms interchangeably throughout the text.

\subsection{Weil Polynomials}

Define a \emph{Weil $q$-polynomial} (or Weil polynomial when $q$ is understood)
to be a monic polynomial over $\ZZ$ whose roots are all Weil $q$-numbers of weight $1$. 
(Since we are exclusively interested in abelian varieties, we consider only Weil $q$-numbers of weight 1 in what follows.)
A standard first step in classifying Weil polynomials is the following observation left as an exercise to the reader. 

\begin{proposition}
\label{E:weil-q-number-example}
Let $f\in \QQ[T]$ be an irreducible polynomial with all real roots. 
Let $\beta \in \RR$ satisfy $f(\beta)=0$. 
Choose $q=p^n$ such that for all $\psi \colon \QQ(\beta) \to \RR$ we have  $\psi(\beta)^2 - 4q <0.$
If $\pi$ is a zero of $T^2 - \beta T + q=0$, then $\pi$ is a Weil $q$-number of weight $1$.  
\end{proposition}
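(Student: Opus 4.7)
The proposal has two parts, matching the two conditions in the definition of a Weil $q$-number.

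First, I would verify the integrality condition. The element $\pi$ satisfies the monic polynomial $T^2 - \beta T + q \in \QQ(\beta)[T]$. Since $\beta$ is a root of a monic integer polynomial (because $f \in \QQ[T]$ is the minimal polynomial of $\beta$; strictly speaking, after the assumption that $\beta$ appears as a coefficient of a Weil polynomial, $\beta \in \ZZ^{\alg}$, or else one should clear denominators), $\beta$ is an algebraic integer and therefore $\pi$, being integral over $\ZZ[\beta]$, lies in $\ZZ^{\alg}$. In particular the first condition ($q^i \alpha \in \ZZ^{\alg}$) holds with $i = 0$, so $\pi$ is even effective.

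Second, I would verify that $|\psi(\pi)| = q^{1/2}$ for every embedding $\psi \colon \QQ^{\alg} \to \CC$. Any such $\psi$ restricts to a $\QQ$-embedding of $\QQ(\beta)$ into $\CC$. By hypothesis every root of $f$ is real, so $\psi(\beta)$ is real and the restriction is one of the real embeddings of $\QQ(\beta)$ appearing in the statement. Applying $\psi$ to the relation $\pi^2 - \beta \pi + q = 0$, I find that $\psi(\pi)$ is a root of $T^2 - \psi(\beta) T + q$. The discriminant $\psi(\beta)^2 - 4q$ of this quadratic is negative by assumption, so its two roots are a complex-conjugate pair; their product equals $q$, and since they are conjugate their product also equals $|\psi(\pi)|^2$. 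Hence $|\psi(\pi)|^2 = q$, i.e.\ $|\psi(\pi)| = q^{1/2}$, which is the weight $1$ condition.

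Both steps are essentially immediate once the right observation is made; the only mild subtlety is the passage from embeddings of $\QQ^{\alg}$ to embeddings of $\QQ(\beta)$, which is precisely where the hypothesis that $f$ is totally real gets used. There is no substantive obstacle to overcome, so there is no "hard step" to flag beyond making the quadratic-formula computation transparent.
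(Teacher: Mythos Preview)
The paper does not actually give a proof of this proposition; it is explicitly ``left as an exercise to the reader.'' Your argument is the intended one: restrict an arbitrary complex embedding of $\QQ^{\alg}$ to $\QQ(\beta)$, use total realness of $f$ so that the image lands in $\RR$, and then read off $|\psi(\pi)|^2=q$ from the constant term of the quadratic once its discriminant is negative. The caveat you flag about integrality is real---as stated, $f\in\QQ[T]$ does not force $\beta\in\ZZ^{\alg}$, and without that the first condition in the definition can fail for primes other than $p$---but this is a wrinkle in the statement rather than in your proof, and in the intended application $\beta$ is a sum of conjugate algebraic integers and hence integral.
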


As a converse, we have the following characterization of Weil $q$-numbers.

\begin{lemma}[Weil $q$-number characterization]
\label{L:weil-q-number-characterization}
Let $\pi$ be a Weil $q$-number of weight $1$.
\begin{enumerate}
\item\label{I:real-case} If $\QQ(\pi)$ has a real embedding, then $\pi=\pm\sqrt{q}$.
\item Suppose that $\psi \colon \QQ(\pi) \to \CC$ is a non-real embedding, then
\begin{enumerate}
\item $\beta:= \pi + q/\pi$ is totally real;
\item $\QQ(\pi)$ is a CM-field with maximal totally real subfield $\QQ(\beta)$, and $\pi$ has no real embeddings;
\item $\pi$ is a solution of $T^2-\beta T+ q$ where $\psi(\beta^2 - 4q)<0$ for all $\psi \colon \QQ(\pi) \to \CC$.
\end{enumerate}
\end{enumerate}
\end{lemma}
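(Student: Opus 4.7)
The plan is to treat the two parts in sequence, using the absolute value condition on embeddings as the central tool and leveraging that Galois conjugates share a minimal polynomial.

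For part (1), I would start from a real embedding $\psi \colon \QQ(\pi) \to \RR$, extend it to $\QQ^{\alg} \to \CC$, and use $|\psi(\pi)| = q^{1/2}$ to conclude $\psi(\pi) = \pm\sqrt{q}$. Since $\psi(\pi)$ is a Galois conjugate of $\pi$, it shares the same minimal polynomial over $\QQ$, which is either $T \mp \sqrt{q}$ (when $q$ is a perfect square) or $T^{2} - q$. In either case $\pi$ must itself equal $\pm\sqrt{q}$.

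For part (2a), the key observation is that for \emph{every} embedding $\psi$ we have $\psi(\pi)\,\overline{\psi(\pi)} = |\psi(\pi)|^{2} = q$, which rearranges to $\overline{\psi(\pi)} = q/\psi(\pi)$. Hence
\[
\psi(\beta) = \psi(\pi) + q/\psi(\pi) = \psi(\pi) + \overline{\psi(\pi)} = 2\,\mathrm{Re}(\psi(\pi)) \in \RR.
\]
Since every embedding of $\QQ(\beta)$ into $\CC$ extends to one of $\QQ(\pi)$, this shows $\beta$ is totally real. For part (2b), I would invoke the contrapositive of (1): the existence of a non-real embedding precludes $\pi$ from being $\pm\sqrt{q}$, hence $\QQ(\pi)$ cannot have any real embedding either (else part (1) would force $\pi = \pm\sqrt{q}$, whose field $\QQ(\sqrt{q})$ is totally real, contradicting the non-realness of $\psi$). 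Rewriting the identity $\beta = \pi + q/\pi$ gives the relation $\pi^{2} - \beta\pi + q = 0$, so $[\QQ(\pi):\QQ(\beta)] \le 2$; equality holds because $\QQ(\beta) \subseteq \RR$ while $\pi \notin \RR$. Thus $\QQ(\pi)$ is a totally imaginary quadratic extension of a totally real field, i.e., CM with $\QQ(\beta)$ as maximal totally real subfield.

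Part (2c) then drops out: the minimal polynomial of $\pi$ over $\QQ(\beta)$ is $T^{2} - \beta T + q$, and for any embedding $\psi$ the two roots of $T^{2} - \psi(\beta)T + q$ are $\psi(\pi)$ and $\overline{\psi(\pi)}$, a genuinely complex-conjugate pair by (2b); equivalently, the discriminant $\psi(\beta)^{2} - 4q$ is negative.

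I expect no serious obstacle; the only subtle point is (2b), where one must be careful to combine the contrapositive of (1) with a degree count over $\QQ(\beta)$ to simultaneously establish that $\pi$ has no real embedding and that $[\QQ(\pi):\QQ(\beta)] = 2$, as these together are what certify the CM structure.
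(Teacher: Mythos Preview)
Your proposal is correct and follows essentially the same approach as the paper: both hinge on the identity $\overline{\psi(\pi)} = q/\psi(\pi)$ to show $\beta$ is totally real, and both derive (2c) from the quadratic $T^{2} - \beta T + q$ together with the absence of real embeddings. The one notable difference is in (2b): the paper simply cites \cite{Honda1967}*{Proposition~4} for the CM claim, whereas you supply a self-contained argument via the contrapositive of (1) and the degree count $[\QQ(\pi):\QQ(\beta)] = 2$; your version is more informative for the reader but otherwise equivalent.
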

\begin{proof}
\begin{enumerate}
\item In this case, $\psi(\pi)^2 = \psi(\pi)\overline{\psi(\pi)} = q$. 
\item 
\begin{enumerate}
\item Since $\pi$ has absolute value $\sqrt{q}$ in every embedding, $q/\pi$ is its complex conjugate.
\item The fact that $\QQ(\pi)$ is CM is \cite{Honda1967}*{Proposition 4}, and the second part follows.
\item Consider the equation 
$$T^2 - \beta T + q=0.$$
Let $\gamma = \pi-q/\pi$ and note that $\beta^2 - 4q = \gamma^2$.
The quadratic formula gives $\frac{1}{2}( \beta \pm \gamma)$ as the solutions, which simplify to $\pi$ and $q/\pi$.
The second half of the statement follows from the fact that $\pi$ has no real embeddings.
\end{enumerate}
\end{enumerate}
\end{proof}

\subsection{Weil Polynomials and Isogeny Classes of Abelian Varieties}
\label{subsec:Honda-Tate}

For most classes of varieties over a fixed finite field, among the possible zeta functions consistent with the Weil conjectures, it is difficult to predict in advance exactly which ones occur. However, for abelian varieties, this question is completely solved by the Honda-Tate theorem; we follow the treatment of this result given in \cite{Waterhouse1971}. 

Let $A$ and $B$ be abelian varieties over $\Fq$ with characteristic polynomials
$$
P_A(T) := \det(T - F\vert H^1(A)), \qquad P_B(T) := \det(T-F\vert H^1(B));
$$
these are Weil $q$-polynomials.
The Honda-Tate theorem makes the following assertions:
\begin{itemize}
\item
$P_A$ divides $P_B$ if and only if $B$ is isogenous (over $\FF_q$) to a product in which $A$ occurs as a factor; in particular, $A$ and $B$ are isogenous if and only if $P_A = P_B$. 
\item
If $A$ is simple, then $P_A = h_A^{e}$ for some irreducible $h_A(T) \in \ZZ[T]$
and some positive integer $e = e_A$ which can be read off explicitly from $h_A$ (see below).
\item
Every irreducible Weil $q$-polynomial occurs as $h_A$ for some $A$ (which is unique up to isogeny).
\end{itemize}

We explain the rule for computing $e_A$ for $A$ simple in the context of analyzing the $\QQ$-endomorphism algebra $E := \End^0(A/\Fq)$.
Let $\pi$ be the class of $T$ in $\QQ(\pi) := \QQ[T]/(h_A(T))$, and identify $\pi$ with the action of Frobenius in $E$. Then $E$ is a division algebra with center $\QQ(\pi)$
and $$2g = [\QQ(\pi):\QQ] e_A, \qquad e_A = \sqrt{[E:\QQ(\pi)]}.$$
The Brauer invariant $\inv_v(E)$ of $E$ at a place $v$ of $\QQ(\pi)$ is given as follows:
\begin{itemize}
\item
For $v$ finite and not lying above $p$, $\inv_v(E)  = 0$.
\item
For $v$ finite and lying above $p$,
 \begin{equation*}
\inv_v(E) \equiv - (\log_q \left| \pi \right|_v) [K_v:\QQ_p] \pmod{\ZZ}.
 \end{equation*}
\item
For $v$ archimedean, $\inv_v(E) = \frac{1}{2}$ if $v$ is real (which by Lemma~\ref{L:weil-q-number-characterization}
means
$\pi = \pm \sqrt{q}$) and 0 otherwise.
\end{itemize}
The exponent $e_A$ can now be computed as
\[
e_A = \lcd_v\{\inv_v(E)\},
\]
where $\lcd$ denotes the least common denominator. In particular, $e_A = 1$ whenever $A$ is ordinary or ``almost ordinary'' (see \S\ref{subsec:Newton polygons}).

\subsection{Newton Polygons, \texorpdfstring{$p$}{p}-rank and Ordinarity}
\label{subsec:Newton polygons}
Let $A$ be an abelian variety over $\Fq$ and define $P_A$ as in \S\ref{subsec:Honda-Tate}.
The \emph{normalized Newton polygon} of $P_A$ is the lower convex hull of the set
\[
\left\{ \left(i, \frac{\ord_p(a_i)}{\ord_p(q)} \right): i=0,\dots,2g \right\}
\]
where $P_A(T) = T^{2g} + a_1 T^{2g-1} + \cdots + a_{2g}$.
(This is invariant under base extension.)
The normalized Newton polygon of $P_A$ is the graph of a piecewise linear function on $[0,2g]$ with changes of slope only at integer values. In particular, on each of the intervals $[0,1],\dots,[2g-1,g]$ the function has a unique slope; we call these the \emph{slopes} of $P_A(T)$. For any place $v$ of $K = \QQ(\pi)$ above $p$,
the slopes of $P_A(T)$ coincide with the ratios $v(\alpha)/v(q)$ as $\alpha$ varies over the roots of $P_A$.

The normalized Newton polygon of $A$ satisfies the following properties:
\begin{itemize}
\item
The left endpoint is $(0,0)$ and the right endpoint is $(2g,g)$.
\item
The vertices are all lattice points with nonnegative second coordinate.
\item
The vertices are symmetric: $(i,j)$ is a vertex if and only if $(2g-i,g-j)$ is a vertex. Equivalently,
$(i,j)$ lies above the polygon if and only if $(2g-i,g-j)$ does so.
\end{itemize}
We say that any convex polygon satisfying these conditions is \emph{eligible in dimension $g$}. For $N,N'$ two eligible polygons, we write $N \leq N'$ if $N'$ lies on or above $N$, and $N < N'$ if $N \leq N'$ and $N \neq N'$.
The eligible Newton polygons in dimension $g$ form a partially ordered set with unique minimal and maximal elements.
The minimal polygon is the one with vertices  $(0,0), (g,0), (2g, g)$, in which the slopes are $0,1$ (each with multiplicity $g$); when this occurs we say that $A$ is \emph{ordinary}. 
If we exclude the ordinary Newton polygon, then there is again a unique minimal polygon, with vertices $(0,0), (g-1,0), (g+1,1), (2g,g)$; when this occurs we say that $A$ is 
\emph{almost ordinary}, as in \cite{Lenstra1993}.
The maximal polygon is the one with vertices $(0,0), (2g,g)$, in which the slopes are $\frac{1}{2}$ (with multiplicity $2g$); when this occurs we say that $A$ is \emph{supersingular}.

Define the \emph{elevation} of an eligible Newton polygon $N$ in dimension $g$ 
as the number of lattice points $(i,j)$ with 
$1 \leq i \leq g, j \geq 0$ which lie strictly below $N$.
(This is not standard terminology.)

\begin{lemma} \label{L:add to chain}
Let $N,N'$ be eligible Newton polygons in dimension $g$ with $N < N'$. Then there exists an eligible Newton polygon $N''$ in dimension $g$ such that $N < N'' \leq N'$ and the elevation of $N''$ is one more than that of $N$.
\end{lemma}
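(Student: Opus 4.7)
The plan is to rewrite the elevation in a convenient form and then lift $N$ minimally to newly place a single lattice point below its graph. Viewing each eligible polygon $M$ as a piecewise linear function on $[0,2g]$, one checks that
$$\mathrm{elev}(M) = \sum_{i=1}^g \lceil M(i) \rceil,$$
since $\lceil M(i) \rceil$ counts the nonnegative integers $j$ with $j < M(i)$. Since $\mathrm{elev}(N) < \mathrm{elev}(N')$ and $\lceil N(i)\rceil \le \lceil N'(i)\rceil$ pointwise (from $N \le N'$), some $i \in \{1,\dots,g\}$ satisfies $\lceil N(i)\rceil < \lceil N'(i)\rceil$. I would pick such a pair $(i_0,j_0)$ with $j_0 := \lceil N(i_0)\rceil$, so that the lattice point $(i_0,j_0)$ satisfies $N(i_0) \le j_0 < N'(i_0)$, choosing $(i_0,j_0)$ so that $j_0$ is as small as possible, and breaking ties by making $i_0$ as large as possible.

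Next I would define $N''$ as the smallest eligible polygon with $N'' \ge N$ and $N''(i_0) > j_0$. Explicitly, locate vertices $(a,N(a))$ and $(b,N(b))$ of $N$ with $a \le i_0 \le b$ such that the broken-line path agreeing with $N$ on $[0,a] \cup [b,2g]$ and running from $(a,N(a))$ up to the first allowable lattice vertex strictly above $(i_0,j_0)$ and then down to $(b,N(b))$ is convex with lattice vertices; the symmetry condition on eligible polygons forces the corresponding modification to be performed at the image of $(i_0,j_0)$ under the symmetry involution, which gives an independent second kink when $i_0 < g$ and is automatic when $i_0 = g$. Among all valid convex choices, take the smallest.

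It then remains to verify (i) that $N''$ is eligible (lattice vertices and symmetry are built into the construction), (ii) that $N \le N'' \le N'$, where the left inequality holds by construction and the right one follows because $N'$ is itself an eligible polygon dominating $N$ with $N'(i_0) > j_0$, so by minimality of $N''$ we get $N'' \le N'$, and (iii) that $\mathrm{elev}(N'') = \mathrm{elev}(N)+1$.

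The main obstacle is (iii): one must show that the minimal lift introduces exactly one new lattice point into the count over $1 \le i \le g$. The minimality of $j_0$ rules out new lattice points at heights below $j_0$ entering the count, while the minimality of the lifting interval $[a,b]$, together with the lattice and symmetry constraints, should force $\lceil N''(i)\rceil = \lceil N(i)\rceil$ for every $i \in \{1,\dots,g\}\setminus\{i_0\}$ and $\lceil N''(i_0)\rceil = j_0+1$. Making this precise requires a careful case analysis of the convex-hull construction, in particular handling the possibility that the kink interval crosses the midpoint $x = g$ and interacts with its symmetric copy; this combinatorial verification is where the real work lies.
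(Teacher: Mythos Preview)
Your formula $\mathrm{elev}(M)=\sum_{i=1}^g\lceil M(i)\rceil$ is correct and your overall instinct---locate a lattice point trapped between $N$ and $N'$ and lift $N$ minimally past it---is reasonable, but the construction you sketch is substantially more involved than necessary and, as you concede, the only nontrivial step (that the elevation rises by exactly one) is left undone. Your ``smallest eligible $N''$ with $N''\ge N$ and $N''(i_0)>j_0$'' is not obviously well-defined (the constraint is an open condition), and the interaction of the lifted segment with convexity, the lattice-vertex requirement, and the symmetry involution really would demand the case analysis you anticipate.

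The paper avoids all of this with one move: instead of lifting $N$ above a chosen lattice point, \emph{delete a vertex} of $N$. Since $N<N'$, some vertex $(i_0,j_0)$ of $N$ lies strictly below $N'$ (otherwise $N$ and $N'$ agree at every breakpoint of $N$, and convexity of $N'$ forces $N=N'$). Let $N''$ be the lower convex hull of all lattice points lying on or above $N$ except $(i_0,j_0)$ and its symmetric partner. Then $N<N''\le N'$ is immediate: $N''\ge N$ because we hulled points on or above $N$; $N''>N$ because a genuine extreme point was removed; and $N''\le N'$ because every vertex of $N'$ is a lattice point on or above $N$ distinct from the two removed ones (those lie strictly below $N'$), hence belongs to the hulled set. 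For the elevation there is nothing to check: by construction every lattice point on or above $N$ other than the two removed ones is still on or above $N''$, so the only point newly strictly below $N''$ with first coordinate in $\{1,\dots,g\}$ is $(i_0,j_0)$ itself (taking $i_0\le g$ without loss of generality; its symmetric partner has first coordinate $2g-i_0\ge g$ and coincides with $(i_0,j_0)$ when $i_0=g$). Choosing a \emph{vertex} of $N$ is exactly what makes the elevation increment automatic---this is the idea your minimality conditions on $(i_0,j_0)$ are reaching for.
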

\begin{proof}
Since $N < N'$, there must exist a vertex $(i,j)$ of $N$ lying strictly below $N'$; by symmetry,
$(2g-i,g-j)$ also lies strictly below $N'$. Let $N''$ be the lower convex hull of the set of lattice points
lying on or above $N$ exclusive of $(i,j)$ and $(2g-i,g-j)$; this choice has the desired form.
\end{proof}
\begin{corollary} \label{cor:NPcat}
The poset of eligible Newton polygons in dimension $g$ is \emph{catenary}: any two maximal chains between the same endpoint have the same length (namely the difference in elevation).
\end{corollary}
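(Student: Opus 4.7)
The plan is to deduce catenarity directly from Lemma~\ref{L:add to chain} by showing that each covering relation in a maximal chain corresponds to an elevation increment of exactly $1$. Fix endpoints $N < N'$ and consider any maximal chain
\[
N = N_0 < N_1 < \cdots < N_k = N'.
\]
I would argue that for each $0 \leq i < k$, the elevation of $N_{i+1}$ is exactly one more than that of $N_i$.

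For the inequality, apply Lemma~\ref{L:add to chain} to the pair $N_i < N_{i+1}$: it yields an eligible polygon $N''$ with $N_i < N'' \leq N_{i+1}$ whose elevation is one more than that of $N_i$. Maximality of the chain forces $N'' = N_{i+1}$, since otherwise one could insert $N''$ strictly between $N_i$ and $N_{i+1}$ and obtain a strictly longer chain between the same endpoints. Hence $N_{i+1}$ has elevation equal to the elevation of $N_i$ plus $1$. (Here I am using the obvious fact that the elevation is strictly monotone in the order $\leq$, which follows because $N < N'$ implies the region strictly below $N'$ contains the region strictly below $N$ and at least one additional lattice point.)

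Telescoping over $i = 0,\dots,k-1$, the length $k$ of the chain equals the difference in elevation between $N'$ and $N$, a quantity depending only on the endpoints. Since the same conclusion holds for any other maximal chain between $N$ and $N'$, all maximal chains with fixed endpoints have the same length, which is by definition the catenary condition.

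There is no real obstacle: the corollary is purely a formal consequence of Lemma~\ref{L:add to chain} together with the trivial monotonicity of elevation. The only point requiring even momentary care is verifying that the $N''$ produced by the lemma is strictly above $N_i$ (so that it is a candidate for insertion) and satisfies $N'' \leq N_{i+1}$ (so that insertion preserves the chain structure), both of which are built into the statement of the lemma.
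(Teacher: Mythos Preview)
Your proof is correct and is exactly the intended argument: the paper states the corollary without proof, treating it as immediate from Lemma~\ref{L:add to chain}, and your write-up supplies precisely the standard details (each step in a maximal chain must coincide with the $N''$ produced by the lemma, so elevation increases by exactly one at each step). The parenthetical about strict monotonicity of elevation is not even needed for the main line of reasoning, since the lemma already hands you $N_i < N'' \leq N_{i+1}$ directly.
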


For any positive integer $d$, by \cite{Katz1979} the Newton polygon defines a locally closed stratification on the coarse moduli space $\A_{g,d}$ of $g$-dimensional abelian varieties equipped with a polarization of degree $d^2$.

\begin{theorem} \label{T:NP stratification}
Let $N$ be an eligible Newton polygon of dimension $g$.
\begin{enumerate}
\item
There is a (nonempty) stratum of $\A_{g,d}$ with Newton polygon $N$.
\item
Each irreducible component of this stratum has codimension equal to the elevation of $N$.
\item
If $N$ is not the supersingular Newton polygon, then the corresponding stratum is geometrically irreducible.
\end{enumerate}
\end{theorem}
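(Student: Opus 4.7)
The overall plan is to assemble the three parts from foundational results on Newton stratification, with the catenary structure of Corollary~\ref{cor:NPcat} serving as inductive scaffolding. The key observation is that an eligible polygon of dimension $g$ is exactly a symmetric Newton polygon for a $p$-divisible group of height $2g$ and dimension $g$, so the theorems of Oort and collaborators on Newton strata in $\A_g$ apply directly, and the polarized version is then obtained by descent along the forgetful map $\A_{g,d} \to \A_g$.

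For existence (1), I would induct on the elevation of $N$. The base case is the ordinary polygon (elevation $0$), whose stratum is open and dense in $\A_{g,d}$ and visibly nonempty: one takes a product of ordinary elliptic curves and equips it with an appropriate polarization of the desired degree. For the inductive step, given $N$ of positive elevation, Lemma~\ref{L:add to chain} applied in reverse (between $N$ and the ordinary polygon) produces an eligible $N^{-} < N$ of elevation one smaller, whose stratum is nonempty by induction. Then Oort's theorem on the realization of symmetric Newton polygons by $p$-divisible groups (equivalently, via Honda-Tate, producing a Weil $q$-polynomial whose Newton polygon is $N$) supplies an abelian variety with Newton polygon exactly $N$; a polarization of degree $d^{2}$ is then obtained by composing any existing polarization with a suitable isogeny.

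For the codimension statement (2), I would combine Grothendieck-Katz upper semicontinuity (the locus where the Newton polygon is $\geq N$ is closed) with de Jong-Oort's purity theorem, which asserts that each jump in the Newton polygon occurs in pure codimension one. The catenary property of Corollary~\ref{cor:NPcat} then reduces the general formula to the single-step case, where one verifies via Serre-Tate canonical coordinates on the formal neighborhood of a point of the higher stratum that the sub-stratum cut out by raising the polygon by one elementary vertex removal (as in the construction of $N''$ in Lemma~\ref{L:add to chain}) is locally cut out by a single equation on the Dieudonn\'e display. Iterating gives codimension equal to the total elevation.

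For geometric irreducibility (3), I would invoke Chai-Oort's theorem on the irreducibility of non-supersingular Newton strata in $\A_g$, and propagate it to $\A_{g,d}$ by analyzing the fibers of the forgetful map on each stratum, or alternatively by an analogous Hecke-orbit argument carried out directly in $\A_{g,d}$. The supersingular case must be excluded because its irreducible component set is in bijection with a class set of ideals in a definite quaternion order, generically of cardinality greater than one. The main obstacle is this last transfer: the Chai-Oort theorem is most cleanly formulated in the principally polarized setting, and passing to arbitrary polarization degree $d$ either requires a careful verification that the relevant Hecke correspondences are transitive on geometric components of the $N$-stratum, or a reworking of the monodromy input in the degree-$d$ moduli; this is where the technical weight of the proof lies.
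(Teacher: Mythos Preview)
Your approach is broadly viable but differs substantially from the paper's, and in a way that makes your argument harder. The paper's proof does not attempt to construct abelian varieties with a prescribed Newton polygon, nor does it perform any local computation on deformation spaces. Instead it uses the supersingular locus as an anchor: by \cite[Theorem~4.9]{Li1998} one already knows that the supersingular stratum is nonempty with every component of the correct codimension (namely the elevation of the supersingular polygon). One then fixes a maximal chain $N_0 < \cdots < N_1$ from ordinary to supersingular passing through $N$ (via Lemma~\ref{L:add to chain}), considers the nested closed subschemes $X_{N_i}$ of points with polygon $\geq N_i$, and observes that by de Jong--Oort purity the codimension of $X_{N_i}$ increases by at most one at each step. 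Since the total increase over the chain equals the elevation of $N_1$, which is also the chain length, every step increases codimension by exactly one. Nonemptiness and the codimension formula for every $N$ in the chain fall out simultaneously from this pigeonhole argument, with no Honda--Tate input, no polarization gymnastics, and no Serre--Tate or Dieudonn\'e-display computation. Part (3) is then simply a citation of \cite[Theorem~A]{Chai2011}.

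A few specific issues with your proposal are worth flagging. Your induction for existence is vacuous: you set up an inductive hypothesis on $N^{-}$ but never use it, instead appealing directly to a realization theorem for $N$; the induction scaffolding adds nothing. Your codimension argument leans on ``locally cut out by a single equation on the Dieudonn\'e display,'' which is a genuine theorem (essentially the local structure results of Oort) rather than a verification one can do in passing; the paper's counting argument sidesteps this entirely. Finally, your worry about transferring Chai--Oort from $\A_g$ to $\A_{g,d}$ is overstated: the cited result already covers the polarized case, and the paper treats it as a black box.
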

\begin{proof}
For the supersingular Newton polygon, this is contained in \cite[Theorem~4.9]{Li1998}.
For general $N$, apply Lemma~\ref{L:add to chain} repeatedly to construct a maximal chain $N_0 < \cdots < N_1$ containing $N$, where $N_0$ and $N_1$ are the ordinary and supersingular Newton polygons. By the supersingular case, the length of the chain (which is the elevation of $N_1$) equals the codimension of each irreducible component of the supersingular stratum.

We now appeal to the de Jong--Oort purity theorem
\cite[Theorem~4.1]{DeJong2000}, which asserts that the Newton polygon stratification on $\A_{g,d}$ jumps purely in codimension 1.
For each Newton polygon $N$ in the chosen chain, the union $X_N$ of all strata corresponding to Newton polygons on or above $N$ is a closed subscheme of $\A_{g,d}$; the codimension of $X_N$ starts at 0 for $N = N_0$,
increases by at most 1 at each step, and ends with the elevation of $N_1$. Consequently, it must increase by exactly 1 at each step; from this, the first two claims follow.
The third claim is a theorem of Chai--Oort \cite[Theorem~A]{Chai2011}.
\end{proof}

\subsection{Galois Groups} 
Let $\pi$ be a Weil $q$-number.
By Lemma \ref{L:weil-q-number-characterization}, either $\pi = \pm \sqrt{q}$ or $\QQ(\pi)$ is CM;
assume hereafter that the second case occurs.
Write $\QQ(\pi)^{\gal}$ for the Galois closure of $\QQ(\pi)$ and set $2d = [\QQ(\pi) : \QQ] = \frac{2g}{e}$.
Let $G = \Gal(\QQ(\pi)^{\gal}/\QQ)$, considered as a subgroup of $S_{2d}$ by its action on the conjugates of $\pi$.

\begin{lemma} \label{lem:CMGal} Let $G$ be the Galois group of a Weil $q$-number $\pi$.
\begin{enumerate}
\item $G$ is a subgroup of $W_{2d} := C_2^d \rtimes S_d$ which acts transitively on the $2d$ roots, where here $W_{2d}$ is the wreath product of $C_2$ by $S_d$. 
\item $G$ contains complex conjugation, which is the unique nontrivial element of the center of $W_{2d}$.
\end{enumerate}
\end{lemma}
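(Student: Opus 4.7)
\noindent
The plan is to exploit the CM pairing on the roots of the minimal polynomial of $\pi$, then identify complex conjugation with the canonical central involution of the wreath product.

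For part (1), I would start by observing that the minimal polynomial $h_A(T)$ of $\pi$ over $\QQ$ satisfies $T^{2d} h_A(q/T) = q^d h_A(T)$: indeed, by Lemma~\ref{L:weil-q-number-characterization}, under every complex embedding each root $\alpha$ satisfies $\bar\alpha = q/\alpha$, so the map $\alpha \mapsto q/\alpha$ permutes the $2d$ roots and is induced by a $\QQ$-rational involution on $\QQ^{\alg}$-points of the root scheme. Hence the root set partitions canonically into $d$ unordered pairs $\{\alpha, q/\alpha\}$, and this partition is $G$-stable because $q \in \QQ$. Choosing an ordering of the pairs and an ordering within each pair identifies the full symmetric group of permutations preserving the partition with $W_{2d} = C_2^d \rtimes S_d$, and the $G$-action factors through this subgroup, yielding an embedding $G \hookrightarrow W_{2d}$. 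Transitivity on the $2d$ roots is the standard fact that the Galois group of an irreducible polynomial acts transitively on its roots, since $h_A$ is irreducible.

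For part (2), I would argue that since $\QQ(\pi)$ is CM, so is its Galois closure $\QQ(\pi)^{\gal}$ (the compositum of CM fields inside $\CC$ is CM); therefore complex conjugation restricts to a well-defined element $c \in G$ that is independent of the chosen complex embedding. Under the embedding $G \hookrightarrow W_{2d}$, the element $c$ sends each root $\alpha$ to $\bar\alpha = q/\alpha$, so it acts by flipping every $C_2$-factor while fixing the partition into pairs; that is, $c$ corresponds to $((1,1,\dots,1), e) \in C_2^d \rtimes S_d$.

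Finally I would verify by direct computation that the center of $W_{2d}$ is precisely $\{e, ((1,\dots,1), e)\}$: an element $(v,\sigma)$ must commute with every $(0,\tau)$, forcing $\sigma \in Z(S_d)$ and $v$ to be $S_d$-invariant in $C_2^d$, which (for all $d$, using the action of transpositions on $C_2^d$) gives $v \in \{(0,\dots,0),(1,\dots,1)\}$ and $\sigma = e$. Hence $c$ is the unique nontrivial central element, completing the lemma. The main obstacle is the bookkeeping in identifying the $G$-stable pairing on the roots with the standard wreath-product structure on $\{1,\dots,2d\}$; the center computation is short but worth doing explicitly to rule out the small-$d$ edge cases.
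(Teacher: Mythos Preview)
Your argument is correct and is essentially the standard CM-pairing proof that the paper defers to its citations (Dodson for $G\subseteq W_{2d}$ and $c\in G$, Honda for centrality of $c$); you simply unpack what those references contain. One small point: commuting $(v,\sigma)$ only with elements $(0,\tau)$ yields $\sigma\in Z(S_d)$, which is not trivial for $d\le 2$; to force $\sigma=e$ in all cases you should also commute with $(w,e)$, which gives $\sigma\cdot w=w$ for all $w$ and hence $\sigma=e$---this is presumably the edge-case check you allude to at the end.
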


\begin{proof}
The fact that $G \subseteq W_{2d}$ and contains complex conjugation follows from \cite{Dodson1984}*{Prop 1.1}.  Transitivity follows from the fact that $\QQ(\pi)$ is a field, the claim that the center of $W_{2d}$ has order 2 follows from its presentation as a semidirect product, and that complex conjugation is central is \cite{Honda1967}*{Prop 1 (b)}.
\end{proof}

Note that we usually have $d=g$.  In this case, we conjecture that Lemma \ref{lem:CMGal} gives the only constraints on $G$:

\begin{conjecture}
Suppose that $G$ is a transitive subgroup of $W_{2g}$ containing complex conjugation.  Then there is a Weil $q$-number $\pi$ such that $\Gal(\QQ(\pi)^{\gal}/\QQ) \cong G$.
\end{conjecture}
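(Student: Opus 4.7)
The plan is to split the conjecture into two stages: (A) constructing a CM number field $K$ of degree $2g$ with $\Gal(K^{\gal}/\QQ) \cong G$ (as a subgroup of $W_{2g}$), and (B) exhibiting a Weil $q$-number in $K$ generating it over $\QQ$. For Stage A, let $N := G \cap C_2^g$ and $\bar G := G/N$, so that $\bar G$ is a transitive subgroup of $S_g$. First, realize $\bar G$ as $\Gal(L/\QQ)$ for some totally real Galois extension $L$, and extract a totally real subfield $K^+ := L^{\bar H}$ of degree $g$, where $\bar H$ is a point stabilizer in the given embedding $\bar G \hookrightarrow S_g$. Then construct $K = K^+(\sqrt{-\delta})$ for a totally positive $\delta \in K^+$ whose archimedean sign pattern, after Galois conjugation, realizes the cocycle prescribing $N \subseteq C_2^g$; such $\delta$ can be produced by weak approximation at the archimedean places combined with Kummer theory, with auxiliary Chebotarev constraints at a finite set of finite primes to block unintended Galois groups.

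For Stage B, equip $K$ with a primitive CM type $\Phi$ (choosing $K$ in Stage A so that such a $\Phi$ exists, e.g.\ by ensuring $K$ has no proper CM subfield). By Shimura's construction, there is a CM abelian variety $A/\bar\QQ$ of type $(K,\Phi)$ with $\End^0(A) \cong K$. Reducing $A$ at a prime of good reduction yields $\bar A / \Fq$ (for some $q = p^n$) whose Frobenius $\pi \in \OO_K$ satisfies $|\psi(\pi)| = \sqrt{q}$ for all embeddings $\psi \colon K \to \CC$; primitivity of $\Phi$ forces $\QQ(\pi) = K$, so $\Gal(\QQ(\pi)^{\gal}/\QQ) \cong G$ as required. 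A purely class-field-theoretic alternative is available: take a rational prime $p$ splitting completely in $K^{\gal}$, form $\mathfrak{a} := \mathfrak{p}_1 \cdots \mathfrak{p}_g$ from one half of the conjugate-paired primes above $p$ in $\OO_K$, raise to a power $n$ so that $\mathfrak{a}^n$ is principal and the resulting totally positive unit discrepancy in $\OO_{K^+}^\times$ is a norm from $\OO_K^\times$, and correct by this norm to produce $\pi$ with $\pi \bar\pi$ equal to a prime power; a further Chebotarev condition on $p$ ensures $\QQ(\pi) = K$.

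The main obstacle is Stage A, which is a targeted form of the inverse Galois problem. Even realizing $\bar G$ over $\QQ$ is open in general, though known for solvable groups (Shafarevich), $S_n$ and $A_n$ (classical), and many sporadic or Lie-type groups via rigidity. Arranging the CM quadratic extension to yield exactly the subgroup $N$ of $C_2^g$, rather than a strictly larger one, and simultaneously ensuring the existence of a primitive CM type on $K$, imposes additional combinatorial constraints that grow in complexity with $g$. Verifying the conjecture for small $g$ (as in the LMFDB regime that motivates this paper) is tractable by case analysis, but a uniform proof across all admissible $G$ and $g$ would subsume the inverse Galois problem for every transitive subgroup of $S_g$, and therefore lies beyond current techniques.
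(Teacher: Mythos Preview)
The statement you are attempting to prove is labeled as a \emph{conjecture} in the paper, not a theorem; the paper offers no proof and does not claim one. So there is no paper proof to compare your proposal against.

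Your proposal is not a proof either, and you say as much in your final paragraph: Stage~A (realizing $G$ as the Galois group of a CM field of degree $2g$ over $\QQ$) subsumes the inverse Galois problem for every transitive subgroup of $S_g$, since for any such $\bar G$ the full wreath subgroup $C_2^g \rtimes \bar G \subseteq W_{2g}$ is transitive and contains complex conjugation. This is genuinely open, and your write-up correctly identifies it as the obstruction rather than resolving it. What you have produced is a reasonable reduction sketch together with an honest assessment of why the conjecture remains a conjecture, not a proof of the statement.

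One minor point on Stage~B: even granting Stage~A, your primitivity-of-$\Phi$ argument for forcing $\QQ(\pi)=K$ is not quite airtight as stated. Reduction of a CM abelian variety of primitive type at a good prime does give a Frobenius in $\OO_K$, but showing that $\pi$ generates all of $K$ (rather than a proper CM subfield, or that $\pi/\sqrt{q}$ is not a root of unity) requires a bit more care about the choice of $p$; your class-field-theoretic alternative with a Chebotarev condition is closer to a complete argument here.
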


Note that the condition on containing complex conjugation is necessary: when $g=4$ there is a transitive subgroup of $W_{8}$ (with transitive label \texttt{8T14} and abstractly isomorphic to $S_4$) that does not contain complex conjugation and thus does not arise as the Galois group of a CM-field.

\subsection{Frobenius Angle Rank} \label{ssec:angle rank2}
For $A$ an abelian variety of dimension $g$ with $L$-polynomial $L(T) = \prod_{i=1}^{2g}(1-\alpha_i T)$, the \emph{angle rank} of $A$ is the quantity
 $$ \delta(A) = \dim_{\QQ}(\Span_\QQ(\lbrace \arg(\alpha_i): 1\leq i \leq 2g  \rbrace \cup \lbrace \pi \rbrace )) - 1 
 \in \{0,\dots,g\}.$$
Equivalently, $\delta(A)$ equals the number of multiplicatively independent elements in the set $\{\alpha_1,\dots,\alpha_{2g}, q^{1/2}\}$, which equals the rank of the
Frobenius torus associated to $A$ \cite{Chi1992}.
 
The angle rank detects multiplicative relations among the roots of $L$; these are closely related to exceptional Hodge classes on powers of $A$.\footnote{A \emph{Hodge class} is an \'{e}tale cohomology class that is invariant under the twisted action of Frobenius. A Hodge class is \emph{exceptional} if it is not in the ring (under cup product) generated by Hodge classes of weight two.
The Tate conjecture states that the space of Hodge classes is spanned by the images of algebraic cycles under the cycle class map to $l$-adic cohomology. 
This remains an open problem.} For example, by a theorem of Zarhin \cite[Theorem~3.4.3]{Zarhin1994}, $\delta(A) = g$ if and only if there are no exceptional Hodge classes on any power of $A$. On the other end, $\delta(A) = 0$ if and only if $A$ is supersingular; this follows from the fact, proven here in Example \ref{exa:supersingular}, that an abelian variety $A$ is supersingular if and only if $\pi_A = \zeta \sqrt{q}$ for $\zeta$ a root of unity.

Another useful observation is that if $A$ is simple and $0 < \delta(A) < g$, then the inclusion
$G \subseteq W_{2d}$ from Lemma~\ref{lem:CMGal} must be strict, because the action of $G$ preserves $\QQ$-linear relations between $\pi$ and the $\arg(\alpha_i)$. For a more refined version of this statement, and more about angle rank, see \cite{Dupuy2020b}.

\subsection{Bounds on Point Counts} \label{ssec:point_count_bounds}

Recall that if $A$ is an abelian variety of dimension $g$ over $\FF_q$ with characteristic polynomial
$(T- \alpha_1) \cdots (T- \alpha_{2g})$, then
\[
\#A(\FF_q) = (1-\alpha_1) \cdots (1 - \alpha_{2g}).
\]
The Weil bound implies
\[
(\sqrt{q}-1)^{2} \leq \#A(\FF_q)^{1/g} \leq (\sqrt{q}+1)^{2};
\]
this can be sharpened a bit to give 
\[
\lceil (\sqrt{q}-1)^2 \rceil \leq \#A(\FF_q)^{1/g} \leq \lfloor (\sqrt{q}+1)^2 \rfloor
\]
(see \cite[Th\'eor\`eme~1.1]{Aubry2012} or \cite[Corollary~2.2, Corollary~2.14]{Aubry2013}).
For general $A$, these bounds are best possible: for $g=1$, it follows from the Honda-Tate theorem (or an earlier theorem of Deuring) that $\#A(\FF_q)$ can take the values $q + 1 \pm \lfloor 2\sqrt{q} \rfloor$,
and the same is then true for arbitrary $g$ by taking powers.

In light of the fact (which we just used) that
\[
\#(A_1 \times_{\FF_q} A_2)(\FF_q) = \#A_1(\FF_q) \times \#A_2(\FF_q),
\]
it is natural to separately consider what happens when $A$ is required to be simple. In this case, if we exclude $A$ of low dimension, the Weil bounds can be sharpened further.
\begin{theorem}[{\cite[Theorem~2.5]{Kadets2019}}]   \label{T:Kadets general bound}
If $A$ is simple of dimension $g>1$, then
\[
\lfloor (\sqrt{q}-1)^2 \rfloor + 1 \leq \#A(\FF_q)^{1/g} \leq \lceil (\sqrt{q}+1)^2 \rceil - 1.
\]
\end{theorem}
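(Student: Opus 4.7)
The plan is to combine the Honda--Tate classification with the integrality of the Weil polynomial coefficients. Since $A$ is simple, Honda--Tate writes $P_A(T) = h_A(T)^e$ with $h_A \in \ZZ[T]$ irreducible. The case $\deg h_A = 1$ forces $\pi = \pm\sqrt q$ by Lemma~\ref{L:weil-q-number-characterization}, and the Brauer-invariant formula recalled in \S\ref{subsec:Honda-Tate} then gives $e_A = 2$, hence $g = 1$; this is ruled out by hypothesis. So $\deg h_A = 2d$ with $d \geq 1$ and $ed = g$, and $\pi$ is non-real in every embedding. Pairing each root $\alpha$ of $h_A$ with its complex conjugate $q/\alpha$ gives $\#A(\FF_q) = h_A(1)^e = h^+(q+1)^e$, where $h^+ \in \ZZ[T]$ is the (irreducible, degree $d$) minimal polynomial of $\beta := \pi + q/\pi$; its roots $\beta_j$ all lie in the open interval $(-2\sqrt q, 2\sqrt q)$. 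The theorem thus reduces to the inequality
\[
\lfloor(\sqrt q-1)^2\rfloor + 1 \leq h^+(q+1)^{1/d} \leq \lceil(\sqrt q+1)^2\rceil - 1.
\]

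Two simplifications are in order. First, the involution $\pi \mapsto -\pi$ preserves irreducibility of $h_A$ and all the Brauer invariants that define $e_A$, while sending $\beta_j$ to $-\beta_j$, so it exchanges the two inequalities; it therefore suffices to establish the upper bound. Second, when $q$ is not a square one checks the identity $\lceil(\sqrt q+1)^2\rceil - 1 = \lfloor(\sqrt q+1)^2\rfloor$, so the upper bound coincides with the sharpened Weil bound and is immediate from integrality of $\#A(\FF_q)$. The substantive case is therefore $q$ a square.

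When $q$ is a square and $d = 1$, one has $h^+(T) = T - \beta$ with $\beta \in \ZZ$ and $|\beta| < 2\sqrt q$ (strictly, from irreducibility of $h_A = T^2 - \beta T + q$). Integrality combined with this strict inequality gives $|\beta| \leq 2\sqrt q - 1$, and hence $h^+(q+1) = q+1-\beta \leq q+2\sqrt q = \lceil(\sqrt q+1)^2\rceil - 1$, as required.

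For $q$ a square and $d \geq 2$, the first tool I would apply is AM--GM to the positive reals $q+1-\beta_j$, yielding $h^+(q+1)^{1/d} \leq q+1-\bar\beta$ with $\bar\beta = (\sum_j \beta_j)/d$. Because $d\bar\beta \in \ZZ$ and $\bar\beta > -2\sqrt q$ strictly (the $\beta_j$ are distinct by irreducibility in characteristic zero and all strictly inside $(-2\sqrt q, 2\sqrt q)$), one gets $\bar\beta \geq -2\sqrt q + 1/d$ and so $h^+(q+1)^{1/d} \leq (\sqrt q+1)^2 - 1/d$, which falls short of the target $(\sqrt q+1)^2 - 1$ by $(1-1/d)$. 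Closing this last gap is the main obstacle; I would attempt to do so by supplementing AM--GM with the integrality of the discriminant $\Disc(h^+) = \prod_{i<j}(\beta_i - \beta_j)^2 \geq 1$, which forces a uniform lower bound on the variance of the $\beta_j$ and sharpens AM--GM. If this alone proves insufficient (as seems plausible for large $q$, where the variance correction is only $O(1/q)$), classical results of Schur--Siegel--Smyth type on the distribution of totally real algebraic integers in a fixed interval would likely be required to finish.
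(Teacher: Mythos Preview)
The paper does not supply its own proof of this theorem; it is quoted directly from \cite{Kadets2019}. So there is nothing in the paper to compare against, and I can only assess your argument on its own terms.

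Your reduction to the totally real minimal polynomial $h^+$ is correct, as is the observation that for $q$ not a perfect square the Kadets bounds coincide with the sharpened Weil bounds already quoted in \S\ref{ssec:point_count_bounds}. The case $q$ square, $d=1$ is also fine. Two issues remain.

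First, the symmetry reduction is not valid as stated. The involution $\pi \mapsto -\pi$ is a bijection on simple isogeny classes, but it does \emph{not} exchange the two inequalities: it sends $\prod_j(q+1-\beta_j)$ to $\prod_j(q+1+\beta_j)$, and since $\{-\beta_j\}$ ranges over exactly the same admissible root-systems as $\{\beta_j\}$, each of the two universally quantified bounds is \emph{individually invariant} under the involution rather than being swapped with the other. For $d=1$ this is harmless, since the lower bound follows by the identical one-line integer argument ($\beta \leq 2\sqrt q - 1$ gives $q+1-\beta \geq q+2-2\sqrt q$); but the blanket reduction ``it suffices to prove the upper bound'' is not justified.

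Second, and more seriously, the case $q$ a square with $d \geq 2$ is left open, as you yourself note. Your AM--GM step together with integrality of the trace gives only $h^+(q+1)^{1/d} \leq (\sqrt q+1)^2 - 1/d$, short of the target by $1-1/d$. The discriminant correction you propose contributes a term of order $O(1/q)$ to the bound, which cannot close an $O(1)$ gap; and invoking Schur--Siegel--Smyth is a gesture toward a literature, not an argument. This case is precisely the content of Kadets's theorem beyond the already-known sharpened Weil bound, and your write-up does not contain a proof of it.
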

Note that this is an improvement on the bound stated earlier when $q$ is a square.

For small $q$, one can make some additional improvements:
\begin{theorem}[{\cite[ Theorem~3.2]{Kadets2019}}] \label{T:Kadets small-q bound}
For values of $q$ listed below, one has the following lower and upper bounds on $\#A(\FF_q)^{1/g}$ for $g \geq 4$.
(More precisely, the bounds hold for all $g$ outside of an explicit finite set of exceptional choices of $A$, each of which has dimension at most $3$; see \cite[Table~2, Table~3]{Kadets2019}.)
\[
\begin{array}{c|ccccc}
q & 2 & 3 & 4 & 8 & 9 \\
\hline
\mbox{lower bound} & 1 & 1.359 & 2.2750 & 4.635 & 5.47 \\
\mbox{upper bound} & 4.0347 & 5.6333 & 7.3818 & 13.05 & 14.303 
\end{array}
\]
\end{theorem}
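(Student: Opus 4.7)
The plan is to reduce to a polynomial optimization problem governing the minimal polynomial of Frobenius and then exploit integrality of point counts over extensions. Since $A$ is simple, the Honda--Tate setup (\S\ref{subsec:Honda-Tate}) gives $P_A = h^e$ with $h$ irreducible of degree $2d = 2g/e$, so
\[
\#A(\FF_q)^{1/g} = \bigl(h(1)^e\bigr)^{1/g} = h(1)^{1/d}.
\]
Thus the task is to bound $h(1)^{1/d}$ from above and below over all irreducible monic Weil $q$-polynomials $h$ of degree $2d \leq 2g$, and then check that the listed numerical bounds hold for every $d \geq 2$ (with the finitely many genuinely low-dimensional exceptions tabulated separately).

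First I would collect the constraints that $h$ must satisfy: its roots $\alpha_1,\ldots,\alpha_{2d}$ come in conjugate pairs $\alpha_i, q/\alpha_i$ by Lemma~\ref{L:weil-q-number-characterization}, the $p$-adic valuations of its roots satisfy the Newton slope symmetry of \S\ref{subsec:Newton polygons}, and for every positive integer $n$ the quantity $\prod_i(1-\alpha_i^n)$ is the value at $1$ of the Weil polynomial of $A\otimes\FF_{q^n}$, hence a non-negative integer divisible by appropriate powers determined by $e$. The key extra ingredient for \emph{small} $q$ is that $h(m)$ is a non-zero integer for every small integer $m$, and more generally that the resultants $\mathrm{Res}(h(T), \Phi_n(T))$ with cyclotomic polynomials $\Phi_n$ are integers whose size and sign are forced by the $\FF_{q^n}$-point counts. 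I would turn these integrality statements into linear inequalities in the coefficients $a_1,\ldots,a_d$ of $h$ (viewed as a polynomial in $T + q/T$), exactly as in the ``explicit method'' of Serre that underlies all known refinements of Weil's bound over small base fields.

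Next I would apply the Serre-style averaging trick: pick auxiliary trigonometric polynomials $f(\theta) = c_0 + \sum_{k\geq 1} c_k \cos(k\theta)$ that are non-negative on $[0,2\pi]$ and have $f(0)$ large, then average $f$ over the Frobenius angles of the $\alpha_i/\sqrt{q}$. Summing $f$ over the roots produces a linear combination of $\log|h(1)|$, $\log|h(-1)|$, $\log|h(\zeta)|$, etc.\ together with coefficient data, and positivity of $f$ together with integrality of the resultants yields one-sided inequalities on $h(1)^{1/d}$. Choosing $f$ optimized for each tabulated $q \in \{2,3,4,8,9\}$ should reproduce the listed numerical bounds in the limit $d\to\infty$; for each finite $d$, the same choice of $f$ gives the bound up to an error that tends to zero with $d$, so it suffices to handle the finitely many $(d,q)$ with $d \leq D_0$ for some explicit threshold $D_0$ by a direct enumeration of admissible $h$.

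The main obstacle is twofold: first, producing trigonometric test functions $f$ whose extremal values match the tabulated thresholds $1, 1.359, 2.275, 4.635, 5.47$ (and the dual upper bounds) rather than the cruder Weil bound; this is a Chebyshev-type optimization that in practice is performed numerically but must be verified with an exact certificate. Second, the enumeration of exceptions: for each of $q \in \{2,3,4,8,9\}$ I would have to enumerate every irreducible Weil $q$-polynomial of degree $\leq 6$ (covering $g \leq 3$) and record precisely which of them violate the asymptotic bound, producing the exceptional tables referenced in \cite[Table~2, Table~3]{Kadets2019}. Once the threshold $D_0$ is chosen large enough that the optimization-based argument is valid for all $d > D_0$, the remaining finitely many cases reduce to a finite computer search, completing the proof.
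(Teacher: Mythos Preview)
The paper does not contain a proof of this theorem: it is quoted verbatim from \cite[Theorem~3.2]{Kadets2019} and used as a black box (for instance in the proof of Lemma~\ref{L:strict bound} and in Table~\ref{table:compare point counts to Kadets bounds}). So there is no ``paper's own proof'' to compare against.

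That said, your outline is broadly in the right spirit---Kadets's argument does proceed via Serre's explicit method, choosing auxiliary non-negative trigonometric polynomials and exploiting integrality of $\#A(\FF_{q^n})$---so your plan is pointed in the correct direction. But as written it is only a plan, not a proof: you have not exhibited the test functions that certify the specific constants $1.359$, $2.275$, $4.635$, etc., nor have you carried out the finite enumeration of exceptions, and the passage ``summing $f$ over the roots produces a linear combination of $\log|h(1)|$, $\log|h(-1)|$, \ldots'' is not quite how the explicit method works (one gets linear inequalities in the power sums $\sum_i \alpha_i^k$, which then bound $a_1$ and hence $\#A(\FF_q)$, rather than directly bounding logarithms of resultants). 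If you want to actually prove the theorem rather than cite it, you would need to reproduce the concrete optimizations and tables from \cite{Kadets2019}.
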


Looking at an asymptotic version of the question, one can see that there is not much room left for potential improvement in these bounds.
Building on \cite{Aubry2013}, Kadets has shown the following:
\begin{theorem}[{\cite[Proposition~1.4, Theorem~1.6]{Kadets2019}}] \label{T:Kadets asymptotic bound}
Define
\[
a(q) := \liminf_A \#A(\FF_q)^{1/g}, \qquad A(q) := \limsup_A \#A(\FF_q)^{1/g},
\]
where in both cases $A$ varies over simple abelian varieties of dimension $g$ over $\FF_q$.
Then
\[
\lfloor (\sqrt{q} - 1)^2 \rfloor + 1 \leq a(q) \leq \lceil (\sqrt{q}-1)^2 \rceil + 2, \qquad
\lfloor (\sqrt{q} + 1)^2 \rfloor - 2 - q^{-1} \leq A(q) \leq \lceil (\sqrt{q}+1)^2 \rceil -1.
\]
\end{theorem}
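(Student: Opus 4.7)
The four inequalities split into two groups. The outer bounds—the lower bound on $a(q)$ and the upper bound on $A(q)$—are immediate from Theorem~\ref{T:Kadets general bound} applied to each simple $A$ of dimension $g \geq 2$ and passed to $\liminf$/$\limsup$. The substantive content is the two inner bounds, which require exhibiting, for $q$ fixed and infinitely many $g$, simple abelian varieties of dimension $g$ whose normalized point counts $\#A(\FF_q)^{1/g}$ approach the claimed values from the inside.

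My plan is to parametrize Weil $q$-polynomials by their totally real ``trace polynomials.'' Any Weil $q$-polynomial $h(T) \in \ZZ[T]$ of degree $2g$ whose roots are not real can be written uniquely as $h(T) = T^g r(T + q/T)$, where $r(x) \in \ZZ[x]$ is monic of degree $g$ with all roots $\beta_1,\dots,\beta_g$ in the open interval $(-2\sqrt{q}, 2\sqrt{q})$. Conversely, any such $r$ produces a Weil $q$-polynomial $h$, and $h$ is $\QQ$-irreducible precisely when $r$ is, so by the Honda--Tate theorem such $h$ corresponds to a simple isogeny class. Pairing each root $\alpha_i$ of $h$ with $q/\alpha_i$, one computes
\[
\#A(\FF_q) = h(1) = \prod_{j=1}^g \bigl((1+q) - \beta_j\bigr) = r(1+q).
\]
The task therefore reduces to: for infinitely many $g$, construct an irreducible monic $r \in \ZZ[x]$ of degree $g$ with roots in $(-2\sqrt{q}, 2\sqrt{q})$ such that $r(1+q)^{1/g}$ is as close as possible to $(\sqrt{q}-1)^2$ from above (for the $a(q)$ upper bound) or to $(\sqrt{q}+1)^2$ from below (for the $A(q)$ lower bound).

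The construction strategy is to begin with a ``model'' whose roots cluster near $+2\sqrt{q}$ or $-2\sqrt{q}$—for instance a rescaled and translated Chebyshev polynomial, whose roots lie in $[-2\sqrt{q},2\sqrt{q}]$ with the extreme root at distance $O(g^{-2})$ from the boundary. Such a model satisfies $r(1+q)^{1/g} = (\sqrt{q} \mp 1)^2 + o(1)$, but it typically fails to have integer coefficients and to be irreducible. Both defects are repaired by perturbing only a bounded number of low-degree coefficients of $r$ by small integers: continuity of roots preserves the interval condition, while an Eisenstein-type criterion (or a Newton-polygon-at-a-carefully-chosen-prime argument) forces irreducibility. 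The perturbation changes $r(1+q)$ only by a factor bounded independently of $g$, contributing $o(1)$ to $r(1+q)^{1/g}$; the numerical slack $+2$ and $-2-q^{-1}$ in the statement absorbs both this cost and the $\lceil\cdot\rceil,\lfloor\cdot\rfloor$ rounding on the target.

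The main obstacle is the simultaneous control of irreducibility and boundary-hugging of roots. Each requirement is easy in isolation—Chebyshev-like families control the root interval, and standard prime-reduction or Eisenstein arguments provide irreducibility—but combining them so that the additive perturbation inflates $r(1+q)$ only by a uniformly bounded multiplicative factor is delicate. This is the technical heart of \cite{Kadets2019}, where the perturbation is supported on a fixed number of coefficients and the arithmetic criterion certifying irreducibility is engineered to be compatible with the Chebyshev model. Once such sequences $\{r_g\}$ are constructed for each of the two targets, passing to $g \to \infty$ yields the inner bounds and completes the proof.
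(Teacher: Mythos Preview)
The paper does not prove this theorem at all; it simply quotes it from \cite{Kadets2019} (Proposition~1.4 and Theorem~1.6 there) and moves on. There is therefore no ``paper's own proof'' to compare against. Your proposal is really a sketch of how the argument in \cite{Kadets2019} is organized rather than an independent proof, and you acknowledge as much when you write that the simultaneous control of irreducibility and root location ``is the technical heart of \cite{Kadets2019}.''

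As a sketch it is broadly accurate: the reduction via the trace polynomial $r(x)$ with $h(T)=T^g r(T+q/T)$ and the identity $\#A(\FF_q)=r(1+q)$ is exactly the right framework, and the outer bounds do follow immediately from Theorem~\ref{T:Kadets general bound}. But the substantive step---producing, for infinitely many $g$, an \emph{irreducible} integer polynomial $r$ of degree $g$ with all roots in $(-2\sqrt{q},2\sqrt{q})$ and with $r(1+q)^{1/g}$ within the stated tolerances---is not carried out here; you only gesture at a Chebyshev model plus an unspecified Eisenstein/Newton-polygon perturbation. That is precisely the part that requires work, and your proposal contains no argument for why a bounded perturbation of low-degree coefficients can simultaneously (i) preserve the root interval, (ii) force irreducibility, and (iii) change $r(1+q)$ by at most a factor that fits inside the numerical slack $+2$ and $-2-q^{-1}$. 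So as written this is an outline with the key lemma left as a black box pointing back to the very reference the theorem cites.
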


The cases where $\#A(\FF_q) = 1$ have been classified by 
Madan--Pal  (modulo Remark~\ref{R:Madan-Pal}).
\begin{theorem}[{\cite[Theorem~4]{Madan1977}}] \label{T:Madan-Pal}
Suppose that $A$ is simple of dimension $g$ and that $\#A(\FF_q) = 1$.
\begin{itemize}
\item
We must have $q \leq 4$. (This is easy: if $q \geq 5$, then $\sqrt{q}-1 > 1$ and the Weil bound implies the claim.)
\item
If $q = 3$ or $q=4$, then $g=1$. (For $q=4$, this is the equality case of the Weil bound. For $q=3$ a more careful argument is needed.)
\item
If $q=2$, then the characteristic polynomial of $A$ belongs to an explicit (infinite) list enumerated in \emph{op.\ cit.\ }
\end{itemize}
\end{theorem}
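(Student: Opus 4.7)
The plan is to reduce all three parts to computing $\#A(\FF_q) = h_A(1)^{e_A}$, where $h_A$ is the irreducible minimal polynomial of $\pi$ (by Honda--Tate, since $A$ is simple). Setting $\#A(\FF_q) = 1$ forces $h_A(1) \in \{\pm 1\}$, and we split into the real case $\pi = \pm\sqrt{q}$ (Lemma~\ref{L:weil-q-number-characterization}) and the CM case. In the CM case, pairing complex conjugates and writing $\beta = \pi + q/\pi$ for the totally real trace gives
\[
h_A(1) = \prod_{j=1}^{d}(1-\alpha_j)(1-\bar\alpha_j) = N_{\QQ(\beta)/\QQ}(q+1-\beta),
\]
with every conjugate $q+1-\beta_j \geq (\sqrt{q}-1)^2 > 0$, so $h_A(1) > 0$ and therefore $h_A(1) = 1$. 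For Part 1, if $q \geq 5$ then $(\sqrt{q}-1)^2 > 1$ makes both the CM case (every norm factor exceeds $1$) and the real case ($|1\pm\sqrt{q}|>1$) impossible, so $q \leq 4$.

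For Part 2 at $q = 4$, the CM case is excluded because each conjugate $5 - \beta_j > 1$ when $\beta_j < 4$. In the real case, $h_A = T - 2$ gives $h_A(1) = -1$, and Honda--Tate yields Brauer invariants $1/2$ at the real place and $1/2$ at the unique prime above $p = 2$, so $e_A = 2$ and hence $g = 1$. For $q = 3$, the real case $h_A = T^2 - 3$ gives $h_A(1) = -2$, and $(-2)^{e_A} \neq 1$. The CM case requires $N_{\QQ(\beta)/\QQ}(4-\beta) = 1$ with $\beta$ a totally real algebraic integer whose conjugates lie in $(-2\sqrt{3}, 2\sqrt{3})$. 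For $d = 1$ the only solution is $\beta = 3$, producing $h_A = T^2 - 3T + 3$ and $g = 1$. Ruling out $d \geq 2$ is the main obstacle: each conjugate $4 - \beta_j$ lies in the narrow window $(4-2\sqrt{3}, 4+2\sqrt{3}) \approx (0.54, 7.46)$, and one needs to show that no totally positive algebraic unit of degree $d \geq 2$ has all conjugates in this interval. This can be attacked via Smyth-type lower bounds on Mahler measures of totally positive algebraic integers, or by a direct finite enumeration after bounding $d$ in terms of the interval length.

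For Part 3 with $q = 2$, the analogous CM analysis gives $N_{\QQ(\beta)/\QQ}(3-\beta) = 1$ with conjugates of $3 - \beta$ in $(3-2\sqrt{2}, 3+2\sqrt{2}) \approx (0.17, 5.83)$; because this window now dips below $1$, multiplicative cancellation produces infinitely many solutions, accounting for the infinite list. The real case $h_A = T^2 - 2$ with $h_A(1) = -1$ contributes as well (requiring $e_A$ even, as determined by Honda--Tate at the ramified prime above $2$ and the two real places of $\QQ(\sqrt{2})$). The enumeration strategy is to bound the elementary symmetric functions of the $\beta_j$ via $|\beta_j| < 2\sqrt{2}$, yielding finitely many candidate Weil polynomials of each fixed degree, and then organize these into the explicit parametric families given in Madan--Pal. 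The main task here is bookkeeping to describe the infinite list, rather than a single obstruction argument as in the smaller cases.
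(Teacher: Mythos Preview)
The paper does not actually prove this theorem; it is quoted from \cite{Madan1977}, with only the parenthetical hints in the statement itself (the Weil bound for $q\ge 5$, the equality case for $q=4$) and Remark~\ref{R:Madan-Pal} on the $q=2$ case. Your reduction via $h_A(1)=N_{\QQ(\beta)/\QQ}(q+1-\beta)$ and your treatment of $q\ge 5$ and $q=4$ are correct and essentially what the parentheticals are pointing at.

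Where your sketch falls short is precisely where the paper warns that real work is needed. For $q=3$, $d\ge 2$, you correctly isolate the problem---totally positive units with all conjugates in $(4-2\sqrt3,\,4+2\sqrt3)$---but ``Smyth-type bounds or a direct finite enumeration after bounding $d$'' is a hope, not an argument: the interval has length $4\sqrt3>4$, so there is no immediate Kronecker/Schur--Siegel obstruction, and you have not explained how to bound $d$. For $q=2$ the situation is worse than ``bookkeeping.'' As the paper notes in Remark~\ref{R:Madan-Pal}, Madan--Pal's classification for $q=2$ was originally conditional on Robinson's conjecture that $\zeta_n^2+6\zeta_n+1$ is not a square in $\QQ(\zeta_n)$ for $n\ne 7,30$; this was only settled later by Robinson using methods of Cassels and Loxton. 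Your outline makes no contact with this ingredient, so the $q=2$ enumeration cannot be completed along the lines you describe without it. (A small side remark: in the $q=2$ real case $h_A=T^2-2$, Honda--Tate gives Brauer invariants $\tfrac12$ at each of the two real places of $\QQ(\sqrt2)$ and $0$ at the ramified prime above $2$, so $e_A=2$ and $g=2$; your parenthetical suggests the $p$-adic place contributes nontrivially, which it does not.)
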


\begin{remark} \label{R:Madan-Pal}
In \cite[Theorem~4]{Madan1977}, the case $q=2$ of Theorem~\ref{T:Madan-Pal} is asserted modulo a conjecture of
Robinson \cite{Robinson1964}: for $n$ a positive integer and $\zeta_n = e^{2\pi i /n}$, $\zeta_n^2 + 6 \zeta_n + 1$ is not a square in $\QQ(\zeta_n)$ unless $n=7$ or $n=30$. 
This was confirmed by Robinson \cite{Robinson1977} using a method of  Cassels \cite{Cassels1969} and Loxton \cite{Loxton1972}.
\end{remark}

\begin{remark}
A closely related question to Theorem~\ref{T:Madan-Pal}
is to classify simple abelian varieties $A$ 
of genus $g$ over $\FF_q$ for which $\#A(\FF_q) = \#A(\FF_{q^n})$ for some $n>1$
(meaning that $A$ acquires no new points over $\FF_{q^n}$).
In \cite[Corollary~3.4]{Kadets2019}, it is shown 
(using Theorem~\ref{T:Kadets general bound} and Theorem~\ref{T:Kadets small-q bound})
that this cannot occur for $n\geq 4$;
for $n=3$ it can only occur for $q = 2, g=1$;
and for $n=2$ it occurs precisely when $A$ is the quadratic isogeny twist (see Definition~\ref{D:isogeny twist}) of one of the cases listed in Theorem~\ref{T:Madan-Pal}.

This in turn implies a corresponding classification for curves over $\FF_q$ which acquire no new points over $\FF_{q^n}$, using the classification of function fields of class number one
(see Example~\ref{exa:class number one}).
\end{remark}

\section{Algorithms}
\label{S:algorithms}

We summarize some of the main algorithms used to compute data about abelian varieties in the LMFDB.
The code \cite{abvarfq} used is available at

\begin{center}
\url{https://github.com/LMFDB/abvar-fq}.
\end{center}

\subsection{Enumerating Weil Polynomials} \label{ssec:enumerating Weil polynomials}

We begin with the algorithm for enumerating Weil $q$-polynomials of fixed degree for a fixed prime power $q$.
This algorithm originated in \cite{Abbott2006} and was described in more detail in
\cite[Section~5]{Kedlaya2008} and \cite[Section~2]{Kedlaya2015}, to which the reader is referred for more details.

\begin{remark}\label{rem:iterator}
The code for enumerating Weil polynomials was incorporated into \texttt{Sage-9.1.beta0} \cite{sage} in January 2020.
\end{remark}

As noted earlier, identifying Weil $q$-polynomials of degree $2g$
\[
P(T) = T^{2g} + a_1 T^{2g-1} + \cdots + a_{2g}
\]
is equivalent to identifying polynomials of degree $g$ with integer coefficients
\[
Q(T) = T^g + b_1 T^{g-1} + \cdots + b_g
\]
whose roots are all real and belong to the interval $[-2\sqrt{q}, 2\sqrt{q}]$, via the relation
\[
P(T) = T^g Q(T + q/T).
\]
In particular, for any integer $i \in \{1,\dots,g\}$, $a_1,\dots,a_i$ uniquely determine $b_1,\dots,b_i$ and vice versa. The strategy is to catalog these polynomials recursively: given a putative choice of $a_1,\dots,a_{i-1}$
(or equivalently $b_1,\dots,b_{i-1}$),
use various techniques to impose necessary conditions on $a_{i}$ (or equivalently $b_{i}$), 
then recurse on the remaining options.
Some of these necessary conditions are as follows:
\begin{itemize}
\item
Rolle's theorem: the roots of $Q^{(g-i)}(T)$ must all be real and belong to $[-2\sqrt{q}, 2\sqrt{q}]$. This can be verified by computing a subresultant (Sturm-Habicht) sequence.
\item
Connectivity: the set of values of $b_i$ satisfying the previous condition is either a closed interval or empty.
\item
Power sums: the $i$-th power sum of the roots of $P(T)$ must have absolute value at most $q^{i/2}$. 
\item
Descartes's rule of signs: the polynomials $Q(T + 2\sqrt{q})$ and $(-1)^g Q(-2\sqrt{q} - T)$ have all roots real and nonpositive, so their coefficients must be nonnegative.
\item
Hamburger criterion: for $s_0, s_1, \dots$ the sequence of power sums of $Q$, the Hankel matrix
\[
\begin{pmatrix}
s_0 & s_1 & s_2 & \cdots \\
s_1 & s_2 & s_3 & \cdots \\
s_2 & s_3 & s_4 & \cdots \\
\vdots & \vdots & \vdots & \ddots
\end{pmatrix}
\]
is nonnegative definite (equivalently, its principal minors are nonnegative).
\item
Hausdorff criterion: for all $i,j$, the sum of $(2\sqrt{q}-x)^i (2\sqrt{q}+x)^j$ as $x$ varies over the roots of $Q$ is nonnegative.
\end{itemize}

A crucial but counterintuitive point is that while some subsets of these conditions together form sufficient constraints to identify Weil $q$-polynomials, no subset gives sufficient ``on-line'' criteria. That is, they cannot be applied directly to detect whether a given sequence $a_1,\dots,a_k$ can be extended to give the coefficients of some Weil polynomial; the conditions can only be checked once a full sequence has been generated. Therefore we impose ``more" conditions than are sufficient to cut down the search space as much as possible as early as possible in the computation.

One serious issue with this calculation is that, while it is trivial to check the output for false positives (e.g., Sage has a built-in function \texttt{is\_weil\_polynomial} to test whether a given integer polynomial is a Weil polynomial), it is rather difficult to check for false negatives other than by generating Weil polynomials using another method (e.g., by computing zeta functions of abelian varieties) and checking for their presence. One useful consistency check is to run the computation for $q=1$, where the answer is known: by Kronecker's theorem, the irreducible Weil $q$-polynomials for $q=1$ are precisely the cyclotomic polynomials.  One can also attempt to use the explicit descriptions of the spaces of Weil polynomials given for $g=3$ in \cite{Haloui2010}, for $g=4$ in \cite{Haloui2012}, and for $g=5$ in \cite{Sohn2013}.  Our results mostly matched in the $g=3$ case (we found a few extra non-simple examples), while the conditions in \cite{Haloui2012}*{Thm. 1.1} and \cite{Sohn2013}*{Thm. 2.1} were not actually satisfied by all of the Weil polynomials that we found.

\subsection{Point Counts}

As noted earlier, for $A$ an abelian variety of dimension $g$, we have $H^i(A) = \wedge^i H^1(A)$ for $i=1,\dots,2g$, so the zeta function of $A$ is determined completely by the $L$-polynomial $L(T)$. In particular, 
if we write $L(T) = \prod_{i=1}^{2g} (1 - \alpha_i T)$, then
\[
\#A(\Fq) = \prod_{i=1}^{2g} (1 - \alpha_i) = L(1).
\]
Similarly, for any positive integer $r$,
\[
\#A(\FF_{q^r}) = \prod_{i=1}^{2g} (1 - \alpha_i^r) = \Res(L(T), T^r - 1).
\]
In particular,
\[
\#A(\FF_{q^2}) = L(1) L(-1).
\]

\subsection{Curve Point Counts}
\label{subsec:curve counts}

If $A$ is isogenous to the Jacobian of a curve $C$, then the sequence $c_n := \#C(\FF_{q^n})$ satisfies
\[
\frac{L(T)}{(1-T)(1-qT)} = \exp\left( \sum_{n=1}^\infty \frac{c_n}{n} T^n \right).
\]
In fact, whether or not $A$ is isogenous to a Jacobian, the sequence $c_n$ consists of integers, which 
we report as the point counts of a ``virtual curve'' with Jacobian $A$.
If the $c_n$ violate any known constraints on the point counts of a curve of genus $g$, then
$A$ cannot be isogenous to a Jacobian; these include trivial constraints (such as $c_1 \geq 0$
and $c_{mn} \geq c_n$) and some less trivial ones (e.g., the Ihara bound\footnote{	Ihara's bound is the following: 
	$\#C(\Fq)/q \leq \frac{1}{2}( \sqrt{ 8q+1} -1 )$. \cite{Ihara1981} }).

\subsection{Base Change, Primitivity and Isogeny Twists} \label{ssec:base change}

If $A$ is an abelian variety over $\Fq$, then the Weil polynomials for $A$ and for the base change of $A$ to $\FF_{q^r}$ are related as follows.
\begin{proposition}
Suppose $P(T) = \prod_{i=1}^{2g} (T - \alpha_i)$ is the Weil polynomial associated to $A/\Fq$ by the Honda-Tate theorem.  Then the Weil polynomial associated to the base change of $A$ to $\FF_{q^r}$ is
\[
P_r(T) := \prod_{i=1}^{2g} (T - \alpha_i^r).
\]
\end{proposition}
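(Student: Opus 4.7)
The plan is to reduce the identity to a statement about how the Frobenius endomorphism behaves under base change, and then read off the characteristic polynomial of Frobenius on first cohomology.

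First I would recall that, by the conventions of \S\ref{subsec:Honda-Tate}, $P(T) = \det(T - F \mid H^1(A))$, where $F$ denotes the Frobenius endomorphism of $A/\Fq$, and that $\alpha_1,\ldots,\alpha_{2g}$ are its eigenvalues on $H^1(A)$. The key observation is that if $A' := A \otimes_{\Fq} \FF_{q^r}$ denotes the base change, then the $q^r$-power Frobenius $F'$ of $A'/\FF_{q^r}$ is the $r$-th iterate of $F$: composing the $q$-power Frobenius with itself $r$ times raises coordinates to the $q^r$-power. Since base change is compatible with Weil cohomology on smooth proper schemes (for $\ell$-adic cohomology this is proper base change; for rigid cohomology it is the corresponding base-change theorem), $H^1(A')$ is naturally identified with $H^1(A)$ in a way that sends $F'$ to $F^r$. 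Consequently the eigenvalues of $F'$ are exactly $\alpha_1^r, \ldots, \alpha_{2g}^r$, and therefore
\[
\det(T - F' \mid H^1(A')) = \prod_{i=1}^{2g}(T - \alpha_i^r) = P_r(T).
\]

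Next I would verify that $P_r(T) \in \ZZ[T]$ so that it genuinely makes sense as a Weil polynomial. The coefficients of $P_r$ are (up to sign) the elementary symmetric polynomials in $\alpha_1^r,\ldots,\alpha_{2g}^r$; these are symmetric polynomials in $\alpha_1,\ldots,\alpha_{2g}$ with integer coefficients, hence by the fundamental theorem of symmetric polynomials they are integer polynomials in the coefficients of $P(T)$, which are themselves integers. Alternatively, one can write $P_r(T) = \Res_S\bigl(P(S),\, S^r - T\bigr)$ (up to a sign/unit), which is manifestly in $\ZZ[T]$. That each $\alpha_i^r$ has absolute value $q^{r/2}$ under any complex embedding is immediate from $|\alpha_i| = q^{1/2}$, so $P_r$ is a Weil $q^r$-polynomial.

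Finally, by the Honda–Tate theorem as recorded in \S\ref{subsec:Honda-Tate}, the isogeny class of $A'$ over $\FF_{q^r}$ is determined by its characteristic polynomial of Frobenius, which we have just identified with $P_r(T)$. This gives the claimed formula. There is no real obstacle here; the only point that needs a little care is the identification of $F'$ with $F^r$, and this is essentially definitional once one adopts the convention that $F$ is the relative $q$-power Frobenius.
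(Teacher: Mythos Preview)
Your proof is correct and follows essentially the same approach as the paper: both identify the $q^r$-Frobenius on the base change as the $r$-th power of the $q$-Frobenius and read off the eigenvalues on $H^1$. You add the verification that $P_r(T)\in\ZZ[T]$ and is a Weil $q^r$-polynomial, which the paper omits, but the core argument is identical.
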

\begin{proof}
The roots of $P(T)$ are the eigenvalues of the action of the $q$-Frobenius on $H^1(A)$, and the $q^r$-Frobenius is just the $r$th power of the $q$-Frobenius.
\end{proof}

\begin{definition} \label{D:isogeny twist}
We say that $A$ is \emph{primitive} if $A$ is not isogenous to the base change of any abelian variety defined over a subfield of $\Fq$.  We say that two abelian varieties $A$ and $B$ are \emph{isogeny twists} if they become isogenous after some finite extension of $\Fq$.  The \emph{isogeny twist class} of $A$ is the set of isogeny twists of $A$.
\end{definition}

\begin{corollary} \label{cor:simple_twist}
If $A$ and $B$ are simple abelian varieties of dimension $g$ over $\Fq$ with associated Weil numbers $\alpha$ and $\beta$, then $A$ and $B$ are isogeny twists if and only if $\alpha = \zeta \beta$ for some root of unity $\zeta$.
\end{corollary}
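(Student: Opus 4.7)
The plan is to combine the Honda-Tate correspondence (two abelian varieties over $\FF_{q^r}$ are isogenous iff their Weil polynomials coincide) with the base change formula of the preceding proposition, which says that the Weil polynomial of $A_{\FF_{q^r}}$ has roots $\{\alpha_i^r\}$, where the $\alpha_i$ run with multiplicity over the roots of $P_A$. Throughout I use that the Weil number attached to a simple abelian variety is only well-defined up to Galois conjugation, so either side of the desired equivalence permits $\beta$ to be replaced by any of its conjugates.

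For the forward direction, suppose $A \sim B$ over $\FF_{q^r}$. Then $\prod_i (T - \alpha_i^r) = \prod_j (T - \beta_j^r)$, so $\alpha^r = \beta_j^r$ for some $j$. Since $B$ is simple, each $\beta_j$ is a Galois conjugate of $\beta$ and hence is itself a valid Weil number for $B$; after replacing $\beta$ by this $\beta_j$, the ratio $\zeta := \alpha/\beta$ is an $r$-th root of unity.

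For the reverse direction, suppose $\alpha = \zeta\beta$ with $\zeta^r = 1$, and set $\gamma := \alpha^r = \beta^r$. The goal is to show that $P_{A_{\FF_{q^r}}} = P_{B_{\FF_{q^r}}}$, after which Honda-Tate gives $A \sim B$ over $\FF_{q^r}$. Let $n_A := [\QQ(\alpha):\QQ]$ and $e_A$ the Honda-Tate exponent, so $n_A e_A = 2g$. Since $h_A$ is irreducible, Galois acts transitively on the distinct $\alpha_i$'s, so the distinct values of $\alpha_i^r$ form the Galois orbit of $\gamma$, of some size $m$. By transitivity, the map $\xi \mapsto \xi^r$ from the distinct $\alpha_i$'s onto that orbit has uniform fibers of size $n_A/m$, so each Galois conjugate of $\gamma$ appears in $P_{A_{\FF_{q^r}}}$ with multiplicity $(n_A/m) \cdot e_A = 2g/m$. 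The identical computation applies to $B$, with the \emph{same} value of $m$ (since the orbit of $\gamma$ is intrinsic to $\gamma$, not to how we reached it), producing the same polynomial.

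The only substantive step is the multiplicity accounting in the reverse direction, and I expect it to be routine once one records that the Galois action commutes with $\xi \mapsto \xi^r$; the forward direction is essentially immediate from Honda-Tate modulo the freedom to replace $\beta$ by a conjugate. No further ingredients beyond the two facts cited at the outset should be needed.
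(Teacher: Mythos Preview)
The paper states this corollary without proof, treating it as immediate from the preceding base-change proposition and Honda--Tate; your argument correctly supplies the details and follows exactly the route the paper implicitly has in mind.

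One small simplification worth noting in your reverse direction: once you observe that every root $\alpha_i^r$ of $P_{A_{\FF_{q^r}}}$ is a Galois conjugate of $\gamma = \alpha^r$, you do not need the explicit fiber-counting to get equal multiplicities. Since $P_{A_{\FF_{q^r}}}\in\ZZ[T]$ and its distinct roots form a single Galois orbit, it must be a power of the minimal polynomial $h_\gamma$, hence $P_{A_{\FF_{q^r}}} = h_\gamma^{2g/m}$; the same holds for $B$, giving equality. Your transitivity argument is of course also valid and yields the same conclusion.
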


We use various methods for computing $P_r(T)$.
The simplest is to just factor $P(T)$ approximately over $\CC$, raise each root to the $r$th power and then recognize the coefficients of the product as integers.
In order to make this approach rigorous, one needs to use ball arithmetic in $\CC$ and increase the precision if there are multiple integers within the error bounds for any coefficient.

The second approach is to symbolically express the coefficients of $P_r(T)$ as polynomials in the coefficients of $P(T)$ using Newton's identities to change basis between elementary symmetric polynomials and power sums.
For small values of $r$ the resulting transformation is not difficult to compute but for values of $r$ larger than about $10$ the memory footprint of the algorithm grows rapidly.
We therefore only use this approach for smooth values of $r$, where cached transformations can be repeatedly applied to compute the overall base change.

Finally, the base change can be computed using polynomials over the cyclotomic field $\QQ(\zeta_r)$.
In particular, we can use the identity
\[
P_r(T^r) = \prod_{i=0}^{r-1} P(\zeta_r^i T)
\]
to determine $P_r(T)$. An equivalent approach is to compute $P_r(T)$ as the resultant of $P(U)$ and
$U^r-T$. 

Since we are enumerating Weil polynomials for many $q$, we can use these methods for computing
base changes to easily determine which isogeny classes are primitive, and to find primitive models
for those which are not.  We simply compute all base changes from $\Fq$ to $\FF_{q^r}$ when both
prime powers are contained in the database.

Determining which abelian varieties are isogeny twists of each other is a little more difficult.
The condition in Corollary \ref{cor:simple_twist} is difficult to determine directly from Weil polynomials.
We can improve it slightly as follows.

\begin{proposition}
Two simple abelian varieties $A$ and $B$ are isogenous over $\overline{\FF}_q$ if and only if there exists a number field $K$ containing Galois conjugates $\pi_A'$ and $\pi_B'$ of $\pi_A$ and $\pi_B$ such that  $\pi_A' \OO_K = \pi_B '\OO_K$.  
\end{proposition}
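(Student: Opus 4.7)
The plan is to reduce the proposition to Corollary \ref{cor:simple_twist}, which characterizes when two simple abelian varieties are isogeny twists by the existence of a relation $\pi_A' = \zeta \pi_B'$ with $\zeta$ a root of unity (and $\pi_A', \pi_B'$ suitable Galois conjugates of the Weil numbers). The forward direction is essentially unpacking, while the converse requires Kronecker's theorem on algebraic integers of absolute value one.

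For the forward direction, suppose $A$ and $B$ are isogenous over $\overline{\FF}_q$. Then they are isogeny twists in the sense of Definition \ref{D:isogeny twist}, so by Corollary \ref{cor:simple_twist} there exist Galois conjugates $\pi_A', \pi_B'$ and a root of unity $\zeta$ with $\pi_A' = \zeta \pi_B'$. Let $K$ be any number field containing $\pi_A'$ and $\pi_B'$; then $\zeta = \pi_A'/\pi_B'$ automatically lies in $K$, and $\zeta \in \OO_K^\times$ because every root of unity is a unit. The equality $\pi_A' = \zeta \pi_B'$ then immediately yields $\pi_A'\OO_K = \pi_B'\OO_K$.

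For the converse, suppose $K$ contains Galois conjugates $\pi_A', \pi_B'$ with $\pi_A'\OO_K = \pi_B'\OO_K$. Then $u := \pi_A'/\pi_B' \in \OO_K^\times$; in particular $u$ is an algebraic integer. For any embedding $\sigma \colon K \hookrightarrow \CC$, the images $\sigma(\pi_A')$ and $\sigma(\pi_B')$ are themselves Galois conjugates of $\pi_A$ and $\pi_B$, hence are Weil $q$-numbers of weight $1$; by definition, $|\sigma(\pi_A')| = |\sigma(\pi_B')| = q^{1/2}$, so $|\sigma(u)| = 1$. Thus every archimedean absolute value of $u$ equals $1$.

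Kronecker's theorem now forces $u$ to be a root of unity. Writing $\pi_A' = u\pi_B'$ therefore exhibits the condition of Corollary \ref{cor:simple_twist}, so $A$ and $B$ are isogeny twists, which is to say isogenous over some finite extension, equivalently over $\overline{\FF}_q$. The only substantive input is Kronecker's theorem; the main thing to verify carefully is that the archimedean constraint holds at \emph{every} embedding of $K$, which follows automatically from the Weil condition applied to each Galois conjugate of $\pi_A$ and $\pi_B$.
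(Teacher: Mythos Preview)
Your proof is correct and follows essentially the same approach as the paper: set $u = \pi_A'/\pi_B'$, observe it is a unit of absolute value $1$ at every archimedean place because both numerator and denominator are Weil $q$-numbers, apply Kronecker's theorem to conclude $u$ is a root of unity, and then invoke Corollary~\ref{cor:simple_twist}. The paper's version additionally notes that $u$ has absolute value $1$ at all finite places (automatic for a unit), but this is not needed once one already knows $u$ is an algebraic integer; your formulation is slightly cleaner and you also spell out the forward direction, which the paper leaves implicit.
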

\begin{proof}
The requirement that $\pi_A'$ and $\pi_B'$ generate the same ideal is equivalent to $\pi_A' = u\pi_B'$ for some unit $u \in \OO_K$.  Then $u$ has absolute value $1$ at every finite place, and because $\lvert \pi_A'\rvert_v = \lvert \pi_B' \rvert_v = \sqrt{q}$ for every infinite place $v$, $u$ has absolute value $1$ at every infinite place as well.  Therefore $u$ is a root of unity by Kronecker and we may invoke Corollary \ref{cor:simple_twist}.
\end{proof}

This result is not sufficient for our purposes for two reasons.  First, it seems to require computations in Galois closures since we need to work with arbitrary conjugates of $\pi_A'$ and $\pi_B'$.  Second, as with Corollary \ref{cor:simple_twist}, it only applies when both $A$ and $B$ are simple,
yet it is possible for a simple abelian variety to be a isogeny twist of a non-simple one.

Instead, we break up the set of isogeny classes for each $g$ and $q$ into clusters based on invariants,
then use a pairwise test to further refine these clusters into isogeny twist classes.
Isogeny twists will have the same slopes and the same geometric endomorphism algebra,
whose computation we describe in the next section.
These two invariants are sufficient to divide up isogeny classes into clusters whose
size is already usually in the single digits, and is at most several hundred for
the values of $g$ and $q$ that we consider.
We then use the following result for each pair of isogeny classes in the cluster.
\begin{proposition}[{\cite{CMSV2019}*{Sec. 7.2}}] \label{prop:isodef}
Suppose $A$ and $B$ are abelian varieties over $\Fq$ with Weil polynomials $P(T)$ and $Q(T)$.
If there is an isogeny from $A$ to $B$ defined over $\FF_{q^r}$ then the cyclotomic polynomial $\Phi_r(T)$
divides the resultant $\Res_z(P(z), z^{2g} Q(T/z))$.
\end{proposition}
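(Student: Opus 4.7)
The plan is to compute the resultant explicitly as a product over pairs of Frobenius eigenvalues and then to use the preceding base-change proposition to exhibit a suitable root of unity among its roots.

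First I factor $P(z) = \prod_{i=1}^{2g}(z - \alpha_i)$ and $Q(T) = \prod_{j=1}^{2g}(T - \beta_j)$ over $\ZZ^{\alg}$, so that the $\alpha_i,\beta_j$ are the Frobenius eigenvalues on $H^1(A),H^1(B)$. A direct manipulation gives $z^{2g}Q(T/z) = \prod_{j}(T - \beta_j z)$, which is monic of degree $2g$ in $z$. The standard product-of-values formula for the resultant then yields
\[
\Res_z\bigl(P(z),\, z^{2g}Q(T/z)\bigr) \;=\; \prod_{i=1}^{2g} \alpha_i^{2g}\, Q(T/\alpha_i) \;=\; \prod_{i,j}(T - \alpha_i \beta_j),
\]
realizing the resultant as the polynomial in $T$ with root multiset $\{\alpha_i\beta_j\}_{i,j}$.

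Next I invoke the preceding base-change proposition: the Weil polynomial of the base change of $A$ to $\FF_{q^r}$ has roots $\alpha_i^r$, and likewise $\beta_j^r$ for $B$. An isogeny $A \to B$ over $\FF_{q^r}$ forces, by the injective half of the Honda--Tate theorem, the two $r$-th-power multisets to share a common eigenvalue; hence there exist $i, j$ with $\alpha_i^r = \beta_j^r$, equivalently $\alpha_i/\beta_j = \zeta$ for some $r$-th root of unity $\zeta$. Finally, using the Weil-polynomial symmetry $\beta \leftrightarrow q/\beta$ on the roots of $Q$, the product $\alpha_i \cdot (q/\beta_j) = q\zeta$ lies in the multiset $\{\alpha_i\beta_{j'}\}$, so $q\zeta$ is a root of the resultant. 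Because the resultant has integer coefficients and $\Phi_r(T)$ is (up to the normalization convention of the cited reference) the minimal polynomial of $\zeta$ over $\QQ$, this matching yields the desired divisibility $\Phi_r(T) \mid \Res_z(P(z), z^{2g}Q(T/z))$.

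The main obstacle is this last step: reconciling the natural output $q\zeta$ of the product formula with the cyclotomic polynomial $\Phi_r$ (whose roots have absolute value one) requires careful tracking of the normalization convention used in \cite{CMSV2019}*{Sec.\ 7.2}, for instance by passing to the $L$-polynomial convention or to the normalized eigenvalues $\alpha_i/\sqrt{q}$ before applying the matching. A secondary subtlety, in case $r$ is not the minimal index of an isogeny twist, is that $\zeta$ may only be a primitive $d$-th root of unity for some proper divisor $d \mid r$; one then recovers $\Phi_r(T) \mid \Res$ only after combining the divisibilities for each such $d$, using that $\Phi_r$ divides $T^r - 1 = \prod_{d \mid r}\Phi_d(T)$.
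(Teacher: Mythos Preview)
The paper gives no proof of this proposition, deferring to \cite{CMSV2019}, so there is nothing to compare against; but your analysis is essentially correct and in fact exposes two defects in the statement as transcribed. Your resultant computation $\Res_z(P(z), z^{2g}Q(T/z)) = \prod_{i,j}(T - \alpha_i\beta_j)$ is right, and since every $\alpha_i\beta_j$ has complex absolute value $q$, no cyclotomic polynomial can divide this for $q > 1$. The intended resultant should have roots $\beta_j/\alpha_i$, as produced for instance by $\Res_z(P(z), Q(Tz))$; only under that normalization does the paper's subsequent appeal to the Beukers--Smyth cyclotomic-part algorithm make sense. So the obstacle you flag is a genuine slip in the statement, not a gap in your reasoning.

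Your proposed fix for the second subtlety, however, does not work and cannot work: from $\Phi_d \mid \Res$ for a proper divisor $d \mid r$ one cannot conclude $\Phi_r \mid \Res$, since the factorization $T^r - 1 = \prod_{d \mid r}\Phi_d$ runs the wrong direction. A concrete counterexample (after repairing the normalization) is $A = B$ with $P(T) = T^2 - T + 2$ over $\FF_2$ and $r = 2$: the only root of unity among the ratios $\alpha_i/\alpha_j$ is $1$, so $\Phi_2 \nmid \Res$ even though $A$ is trivially isogenous to itself over $\FF_4$. What the paper's algorithm actually requires is only the weaker claim that if $A$ and $B$ become isogenous over some $\FF_{q^r}$ then $\Phi_d \mid \Res$ for some $d \mid r$; taking $m$ to be the lcm of all orders of cyclotomic factors then guarantees that the minimal such $r$ divides $m$. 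Your argument, once the normalization is corrected, proves exactly this weaker statement.
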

If we set $m$ to be the least common multiple of the orders of all cyclotomic polynomials dividing this resultant,
then we can determine whether $A$ and $B$ are isogeny twists by computing the base changes
$P_m(T)$ and $Q_m(T)$ and checking for equality. Note that to find the product of all cyclotomic polynomials dividing a given polynomial, it is not necessary to factor it fully; there is a more efficient
algorithm of Beukers--Smyth \cite[\S 2]{BS02} that finds cyclotomic factors using the Euclidean algorithm.
(In Sage, a polynomial over $\QQ$ has a method \texttt{cyclotomic\_part} implementing this algorithm.)

\subsection{Endomorphism Algebras}

The method for determining the Brauer invariants of the endomorphism algebra $\End^0(A/\Fq)$ is described in Section \ref{subsec:Honda-Tate}; given an irreducible Weil polynomial $h_A$ these invariants give the power $e_A$ such that $h_A^{e_A}$ is the characteristic polynomial of an isogeny class of abelian varieties over a finite field. We can also use Proposition \ref{prop:isodef} with $B=A$ to find all possible extension degrees
where the endomorphism algebra might change, and then compute the endomorphism algebra anew for each such base change. These two results allow us to, for an isogeny class defined over a finite field $\Fq$, give the endomorphism algebra of the base change of this isogeny class over any finite extension of $\Fq$.

We refer to the minimal extension over which all endomorphisms are defined as the \emph{endomorphism field};
one can search the database by degree of the endomorphism field, which we call the \emph{endomorphism degree}. 

For the convenience of the reader, we now explain how to describe the endomorphism algebra of a simple isogeny class $A$ over a finite field $\Fq$ given its Brauer invariants and its characteristic polynomial. This is enough to describe the endomorphism algebra of any isogeny class, as, if $A$ is isogenous to an abelian variety 
\begin{equation*}
A_1^{n_1} \times A_2^{n_2} \times \cdots \times A_r^{n_r}
\end{equation*}
where each $A_i$ is simple and $A_i$ is not isogenous to $A_j$ if $i \neq j$, then
\begin{equation*}
\End^0(A/\Fq) \cong M_{n_1}(\End(A_1/\Fq)) \times \cdots \times M_{n_r}(\End(A_r/\Fq)).
\end{equation*}
Assume now that $A$ is simple. We begin by determining the center of the endomorphism algebra of $A$: If the characteristic polynomial is $h_A^{e_A}$ for $h_A$ irreducible, then the center of $\End^0(A/\Fq)$ is the field $\QQ[x]/h_A$ generated by $\pi_A$. We then compute the degree of $\End^0(A/\Fq)$ over its center. Its square root is given by the order of the class of $\End^0(A/\Fq)$ in the Brauer group of its center, by \cite[p.~142]{Tate1966}. By the exact sequence
\begin{equation*}
0 \to \Br(F_A) \to \bigoplus_{v} \Br(F_{A,v}) \to \mathbb{Q}/\mathbb{Z} \to 0,
\end{equation*}
where $F_A=\QQ[x]/h_A$, this is the least common denominator of the Brauer invariants of  $\End^0(A/\Fq)$, or more simply put, $e_A$.

To complete our description of $\End^0(A/\Fq)$, we note that by \cite[Theorem~2 and its proof]{Tate1966}, if $\QQ[x]/h_A$ has a real place, then either $\QQ[x]/h_A = \QQ$ and $\End^0(A/\Fq)$ is the quaternion algebra over $\QQ$ that is ramified at $p$ and $\infty$, or $\QQ[x]/h_A = \QQ(\sqrt{p})$ and $\End^0(A/\Fq)$ is the quaternion algebra over $\QQ(\sqrt{p})$ ramified at both real places and nowhere else. 

Otherwise, $\QQ[x]/h_A$ is totally complex. If $e_A =1$, then $\End^0(A/\Fq)= \QQ[x]/h_A$. Otherwise, $\End^0(A/\Fq)$ is ramified only at places dividing $p$, and we identify the isomorphism class of $\End^0(A/\Fq)$  by its degree over $\QQ[x]/h_A$ and its Brauer invariants at the places above $p$ in $\QQ[x]/h_A$.

\subsection{Principal Polarizations}\label{S:principal-polarizations}

\emph{A priori}, the isogeny class associated to a Weil polynomial consists of unpolarized abelian varieties. 
In particular, to show how polarizations may vary within an isogeny class, we remind the reader of the following standard facts:
\begin{enumerate}
\item
For $g \geq 2$ and over an algebraically closed field, every abelian variety is isogenous to one that is principally polarizable. 
\item
For every field, there exist abelian varieties defined over that field which are not principally polarizable.
For example, over an algebraically closed field of characteristic zero, there is a simple proof that every principally polarizable abelian variety is isogenous to an abelian variety without a principal polarization.\footnote{
See \url{https://mathoverflow.net/questions/16992/non-principally-polarized-complex-abelian-varieties}.}
However, this last fact is not true in general, see Example \ref{E:isogenoustopp}.
\item
Polarization also behaves poorly under decomposition into isogeny factors: An abelian variety can be principally polarizable without all of its isogeny factors being principally polarizable.
See \cite[Example~13.8]{Howe1995} or \avlink{3.2.ac\_c\_ad}, which contains a Jacobian whose two-dimensional factor is not principally polarizable.
\end{enumerate}

The takeaway from these facts is that the quality of admitting a principal polarization is not an isogeny invariant.
Therefore in the database, when we say that an isogeny class is principally polarizable, we mean that there exists some abelian variety in the class which is principally polarizable.

Of the 2,945,722 isogeny classes in the database, there are only 3037 such that we can not determine if the class has a principal polarization.
These isogeny classes all occur in dimension greater than 3: 358 are in dimension 4, 515 are in dimension 5, and 2164 are in dimension 6. 

We now discuss methods for determining when an isogeny class is principally polarizable. 
This code was provided by Howe; the version we use is implemented in the \texttt{has\_principally\_polarizable} function \cite{abvarfq}.

For simplicity, the algorithm considers only the case of simple abelian varieties. We note that since an isogeny class is principally polarizable when all of its isogeny factors are, the database does indicate that  a nonsimple isogeny class is principally polarizable when all of its factors are. We remind the reader however that, as remarked above, the converse is not true, which does mean that certain non-simple cases are currently completely out of the reach of our implemented tests, and might be principally polarizable even though not all of their factors are. 

Our algorithm depends on the dimension of the isogeny class:
In the case $g=1$, every isogeny class is principally polarizable.
In the case $g=2$, we apply a result of \cite{Howe2008}, which tests for principal polarizations based on a condition on the coefficients of the Weil polynomial. 
These conditions are that an isogeny class is \emph{not} principally polarizable if and only if $a_1^2 - a_2 = q$, $a_2<0$, and every prime divisor of $a_2$ is $1 \mod 3$.

We now list the collection of tests that are implemented in the database in higher dimension. First, if $g$ is odd and $A$ is simple then its isogeny class is principally polarizable, which takes care of $g = 3$ and $g=5$.

In addition, in the ordinary case, we can completely determine whether an isogeny class is principally polarizable, using Corollary 11.4 and Proposition 11.5 of \cite{Howe1995}.
We note that the details of the proofs are given in \S14 of \cite{Howe1993} for readers who would like to see them.
The test relies on the fact that in the ordinary case, $N_{K/\QQ} (\pi - q/\pi)$ is always a square and we can consider its positive square root $N$.
If $q > 2$, there is a principally polarized variety in the isogeny class if and only if $N \equiv (\mbox{coeff of $T^g$}) \mod q$.
If $q = 2$, we have a similar congruence condition but for a power of $2$:
there is a principally polarized abelian variety in the isogeny class if and only if $N \equiv (\mbox{coeff of $T^g$}) \mod 2^2$. 

\begin{remark}
	In \cite{Howe1995}, Howe gives in fact algorithm for determining when an ordinary isogeny class (simple or not) contains a principally polarizable variety, but the non-simple case has not been implemented yet and we did not implement it for the database.
\end{remark}

\begin{remark}
Howe has explained to us that the congruence conditions come from wanting to determine if $N$, the positive square root of $N_{K/\QQ}(\pi+q/\pi)$, is the same as the square root specified by certain $p$-adic conditions. 
The result is then explained by the fact that the middle coefficient of the Weil polynomial is congruent modulo $q$ to the square root specified by the $p$-adic conditions. 
When $q>2$, the congruence is enough to compare the signs of the two square roots.
However, when $q=2$ we have $1 \equiv -1 \mod 2$, which explains the need to work modulo $4$. 
\end{remark}

Finally, in even dimension, we verify some conditions on the field $K$ generated by the Weil $q$-number $\pi$ to allow us to determine if certain isogeny classes are principally polarizable (this is \cite[Theorem~1.1]{Howe1996}). 
First, if $K$ is totally real then the class is principally polarizable. (Although if $g \geq 4$ and $A$ is simple, $K$ is always a CM field.) 
Otherwise, $K$ is a CM field. 
To set up the notation we will need, let $P(T)$ be the characteristic polynomial of an isogeny class of abelian varieties.
Write $P(T)=h(T)^e$ for $h$ irreducible; then $h$ is the minimal polynomial of a Weil $q$-number $\pi$. 
	Let $K = \QQ(\pi)$ and $K_+=\QQ(\pi+q/\pi)$ be the CM field and its totally real subfield, respectively. 
	Then we know that there is a principally polarizable abelian variety in the isogeny class associated to $P(T)$ if either  $K/K_+$ is ramified at a finite prime, or there is a prime of $K_+$ that divides $\pi - q/\pi$ and is inert in $K/K_+$. 
	We note that this second condition requires some work to test for, and we use the fact that a prime is inert in $K/K_+$ if and only if the prime ideal is equal to its complex conjugate to do so.

\subsection{Jacobian Testing}

Given a characteristic polynomial, to test if there exists a Jacobian with this characteristic polynomial, we apply six results from Howe and Lauter's Magma package \url{http://ewhowe.com/Magma/IsogenyClasses.magma}. 
(They were re-implemented for the LMFDB by Howe.)
This code accompanies the paper \cite{Howe2012}; see especially Section 6 of the article for a high-level overview of the software. 
In addition, the comments in the code are excellent, so rather than repeat an explanation of the tests, we refer the reader to these two excellent references. 
We note that currently the LMFDB does not implement positive Jacobian testing in genus 4 and higher. 

\subsection{Angle Rank} \label{ssec:angle rank3}
We have two algorithms to compute the angle rank of an isogeny class of abelian varieties: one that is numerical, and one that is algebraic and therefore yields a provably true answer. 
In the current version of the LMFDB, the angle rank $\delta(A)$ is computed numerically using lattice basis reduction.

This is done in the following way: We first approximate the roots $\alpha_i$ of the characteristic polynomial $P(T)$ numerically as pairs of floats (the real and imaginary parts of the number). 
We then pair each root $\alpha_i$ to its complex conjugate (which is also a root of $P(T)$) and retain only one number from each pair of complex conjugates, as we know that $\arg(\alpha_i)/2\pi$ and $\arg(\overline{\alpha}_i)/2\pi$ have a linear relation over $\mathbf{Q}$.
Following this, we compute $\arg(\alpha_i)/2\pi$ numerically (using the principal branch of the logarithm) for the $g$ remaining $\alpha_i$ to get values $t_1,\ldots, t_g$. 
Finally, to determine the dimension of the $\mathbf{Q}$-span of the appropriate set (the reader can go to \S \ref{ssec:angle rank2} for a reminder of the definition of angle rank), we apply an LLL algorithm with a certain precision to the tuple $[t_1,\ldots, t_g, 1]$ (\texttt{PARI}'s \texttt{lindep}).
Note that in these computations the branch of the logarithm chosen for the computations does not matter because we eventually take a $\QQ$-vector space span with $1$.

For the sake of completeness, we present below the algebraic algorithm yielding a provably true answer as well.
Roughly speaking, the algorithm relies on expressing the roots of $P(T)$ in a common generating set for some $S$-units of the splitting field of $P(T)$.
It would of course have been preferable to use this algorithm in the database instead, but at the time this computation was performed for the database (2015), the implementation of the software computing $S$-units in \texttt{Sage} was not fast enough to deploy on the full database. See Subsection~\ref{ssec:bottlenecks} for a possible workaround.

We now present the algebraic algorithm: Let $A$ be an abelian variety over $\FF_q$ where $q = p^a$.
Let $K$ be the splitting field of $P(T)=P_A(T) = \prod_{i=1}^{2g} (T-\alpha_i)$.
For the computation of the angle rank we consider 
$$ \Gamma = \langle \alpha_1, \alpha_2, \ldots,\alpha_{2g} \rangle, $$ 
the multiplicative subgroup of $K^{\times}$ generated by the roots of $P_A(T)$. 
In order to compute $\delta(A)$ we use the fact that 
$$\rk(\Gamma) = \delta(A).$$
The first two authors discuss the group $\Gamma$ in the sister paper \cite{Dupuy2020b}, where some explicit relations are computed. 
Let $S = \lbrace P \in \Spec(\OO_K): P \vert p \rbrace $ be the collection of primes of $K$ above $p$. 
The key observation is that $\Gamma$ is a subgroup of the group of $S$-units of $K$:
$$\Gamma \leq \OO_{K,S}^{\times}.$$
Using \texttt{Sage} we can then compute generators\footnote{The particular function we use was written by Cremona: \url{http://doc.sagemath.org/html/en/reference/number_fields/sage/rings/number_field/unit_group.html}.
We also remark that there is an additional quite large bottleneck in this algorithm due to the need to compute the splitting field of the characteristic polynomial.  } for $\OO_{K,S}^{\times}$:
 $$\OO_{K,S}^{\times} = \langle \zeta, u_1,\ldots, u_r \rangle$$
where $\zeta$ is a root of unity generating the torsion part of $\OO_{K,S}^{\times}$ and $u_1,\ldots,u_r$ are of infinite order, and attempt to compute the rank of $\Gamma$ in this basis. 
Before proceeding, however, we eliminate the torsion part of $\Gamma$, if any:
Let $m$ be the cardinality of the group of roots of unity in $K^{\times}$.
Then to compute the rank of $\Gamma$ it suffices in fact to compute the rank of $\Gamma^m = \langle \alpha_1^m,\ldots, \alpha_{2g}^m \rangle$, since  
\begin{equation*}
\rk(\Gamma) = \delta(A_{\FF_q}) = \delta(A_{\FF_{q^m}}) = \rk(\Gamma^m).
\end{equation*} 
Here, the equality of angle ranks follows from the fact that $\delta$ computes $\QQ$-linear relations, and $\arg(\alpha)/2\pi$ and $\arg(\beta)/2\pi$ have a $\QQ$-linear relation if and only if $\arg(\alpha^m)/2\pi$ and $\arg(\beta^m)/2\pi$ do.

We then write each $\alpha_i^m$ in terms of the generating elements $u_j$, and obtain a vector of exponents with integer coefficients. 
We then form a $r\times 2g$ matrix $Y$ whose columns are these vectors, and the rank of this matrix gives us our answer:
 $$ \rk(Y) = \delta(A_{\FF_q}) +1. $$
We note that the explicit relations in $\Gamma$ mentioned above are derived from the matrix $Y$;
several examples of these computations can be found in \cite{Dupuy2020b}.

\section{Statistics vs.\ Heuristics}
\label{S:statistics}

This database naturally invites investigation of the following motivating questions:
\begin{itemize}
\item
How does the number of objects in the database vary with $q$ and $g$?
\item
How about if we sort by the Galois group of $\QQ(\pi)$, the Newton polygon, or the angle rank?
\item
How is the number of points on the abelian variety distributed?
\item
What are the extreme values of the number of points?
\end{itemize}

In this section, we gather the data available  in the database and informing these questions, and compare it to the predictions given by heuristics.

\subsection{The Number of Isogeny Classes}
\label{subsec:number of isogeny classes}

In order to set bounding boxes for our data collection, we needed to estimate
$$N(g,q) = \mbox{number of isogeny classes of $g$-dimensional abelian varieties over $\Fq$}. $$ 
We initially chose the limits described in Table~\ref{T:number-of-classes} using an incorrect
estimate on the growth of $N(g,q)$: Believing that $N(g,q)$ grows like $q^{g(g+1)/2}$ (rather than the correct asymptotic $q^{g(g+1)/4}$; see below), for each $g$ we included data for $q$ with $q^{g(g+1)/2} \le 10^7$ to bound the number of isogeny classes per pair $(g,q)$.\footnote{We also included $(5, 3)$, where $q^{g(g+1)/2} \approx 1.4 \cdot 10^7$, and we only included $q$ up to $500$ for $g=1$, rather than $10^7$.}
We later extended the bounds in order to include more fields of small characteristic in dimension up to $3$,
and to make the range of included $q$ contiguous in dimension $3$.\footnote{Namely, we added powers of $2, 3, 5, 7$ up to $1024$ for $g \in \{1, 2\}$, and raised the bound for $g=3$ from $13$ to $25$.}

We now give a more careful analysis that better models the numerical data that we have observed for $N(g,q)$.
Since by Honda-Tate these isogeny classes are in bijection with characteristic polynomials, one reasonable heuristic is to count Weil $q$-polynomials of degree $2g$. This amounts to counting 
lattice points in the set of $(a_1,\dots,a_g) \in \RR^g$ for which the polynomial
\[
T^{g} + a_1 T^{g-1} + \cdots + a_g
\]
has all roots in the interval $[-2\sqrt{q}, 2\sqrt{q}]$; it is reasonable to approximate this count by the volume of the region. This volume has been computed by DiPippo and Howe  \cite[Proposition~2.2.1]{DiPippo1998} \footnote{We remark that the bounds used in \emph{op.\ cit}.\ are also related to the original iterator described in \cite{Kedlaya2008} which some reader might find enlightening.}: it equals
\begin{equation} \label{eq:DiPippoHowe}
\left( \frac{2^g}{g!} \prod_{i=1}^g \left( \frac{2i}{2i-1} \right)^{g+1-i}\right) q^{g(g+1)/4}.
\end{equation}
This turns out to be a good prediction in practice; see Table~\ref{T:volume-prediction} for some examples.

For the sake of completeness, we note two facts: First, according to \cite{DiPippo1998}, for $q$ large compared to $g$, the dominant contribution to the count is from ordinary abelian varieties. 
Secondly, to obtain the number of ordinary isogeny classes from the formula for the total number of isogeny classes, we simply multiply by a factor of $\varphi(q)/q$.

\begin{table}
\begin{center}
\begin{tabular}{cc|cccc}
& & \multicolumn{2}{c}{Ordinary} & \multicolumn{2}{c}{Arbitrary} \\
$g$ & $q$ & Predicted & Actual & Predicted & Actual  \\
\hline
3 & 25 & 284444 & 284740 & 355556 & 332166 \\
4 & 5 & 104025 & 105600 & 130032 & 132839 \\
5 & 3 & 170796 & 171180 & 256194 & 267465 \\
6 & 2 & 72362 & 74122 & 144724 & 164937\\
\end{tabular}	
\end{center}
\caption{\label{T:volume-prediction} Predicted versus actual values for the number of isogeny classes of ordinary/arbitrary abelian varieties of dimension $g$ over $\Fq$.}
\end{table}

Another way to evaluate our prediction for $N(g,q)$ is to make a least-squares fit for values $a,b$ such that
$\log N(g,q) \approx a \log q + b.$ The results are given in Table~\ref{T:least-square-values}.
The log-log plot of $q$ vs $N(g,q)$ is given in Figure~\ref{F:least-squares}.

\begin{table}
\begin{center}
\begin{tabular}{c|cccc}
& \multicolumn{2}{c}{$a$} & \multicolumn{2}{c}{$b$} \\
$g$ & Predicted & Actual & Predicted & Actual  \\
\hline 	1 & 0.5 & 0.4971 & 1.3863 & 1.3717 \\
2 & 1.5 & 1.4178 & 2.3671 &  2.5302 \\
3 & 3 &  2.9135 & 3.1248 &  3.2598\\
4 & 5 & 4.5452 & 3.7283 & 4.2707 \\
5 & 7.5 & 7.2188 & 4.2141 & 4.5660  \\
\end{tabular}	
\end{center}
\caption{\label{T:least-square-values} Predicted versus actual (least squares) values for the equation $\log N(g,q) = a \log(q) + b $.}
\end{table}

\begin{figure}
	\begin{center}
	\includegraphics[scale=0.75]{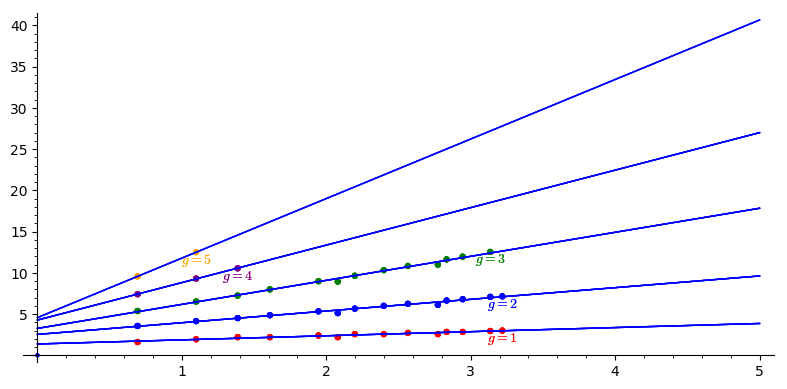}
	\end{center}
	\caption{\label{F:least-squares} Plots of $(\log q, \log N(g,q))$ for each $g$ and $q \le 27$ in the database. 
	The least square values of $\log N(g,q) = a \log q + b$ are given in Table~\ref{T:least-square-values}.  }
\end{figure}

While we do not yet have data for isomorphism classes (see \S\ref{ssec:isormorphism classes}),
it is worth noting that one can derive heuristics for counts of isomorphism classes within an isogeny class
using Oort's theory of \emph{isogeny leaves} in moduli spaces of abelian varieties \cite{Oort2009}.

\subsection{Galois Groups}  \label{subsec:Galois groups}

Let $P(T)$ be the characteristic polynomial of an isogeny class of abelian varieties, and $\QQ(\pi)^{\gal}$ be the splitting field of $P$ over $\mathbf{Q}$.
How can one expect $\Gal(\QQ(\pi)^{\gal}/\QQ)$ to be distributed as the isogeny class varies?
To answer this question, we need to explain Malle's conjecture \cites{Malle2002, Malle2004}
and the invariant $a(G)$ for a finite group $G \subseteq S_n$. 
The exposition given here follows the treatment in \cite{Malle2002} (see the introduction for example).
Given $g \in S_n$ we define 
 $$ \ind(g) = n - \# \mbox{number of cycles in the cycle decomposition of $g$.} $$
We then define 
 $$ a(G) = \frac{1}{\min \lbrace \ind(g): g \in G\setminus 1 \rbrace  }. $$

\begin{example}
	\begin{enumerate}
		\item $\ind( (12)(34) ) = 4 - 2 = 2$
		\item $a(S_4) = 1$ 
		\item $a(D_4) = 1$
		\item $a(A_4) = 1/2$
		\item $a(C_4) = 1/2$
	\end{enumerate}
\end{example}

Let $K$ be a number field, and $n$ be an integer.
We define the following counting functions for number fields $L$ of degree $n$ over $K$: 
\begin{gather*}
N_{K,n}(X) = \#\lbrace L : [L:K]=n, \vert \Disc(L/K) \vert \leq X\rbrace,\\
N_{K,n,G}(X) = \#\lbrace L : [L:K]=n, \vert \Disc(L/K) \vert \leq X, \Gal(L^{\gal}/K) = G \rbrace. 
\end{gather*}
 
First, we have Linnik's conjecture  (c.f. \emph{op.\ cit}.)
that there exists a constant $c_0=c_0(K,n)$ such that  
\begin{equation}\label{E:discriminant}
N_{K,n}(X) \sim c_0 X \mbox{ as $X\to \infty$ }. 
\end{equation}
Next, the (weak) Malle conjecture states that for all $\varepsilon>0$ there exist constants $c_1, c_{2,\varepsilon}$ such that 
\begin{equation}\label{E:malle}
  c_1 X^{a(G)} < N_{K,n,G}(X) < c_{2,\varepsilon} X^{a(G)+\varepsilon}. 
\end{equation}

Assuming these conjectures, given a base field $K$, we can describe how the proportion of extensions with Galois group $G$ should behave on a log-log scale:
\begin{lemma}
	Assuming equations \eqref{E:discriminant}  and \eqref{E:malle}  we have
	 $$ \lim_{X\to \infty} \frac{\log N_{K,G,n}(X)}{\log N_{K,n}(X)} = a(G). $$
\end{lemma}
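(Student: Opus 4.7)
The plan is to pass to logarithms on both sides of the two asymptotic inputs, and then sandwich the ratio between two quantities that both tend to $a(G)$.

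First I would handle the denominator. Since Linnik's conjecture \eqref{E:discriminant} gives $N_{K,n}(X) \sim c_0 X$, taking logarithms yields
\[
\log N_{K,n}(X) = \log X + \log c_0 + o(1), \qquad X \to \infty,
\]
so in particular $\log N_{K,n}(X) \sim \log X$. Next I would take logarithms in Malle's two-sided bound \eqref{E:malle}: for every $\varepsilon > 0$ there are constants $c_1, c_{2,\varepsilon} > 0$ with
\[
\log c_1 + a(G)\log X \;<\; \log N_{K,n,G}(X) \;<\; \log c_{2,\varepsilon} + (a(G)+\varepsilon)\log X.
\]

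The main step is then to divide these inequalities by $\log N_{K,n}(X) = \log X + O(1)$ and to let $X \to \infty$. The left-hand side tends to $a(G)$, so
\[
\liminf_{X\to\infty} \frac{\log N_{K,G,n}(X)}{\log N_{K,n}(X)} \geq a(G),
\]
while the right-hand side tends to $a(G) + \varepsilon$, giving
\[
\limsup_{X\to\infty} \frac{\log N_{K,G,n}(X)}{\log N_{K,n}(X)} \leq a(G) + \varepsilon.
\]
Since $\varepsilon > 0$ is arbitrary, the $\limsup$ is at most $a(G)$, and combined with the lower bound the limit exists and equals $a(G)$.

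There is no serious obstacle here; the only thing one must be a little careful about is that the constants $\log c_1$ and $\log c_{2,\varepsilon}$ are finite and absorbed into $O(1)$ when divided by $\log X$, and that the lower Malle bound forces $N_{K,n,G}(X) \to \infty$ so the logarithm is eventually positive and the inequalities may be divided without sign issues. The $\varepsilon$-trick at the end is the only conceptual point, and it is forced because Malle's conjecture, as stated in \eqref{E:malle}, does not give a matching lower bound of the form $X^{a(G)+\varepsilon}$.
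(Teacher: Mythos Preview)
Your proof is correct and follows essentially the same approach as the paper: take logarithms of both Linnik's asymptotic and Malle's two-sided bound, divide by $\log N_{K,n}(X) \sim \log X$, and use the $\varepsilon$-trick on the upper end. If anything, your use of $\liminf$ and $\limsup$ is slightly more careful than the paper, which writes $\lim$ in the intermediate steps before the existence of the limit has been established.
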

\begin{proof}
	Assume the Malle and Linnik conjectures. 
	We will prove the upper bound and omit the proof of the lower bound as it is similar and easier. 
	Since $N_{K,n}(X) \sim c_0X$, there exists some $\varepsilon_1(X)$, approaching zero as $X\to \infty$, such that $N_{K,n}(X) = c_0X(1+\varepsilon_1(X))$. 
	Applying Malle's conjecture, we then have, for all $\varepsilon>0$,
	\[
	\frac{\log N_{K,n,G}(X) }{\log N_{K,n}(X)} < \frac{ a(G)+\varepsilon + \frac{\log c_{2,\varepsilon}}{\log(X)} }{1+ \frac{\log c_0}{\log(X)}+\frac{\log(1+\varepsilon_1(X))}{\log(X)}} \to a(G) + \varepsilon \mbox{\ \ \  as $X\to \infty$. }
	\]
	This proves that for all $\varepsilon>0$
	\[
	\lim_{X \to \infty} \frac{\log N_{K,n,G}(X) }{\log N_{K,n}(X)} < a(G) + \varepsilon,
	\]
	and hence 
	\[
	\lim_{X \to \infty} \frac{\log N_{K,n,G}(X) }{\log N_{K,n}(X)} \leq a(G).
	\]
	A simpler bound using the lower end of Malle's conjecture proves 
	\[
	a(G) \le \lim_{X \to \infty} \frac{\log N_{K,n,G}(X) }{\log N_{K,n}(X)},
	\]
	which gives the result.
\end{proof}

Because of this result, we might expect that a group $G$ should appear as $\Gal(\QQ(\pi)^{\gal}/\QQ)$ with frequency such that the log-log ratios have a limiting value for each $G$ as $q \to \infty$.
While this does seem to be the case, the precise value of these ratios does not seem to coincide with Malle's constant $a(G)$, and seems to be more complicated.
Table~\ref{T:malle} shows them for $g=3$.
If we let $\widetilde{a}(G)$ denote these limits, to the authors it seems that
$$\widetilde{a}(\texttt{6T11}) = 1, \quad \widetilde{a}(\texttt{6T3}) = \widetilde{a}(\texttt{6T6}) \approx 3/5, \quad 1/10\leq \widetilde{a}(\texttt{6T1}) \leq 3/10, $$
and we have no theoretical explanation for why this is the case.
We do note that there seems to be some partial progress on these type of conditional Malle distributions in \cite{BarqueroSanchez2017}.

\begin{remark}
Alternatively, it may be the case that these distributions fit closer to a ``polynomials-in-a-box'' distribution. 
The study of the Galois group of a random polynomial goes back to van der Waerden \cite{Waerden1936}, and an excellent summary (and further developments) is given in \cite{Zywina2010}.
One can prove (see \cite{Bary-Soroker2020}) that the number of monic polynomials of degree $d$ which are irreducible and have coefficients in $[-L/2,L/2]$ is asymptotic to $L^{d}$. 
Let's call this number $B_d(L)$.
If we let $B_{d,G}(L)$ be the number of monic irreducible polynomials with coefficients in $[-L/2,L/2]$ with Galois group $G$, then in analogy with Malle one could naively guess that for every group $G$ transitive on $d$ elements,  there exists some $\alpha(G)\leq 1$ such that for every $\varepsilon>0$, there exist constants $c_1$ and $c_{2,\varepsilon}$ and $R>0$ such that for every $L>R$ one has 
$$c_1 L^{d\alpha(G)}  <  B_{d,G}(L)  < c_{2,\varepsilon} L^{d(\alpha(G)+\varepsilon)}.$$
If such constants exist, is it the case that $\widetilde{a}(G) = \alpha(G)$? 
\end{remark}

\begin{table}
	\begin{center}
		\begin{tabular}{l|llllllllllllll}
			Group$\backslash q$ & $2$ & $3$ & $4$ & $5$ & $7$ & $8$ & $9$ & $11$ & $13$  \\ \hline
			6T1 & $0.31636$ & $0.11844$ & $0.26715$ & $-\infty$ & $0.30254$ & $0.23744$ & $0.14698$ & $0.25954$ & $0.28031$ \\
			2T1 & $-\infty$ & $-\infty$ & $-\infty$ & $-\infty$ & $-\infty$ & $0.079146$ & $-\infty$ & $-\infty$ & $-\infty$ \\
			6T3 & $0.60225$ & $0.59221$ & $0.66064$ & $0.58783$ & $0.60854$ & $0.54666$ & $0.63575$ & $0.57555$ & $0.60162$ \\
			6T11 & $0.88343$ & $0.96285$ & $0.97157$ & $0.98768$ & $0.99256$ & $0.99380$ & $0.99424$ & $0.99707$ & $0.99743$ \\
			6T6 & $0.60225$ & $0.60257$ & $0.58913$ & $0.59649$ & $0.58877$ & $0.60836$ & $0.58708$ & $0.59022$ & $0.59007$ \\
		\end{tabular}
	\end{center}
\caption{\label{T:malle}For $g=3$ we have plotted $\log N(g,q,G)/\log N(g,q)$. 
Observe the phenomena for $\mathtt{6T1}$ at powers of $3$. 
Also, note that none of these can possibly be $a(G)$ for some $G$, which must be of the form $1/n$ for $1\leq n \leq 2g=6$ (as these number are 1.00, 0.50, 0.33, 0.25, 0.20, 0.16).}
\end{table}

\subsection{Newton Polygons Data and \texorpdfstring{$p$}{p}-rank Strata}

As discussed in \S\ref{subsec:Newton polygons}, for any given positive integer $d$, the coarse moduli space
$\A_{g,d}$ of $g$-dimensional abelian varieties equipped with a polarization of degree $d^2$ admits a locally closed stratification by Newton polygons, in which the stratum corresponding to an individual polygon is equidimensional of codimension equal to the elevation of the polygon.
A reasonable guess is that for any given eligible Newton polygon $P$ in dimension $g$,
the proportion of isogeny classes of abelian varieties over $\FF_q$ with Newton polygons lying on or above $P$
is $cq^{-e}$ where $e$ is the elevation of $P$ and $c$ is the number of geometrically irreducible 
components of the stratum over $\FF_q$. By Theorem~\ref{T:NP stratification}, $c=1$ unless $P$ is the supersingular stratum; 
in the supersingular case the stratum is reducible and not all irreducible components may be defined over $\FF_q$, but it is guaranteed that $c > 0$ \cite{Yu2017}. 
One can even give an explicit formula for $c$ when $q=p$ \cite{Ibukiyama2018}*{Thm. 4.6}.

For example, in dimension 3 the Newton polygons are linearly ordered (see Figure~\ref{F:newton}).
In Figure~\ref{F:newton-numbers}, we give a plot of 
\[
\log_q\left(\frac{N(3, q, P)}{N(3, q)}\right)
\]
for each of the five possible Newton polygons, where $N(g, q, P)$ is the number of isogeny classes with Newton polygon on or above $P$, and $N(g, q)$ is the total number of isogeny classes.
Note that the values for $q$ prime agree quite well with the discussion above, while for non-prime $q$ there are more isogeny classes in the smaller strata than expected.
We have no explanation for this behavior. 
Moreover, the supersingular stratum lies consistently above $-4$ and $\log_q\left(N(3, q, P) / N(3, q)\right)$ is increasing with $q$, suggesting that the number of geometrically irreducible components increases as some nonzero power of $q$.

\begin{figure}[!htbp]
\begin{center}
	\includegraphics[scale=0.5]{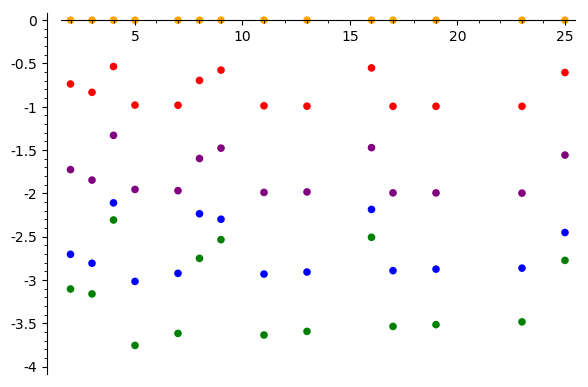}
\end{center}
\caption{$\log_q(N(3, q, P) / N(3, q))$ as a function of $q$, where $N(g, q, P)$ is the number of isogeny classes with Newton polygon on or above $P$.  Colors indicate $P$, with orange the ordinary stratum and green the supersingular stratum. \label{F:newton-numbers}}
\end{figure}
	
\begin{remark}
In dimension 4, every Newton polygon stratum in $\A_{4,1}$ occurs for the Jacobian of some curve. For strata of codimension at least $2$ this is implied by \cite{Shankar2018}*{Proposition 2.4}; this leaves only the ordinary stratum which includes the generic Jacobian, and the almost ordinary stratum which includes the generic
non-ordinary Jacobian.
\end{remark}

\begin{problem} \label{problem: Pries p-ranks}
The following question is due to Pries (see \url{http://aimpl.org/cohomabelian/}):
Let $p$ be a prime number. For a fixed dimension $g$ and a fixed $p$-rank $f$, what is the smallest field of definition of a simple abelian variety over $\overline{\FF}_p$ with dimension $g$ and $p$-rank $f$?
\end{problem}

For each pair $(g,q)$ for which the LMFDB contains data, we have
verified that each of $0,\dots,g$ occurs as the $p$-rank of at least one simple abelian variety of dimension $g$ over $\FF_q$.

\subsection{Frobenius Angle Rank} \label{subsec:angle rank}
Table~\ref{T:the-dip} shows for each $g$ and angle rank $\delta$ with $0\leq \delta \leq g$ which $p$-ranks do not appear in our database. 
\begin{longtable}{lll}
	Dimension & Angle rank & Forbidden $p$-ranks \\ \hline
	\endfirsthead
	Dimension & Angle rank & Forbidden $p$-ranks \\ \hline
	\endhead
	 1 & 0 & 1  \\
	    & 1 & 0  \\[0.2cm]
	 2 & 0 & 1,2  \\
	    & 1 & 0,1  \\
	    & 2 & 0  \\[0.2cm]
	 3 & 0 & 1,2,3  \\
	    & 1 & 1,2  \\
	    & 2 & 0,1,3 \\
	    & 3 &  none \\[0.2cm]
	 4  & 0 & 1,2,3,4  \\
	    & 1 & 1,2,3  \\
	    & 2 & 1,3  \\
	    & 3 & 3  \\
	    & 4 &  none \\[0.2cm]
	 5  & 0 & 0,1,2,3,4,5 \\
	    & 1 & 1,2,3,4 \\
	    & 2 &0,1,2,3,4,5  \\
	     & 3 & 0,1,2,3,4,5\\
	    & 4 & 1,3,5  \\
	     & 5 & none \\[0.2cm]
	   6 & 0 & 1,2,3,4,5,6  \\
	     & 1 & 1,2,3,4,5  \\
	     & 2 & 1,5 \\
	     & 3 & 1,3,5 \\
	     & 4 & 0,1,3,5,6 \\
	     & 5 & 5  \\
	     & 6 & none  \\
	\caption{Excluded $p$-ranks for simple abelian varieties. \label{T:the-dip} }\\
\end{longtable}
From Table \ref{T:the-dip} some simple patterns emerge.
\begin{itemize}
\item 
Angle rank 0 implies $p$-rank 0. This is known in general because angle rank 0 is equivalent to supersingularity.
\item
Up to $g=4$, every positive angle rank can occur for ordinary abelian varieties (i.e., for $p$-rank $g$).
However, please note that for $g=5$ we only cover $q=2,3$, and for $g=6$ we only cover $q=2$.
\item
If $\delta = 1$ then the $p$-rank is 0 or $g$. 
\item
For $g \geq 3$, if $\delta = 2$ then the $p$-rank cannot be $1$.
\item
The $p$-rank $g-1$ only occurs when $\delta = g-1$ or $\delta = g$. This follows from a theorem of 
Lenstra--Zarhin \cite[Theorem~5.7]{Lenstra1993}, who also show that the case $\delta = g-1$ cannot occur if $g$ is even \cite[Theorem~5.8]{Lenstra1993}.
\end{itemize}
For  discussions in this direction we refer the reader to the forthcoming paper \cite{Dupuy2020b}. 

\subsection{Endomorphism Algebras} \label{subsec:endomorphism algebras}

We motivate this section with the following open problem:

\begin{problem}[{\cite[Open Problem 22.6]{Oort2008}}]
	For each $g>0$ and $p$ determine the possible endomorphism algebras occurring in that characteristic. 
\end{problem}

As a first step we focus on simple abelian varieties, since the general case is expressible
in terms of matrix algebras over the endomorphism algebras of the simple constituents.
We can then break this problem up into two parts: understanding the center of the endomorphism algebra,
and understanding the endomorphism algebra as a division algebra over its center.

The center of the endomorphism algebra of an abelian variety $A$ is just
the number field $\QQ(\pi)$ defined by $h_A(T)$, where $P_A(T) = h_A(T)^e$.
In order to analyze the possible centers that can arise, we look at statistics on the discriminant $\Delta$ of $\QQ(\pi)$.
It is useful to normalize the discriminant in two ways.
First, we consider the root discriminant $\rd = \lvert \Delta \rvert^{1/n}$ instead, where $n = [\QQ(\pi):\QQ]$.
The root discriminant is often more useful when considering number fields of different degrees.
Second, we use the following result to rescale the root discriminant in a way that allows comparison
across different values of $q$.

\begin{theorem}
Let $A$ be a simple abelian variety of dimension $g$ over $\Fq$ with associated Weil number $\pi$.
Then the root discriminant $\rd$ of $\QQ(\pi)$ is bounded by
\[
\rd \le 2gq^{g/2}.
\]
\end{theorem}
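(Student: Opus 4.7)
The plan is to bound the field discriminant $\Delta_{\QQ(\pi)/\QQ}$ via the discriminant of the minimal polynomial of $\pi$, and then exploit the CM structure from Lemma~\ref{L:weil-q-number-characterization} to sharpen the exponent of $q$ from the naive $q^{g-1/2}$ down to $q^{g/2}$.

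First I would set $n = [\QQ(\pi):\QQ]$ and observe from Honda--Tate (\S\ref{subsec:Honda-Tate}) that $ne_A = 2g$, so $n \leq 2g$. Let $h_A \in \ZZ[T]$ denote the minimal polynomial of $\pi$, with roots $\pi_1,\ldots,\pi_n$ all of absolute value $\sqrt{q}$. Since $\ZZ[\pi] \subseteq \OO_{\QQ(\pi)}$, one gets $|\Delta_{\QQ(\pi)/\QQ}| \leq |\mathrm{disc}(h_A)| = \prod_{i<j}(\pi_i-\pi_j)^2$. Applying Hadamard's inequality to the Vandermonde matrix $V = (\pi_i^{j-1})$ with column norms $\|V_{\cdot,j}\| = \sqrt{n}\,q^{(j-1)/2}$ would yield $|\mathrm{disc}(h_A)| \leq n^n q^{n(n-1)/2}$ and hence $\rd \leq n\,q^{(n-1)/2} \leq 2g\,q^{g-1/2}$, which falls short of the claim when $g \geq 2$.

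To recover the target exponent, I would invoke the CM structure. If $\pi = \pm\sqrt{q}$ the bound is trivial, so assume $\QQ(\pi)$ is CM with totally real subfield $K_+ = \QQ(\beta)$ for $\beta = \pi + q/\pi$; then $\QQ(\pi) = K_+(\sqrt{\beta^2 - 4q})$. I would then use the tower formula
\[
|\Delta_{\QQ(\pi)/\QQ}| = |N_{K_+/\QQ}(\mathfrak{d}_{\QQ(\pi)/K_+})| \cdot |\Delta_{K_+/\QQ}|^2,
\]
controlling the relative different by $\mathfrak{d}_{\QQ(\pi)/K_+} \mid (\beta^2-4q)\OO_{K_+}$ so that $|N_{K_+/\QQ}(\mathfrak{d}_{\QQ(\pi)/K_+})| \leq \prod_i(4q-\beta_i^2) \leq (4q)^{n/2}$, and bounding $|\Delta_{K_+/\QQ}|$ by Hadamard applied to the Vandermonde of the totally real conjugates $\beta_i$, which satisfy $|\beta_i| \leq 2\sqrt{q}$. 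Multiplying these two estimates and extracting the $n$-th root, the $q$-exponents collapse to $g^2$ in the absolute discriminant, producing a bound on $\rd$ of the form $c_g\,q^{g/2}$.

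The main obstacle will be sharpening the numerical constant $c_g$ to exactly $2g$. A direct combination of Hadamard's inequality on $K_+$ with the trivial bound $|4q - \beta_i^2| \leq 4q$ produces a constant $c_g$ growing exponentially in $g$ (roughly $g\cdot 2^g$). To bring this down to $2g$, I expect one must exploit integrality constraints on the $\beta_i$: because the $\beta_i$ are algebraic integers, they cannot all cluster near the endpoints $\pm 2\sqrt{q}$ while simultaneously keeping $|\beta_i - \beta_j|$ near $4\sqrt{q}$, so the product $\prod_i (4q - \beta_i^2)$ and the polynomial discriminant $\prod_{i<j}(\beta_i - \beta_j)^2$ cannot both be near their respective trivial maxima. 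Quantifying this trade-off sharply, or equivalently carrying out a Vandermonde estimate adapted to the CM pairing of roots, is the technical heart of the proof.
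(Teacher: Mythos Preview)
Your overall strategy matches the paper's up to the point where you invoke the CM tower, and you correctly diagnose that a naive Hadamard bound on $h_A$ only yields $\rd \leq 2g\,q^{g-1/2}$. But the final step---getting the constant down to $2g$---is a genuine gap, and the mechanism you propose (an integrality trade-off preventing the $\beta_i$ from simultaneously maximizing both $\prod_i(4q-\beta_i^2)$ and $\prod_{i<j}(\beta_i-\beta_j)^2$) is not how the paper closes it, nor is it clear that such a trade-off would give exactly the right constant.

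The paper's key idea is to avoid bounding the two tower factors separately with archimedean estimates. Instead, it proves a \emph{divisibility}: writing $\Delta'$ for the discriminant of $P_A$ and $\Delta$ for that of $\QQ(\pi)$, one shows $q^{g(g-1)} \mid \Delta'/\Delta$. This comes from an exact algebraic identity
\[
\frac{\Delta'}{(\Delta'_0)^2} = q^{g(g-1)} \prod_{i=1}^g (\alpha_i - q/\alpha_i)^2,
\]
where $\Delta'_0$ is the polynomial discriminant of the minimal polynomial of $\beta$, obtained by a direct Vandermonde computation pairing each root $\alpha_i$ with $q/\alpha_i$. Combined with the fact that $\Delta/\Delta_0^2$ divides $\prod_i(\alpha_i - q/\alpha_i)^2$ (since the relative different of $\QQ(\pi)/\QQ(\beta)$ divides $\beta^2-4q$), and that $\Delta_0 \mid \Delta'_0$, one gets $q^{g(g-1)} \mid \Delta'/\Delta$. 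Now apply the Hadamard/Mordell bound \emph{once}, to the full degree-$2g$ polynomial $P_A$, obtaining $|\Delta'| \leq (2g)^{2g} q^{g(2g-1)}$; dividing by $q^{g(g-1)}$ gives $|\Delta| \leq (2g)^{2g} q^{g^2}$ and hence $\rd \leq 2g\,q^{g/2}$. The point is that the sharp constant $(2g)^{2g}$ is already present in the bound on $\Delta'$, and the CM structure is used purely to extract a $p$-adic factor $q^{g(g-1)}$ from $\Delta'/\Delta$, not to re-bound anything archimedean-ly.
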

\begin{proof}
By \cite{Mordell1960}*{Eq. 17}, the maximum possible \emph{polynomial} discriminant for a polynomial
of degree $2g$ whose roots all have absolute value $\sqrt{q}$ is $(2g)^{2g} q^{g(2g-1)}$. 
Applied to the polynomial $h_A(T)$, this yields the bound
\[
\rd \leq \frac{2g}{e} q^{g/e-1/2}.
\]
This implies the desired bound when $e>1$, so we may assume hereafter that $e=1$.
To improve the bound in this case, we distinguish between the discriminant $\Delta$ of $\QQ(\pi)$
and the discriminant $\Delta'$ of $P_A(T)$; it will suffice to check that the ratio $\Delta'/\Delta$ is divisible by $q^{g(g-1)}$. 

Put $\beta = \pi + q/\pi$, let $\Delta_0$ be the discriminant of $\QQ(\beta)$, and let 
$\Delta'_0$ be the discriminant of the minimal polynomial of $\beta$ over $\QQ$. 
Let $\alpha_1,\dots,\alpha_{2g}$ be the conjugates of $\pi$ in $\QQ^{\alg}$, 
sorted so that $\alpha_i \alpha_{2g-i} = q$ for $i=1,\dots,g$. 
Then on one hand,
\begin{align*}
\frac{\Delta'}{(\Delta'_0)^2} &= 
\left( \prod_{i=1}^g (\alpha_i - \alpha_{2g-i})^2  \right)
\left(
\prod_{i=1}^g \prod_{j=i+1}^g \left( \frac{(\alpha_i - \alpha_j)(\alpha_i - \alpha_{2g-j})(\alpha_j - \alpha_{2g-i})(\alpha_{2g-j}-\alpha_{2g-i})}{(\alpha_i + \alpha_{2g-i} - \alpha_j - \alpha_{2g-j})^2}
\right)^2
\right) \\
&= \left( \prod_{i=1}^g (\alpha_i - q/\alpha_i)^2 \right)
\left( 
\prod_{i=1}^g \prod_{j=i+1}^g \left( 
\frac{(\alpha_i - \alpha_j)(\alpha_i - q/\alpha_j)(\alpha_j - q/\alpha_i)(q/\alpha_j-q/\alpha_i)}{(\alpha_i + q/\alpha_i - \alpha_j - q/\alpha_j)^2}
\right)^2
\right) \\
&= \left( \prod_{i=1}^g (\alpha_i - q/\alpha_i)^2 \right)
\left( 
\prod_{i=1}^g \prod_{j=i+1}^g \left( 
\frac{\alpha_i^{-2} \alpha_j^{-2} (\alpha_i - \alpha_j)(\alpha_i \alpha_j - q)(\alpha_i \alpha_j - q)(q \alpha_i - q \alpha_j)}{\alpha_i^{-2} \alpha_j^{-2} (\alpha_i - \alpha_j)^2 (\alpha_i \alpha_j - q)^2}
\right)^2
\right) \\
&= q^{g(g-1)} \prod_{i=1}^g (\alpha_i - q/\alpha_i)^2.
\end{align*}

On the other hand, the relative discriminant of $\QQ(\pi)$ over $\QQ(\beta)$ divides 
the polynomial discriminant of $x^2 - \beta x + q$, which is $(\pi - q/\pi)^2$.
Consequently, 
\[
\frac{\Delta}{\Delta_0^2} \qquad \mbox{divides} \qquad \prod_{i=1}^g (\alpha_i - q/\alpha_i)^2.
\]
By writing
\[
\frac{\Delta'}{\Delta} = \frac{\Delta'}{(\Delta'_0)^2} \left( \frac{\Delta'_0}{\Delta_0} \right)^2
\frac{\Delta_0^2}{\Delta}
\]
and noting that $\Delta'_0$ is divisible by $\Delta_0$, we deduce that $\Delta'/\Delta$ is divisible by $q^{g(g-1)}$ as claimed.
\end{proof}

In Figures \ref{F:disc-hist-1}--\ref{F:disc-hist-6} we give distributions of both the polynomial and number field root discriminants for the different values of $g$ in the database.  In the polynomial case, we divide the root discriminant by $2gq^{\frac{2g-1}2}$, and in the number field case we divide by $2gq^{g/2}$.  The distribution in the number field case appears to be a sum of copies of the polynomial distribution after further rescaling by the appropriate roots of reciprocals of integers.  This phenomenon is especially apparent in the $g=2$ case because of the simple nature of the polynomial distribution.  Large spikes can be seen at $1/2$, $1/3$, $1/4$ and $1/5$ corresponding to an extra factor of $2^4$, $3^4$, $4^4$ or $5^4$ in the index of the maximal order in the equation order of the number field; smaller spikes are visible at $1/\sqrt{2}$ and $1/\sqrt{3}$ corresponding to extra factors of $2^2$ and $3^2$.

Of course, while the endomorphism algebra is usually commutative, sometimes it is not.
In order to give some insight into the non-commutative cases, we provide statistics on the possible
Brauer invariants of the endomorphism algebra as a division algebra over its center.
Table \ref{table:brauer_inv} in the center summarizes the results.  The length of the sequence of
invariants gives the number of places above $p$ in the center, and we have collapsed
all of the commutative endomorphism algebras into a single row for each value of $g$.

\subsection{Isogeny Sato-Tate distribution} \label{subsec:sato-aint}

What is the distribution of $\# A(\FF_q)$? What about when we restrict to ones with certain invariant types?
From the Lang-Weil estimates (\S\ref{ssec:point_count_bounds}) we know that 
\[
\#A(\FF_q) = q^g + O(q^{g-1/2}) \mbox{ as $q \to \infty$}.
\]
This asymptotic suggests that the normalized error
\[
E := \frac{\#A(\FF_q) -q^g}{q^{g-1/2}}
\]
will form an interesting probability distribution $P_{g,q}$ as we vary over all $A$'s of a fixed dimension $g$ defined over $\FF_q$. 

Let us consider what happens if we fix $g$ and take a limit as $q \to \infty$. Writing $\alpha_1,\dots,\alpha_{2g}$ for the Frobenius eigenvalues with $\alpha_i \alpha_{g+i} = q$ for $i=1,\dots,g$, we have
\[
E = q^{-g+1/2} \left(\prod_{i=1}^{2g} (1 - \alpha_i) - \prod_{i=1}^{2g} \alpha_i \right)
= \sum_{i=1}^{2g} q^{-1/2} \alpha_i + o(q^{-1/2});
\]
consequently, the distribution of $E$ will have the same limiting behavior as the distribution of
the normalized Frobenius trace of $A$.

The philosophy of Katz--Sarnak
\cite{Katz1999b} would predict that the distribution of the Frobenius trace should converge to the trace distribution for random matrices in the Lie group $\operatorname{USp}(2g)$. This convergence holds if we average over isomorphism classes of principally polarized abelian varieties, as this forms a geometric family with maximal monodromy \cite[Theorem~11.0.4]{Katz1999b} to which one may apply Deligne's equidistribution theorem \cite[Theorem~9.2.6]{Katz1999b}.

However, since we do not currently have the data of how many isomorphism classes constitute a given isogeny class, 
we are only able to compute the average over isogeny classes. We thus predict a different distribution,
given by a function whose value at $a_1$ is proportional to the measure of the set of $(a_2,\dots,a_g) \in \RR^{g-1}$ for which $T^{g} + a_1 T^{g-1} + \cdots + a_g$ has all roots in $[-2, 2]$. 
We compute this distribution using the method of DiPippo--Howe (see \S\ref{subsec:number of isogeny classes}).
By computing Jacobian determinants, we see that integrating 1 over the space of coefficients $(a_1,\dots,a_g)$ is the same as integrating $1/g!$ over the space of power sums $(p_1,\dots,p_g)$, or integrating $1/g!$ times the Vandermonde determinant $V(r_1,\dots,r_g) = \prod_{1 \leq i<j\leq g} (r_j-r_i)$ over the space of ordered tuples of roots $(r_1,\dots,r_g)$. That is, the desired distribution is given (up to a normalizing factor) by the distribution function
\[
f(x) = \int_{S \cap H_x} V(r_1,\dots,r_g) d \mu_{H_x}
\]
where $S$ denotes the simplex 
\[
S = \{(r_1,\dots,r_g) \in \mathbf{R}^g: -2 \leq r_1 \leq \cdots \leq r_g \leq 2\}
\]
and $H_x$ denotes the hyperplane $r_1 + \cdots + r_g = x$. 
Let us write this as an iterated integral over $r_1,\dots,r_{g-1}$, substituting $r_g = x - r_1 - \cdots - r_{g-1}$;
the endpoints of integration of $r_j$ are then
\[
\max\{r_{j-1}, x - 2(g-j) - \sum_{k=1}^{j-1} r_k \}, \qquad \min\{
2, 
\left( x - \sum_{k=1}^{j-1} r_k \right)/(g-j+1) \}
\]
(writing $r_0 = -2$).
In particular, the distribution function is continuous, even, and piecewise polynomial: on each interval $[-2g+4(i-1), -2g+4i]$ for $i=1,\dots,g$, it is a polynomial of degree $(g-1)(g+2)/2$ with rational coefficients.
For the extreme values $i=1$ and $i=g$, this polynomial is a scalar multiple of $(2g-|x|)^{(g-1)(g+2)/2}$.

Using Mathematica, we computed the distribution functions $f_g(x)$ for $g \leq 4$:
\begin{align*}
g=1:& \begin{cases} \frac{1}{4} & |x| \leq 2 \\
0 & |x| > 2 
\end{cases}
 \\
g=2:& 
\begin{cases}
\frac{3}{2^7} (4 - |x|)^2 & |x| \leq 4 \\
0 & |x| > 4
\end{cases} \\
g=3:& \begin{cases}
\frac{3}{2^{13}} (15|x|^4 - 200|x|^2 + 816)
 & |x| \leq 2 \\
\frac{3}{2^{15}} (6 - |x|)^5 & 2 < |x| \leq 6 \\
0 & |x| > 6
\end{cases}
 \\
g = 4:& \begin{cases}
 \frac{5(-|x|^9-72|x|^8-2304|x|^7+64512 |x|^6-516096 |x|^5+1548288 |x|^4-7077888
    |x|^2+24117248)}{3 \cdot 2^{27}}
     & |x| \leq 4 \\
\frac{5}{3 \cdot 2^{27}} (8 - |x|)^9 & 4 < |x| \leq 8 \\
0 & |x| > 8.
\end{cases}
\end{align*}

See Figure \ref{F:notto-tate} for plots of the distribution for $g \le 4$, and 
Figures \ref{F:notto2}, Figure \ref{F:notto3}, and Figure \ref{F:notto4}
for plots of this prediction against our data.
A table of moments for $g=3,4,5,6$ is given in Table~\ref{T:moments}. 

\begin{figure}[h]
	\begin{center}
\includegraphics[scale=0.5]{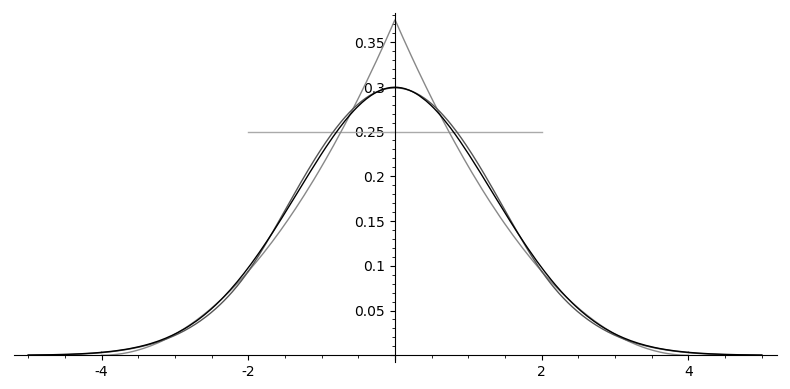}
\end{center}
\caption{Isogeny Sato-Tate distribution for $g \le 4$.\label{F:notto-tate} }
\end{figure}

It is natural to ask about the limit as $g \to \infty$. While we do not have a general formula for $f_g(x)$, we can give some evidence that the limiting distribution $f_\infty$ is a Gaussian with variance 2,
which is also the limit of the Sato-Tate distributions \cite[Theorem~6]{Diaconis1994}.\footnote{It may be feasible to prove that the limiting distribution is Gaussian by bounding the difference in moments directly.}
Let $m_\infty^{(r)} = \int_{-\infty}^\infty x^rf_\infty(x) dx$ and let $\widetilde{m}_g^{(r)}$ be a numerical approximation to $\int_{-\infty}^{\infty} x^rf_g(x) dx$.  If $f_\infty$ were Gaussian, we would have
\begin{equation} \label{eq:gaussian_relation}
m_{\infty}^{(r)} = (m_{\infty}^{(2)})^{r/2} (r-1)!!
\end{equation}
for even $r$.  As Table \ref{T:GaussComp} shows, this identity already holds approximately for $g=6$.
\begin{table}[h]
\begin{center}
\begin{tabular}{l|llllll}
$r$ & $4$ & $6$ & $8$ & $10$ & $12$ & $14$ \\ \hline \hline
$\widetilde{m}_6^{(r)}$ & $10.2041$ & $93.7929$ & $1203.9623$ & $19814.7906$ & $397315.2698$ & $9.3803 \times 10^{6}$ \\
$\left(\widetilde{m}_6^{(2)}\right)^{r/2} (r-1)!!$ & $10.2243$ & $94.3750$ & $1219.5797$ & $20263.1948$ & $411486.7236$ & $9.8754 \times 10^6$
\end{tabular}
\end{center}
\caption{Predicted moments from equation \eqref{eq:gaussian_relation}. \label{T:GaussComp} }
\end{table}

The authors don't have enough data to determine what happens when we restrict to isogeny classes of abelian varieties with a fixed Newton polygon, endomorphism algebra, delta rank, or Galois group.
We expect similar interesting distributions in these cases as $q\to \infty$ as we normalize the dimensions appropriately.
We also remark that a determination of the conjectural distribution of point counts also remains open (for say, $P(T)$ of a fixed degree $2g$ and fixed Galois group $G$).

\begin{table}[h]
\begin{center}
\begin{tabular}{l|lllllll}
$g$ &2nd & 4th & 6th& 8th & 10th & 12th & 14th \\ \hline \hline 
3	& $1.7142$ & $8.6857$ & $71.7575$ & $796.1318$ & $10750.4655$ & $166954.5839$ & $2.8786 \times 10^{6}$ \\
	4& $1.7777$ & $9.4199$ & $82.5201$ & $1001.4566$ & $15384.2906$ & $282674.8553$ & $5.9748 \times 10^{6}$ \\
	5& $1.8181$ & $9.8834$ & $89.1908$ & $1121.6573$ & $18035.9973$ & $351973.2932$ & $8.0435 \times 10^{6}$ \\
	6& $1.8461$ & $10.2041$ & $93.7929$ & $1203.9623$ & $19814.7906$ & $397315.2698$ & $9.3803 \times 10^{6}$ \\
\end{tabular}
\end{center}
\caption{Even moments for the isogeny Sato-Tate distributions for $g=3,4,5,6$. \label{T:moments} }
\end{table}

\subsection{Maximal and Minimal Point Counts} \label{subsec:maximal-minimal}
The question of the maximum number of points on a curve of given genus over a given finite field has been studied quite extensively, due to its connection with error-correcting codes via the Goppa construction. The web site \url{https://manypoints.org} tabulates most known results on this question.
In this section we investigate minimal and maximal point counts of abelian varieties. 

We will say an isogeny class $[A]$ is \emph{maximal} (resp.\ \emph{minimal}) for a fixed $g$ and $q$ when $\#A(\FF_q)$ is the maximum (resp.\ minimum) of $$\lbrace \#B(\FF_q) : \mbox{ $B$ an abelian variety of dim $g$ over $\FF_q$} \rbrace$$

While studying maximal (resp.\ minimal) abelian varieties is formally similar to studying maximal (resp.\ minimal) curves, it is not directly related: an isogeny class being maximal (resp.\ minimal) has nothing to do with it containing the Jacobian of some maximal (resp.\ minimal) curve. 
See Example \ref{E:maxmin} for explicit examples. 

In a similar vein one might also naively expect that, since Artin-Schreier curves are all maximal and known to be supersingular (see Example~\ref{E:AS}), an abelian variety $A$ being maximal (resp.\ minimal) might imply that $A$ is supersingular. 
Indeed, in some references (such as \cite{Karemaker2017}) the terms \emph{maximal} and \emph{minimal} are used to refer exclusively to supersingular abelian varieties.
However, we will see below that a maximal (resp.\ minimal) abelian variety in our sense need not be supersingular.

When looking at the data our first observation is that for a fixed $g,q$ we almost always found that there were unique isogeny classes $A_{\max}$ and $A_{\min}$ with $L(A_{\max},T) = L(A_{\min},-T)$. 
This equality implies that $A_{\min}$ and $A_{\max}$ are quadratic isogeny twists. 
\begin{example}
When $g=3$ and $q=3$ we have $P(A_{\min},T) = T^6 -9T^5+ 36T^4 -81T^3, 108T^2 -81T +27$ which is \avlink{3.3.aj\_bk\_add} and $P(A_{\max},T) = T^6 +9T^5+ 36T^4 +81T^3+108T^2 +81T +27$, which is \avlink{3.3.j\_bk\_dd}.
\end{example}

We now explain our findings. In Table~\ref{table:genus 3 maximal minimal}, we report the unique minimal and maximal isogeny classes for $g=3$ and $3 \leq q \leq 25$. These are all isogenous to cubes of elliptic curves, 
and the minimal and maximal examples for a given $q$ are isogeny twists of each other;
but some are ordinary and some are supersingular.
We omit $q=2$ because it is a bit anomalous: there are 7 minimal isogeny classes (and a unique maximal one). See Lemma~\ref{L:strict bound} and Lemma~\ref{L:min max ordinary supersingular}
for an explanation of these observations.

\begin{table}
	\begin{center}
\begin{tabular}{lllll}
&$\#A(\FF_q)$ & $P_A(T)$ & Newton Polygon & Jac? \\ \hline
$q=3$: & $343$ & ${\left(T^{2} + 3 \, T + 3\right)}^{3}$ & ss & No \\
& $1$ & ${\left(T^{2} - 3 \, T + 3\right)}^{3}$ & ss & No \\
$q=4$: & $729$ & ${\left(T + 2\right)}^{6}$ & ss & No \\
& $1$ & ${\left(T - 2\right)}^{6}$ & ss & No \\
$q=5$: & $1000$ & ${\left(T^{2} + 4 \, T + 5\right)}^{3}$ & ord & No \\
& $8$ & ${\left(T^{2} - 4 \, T + 5\right)}^{3}$ & ord & No \\
$q=7$: & $2197$ & ${\left(T^{2} + 5 \, T + 7\right)}^{3}$ & ord & No \\
& $27$ & ${\left(T^{2} - 5 \, T + 7\right)}^{3}$ & ord & No \\
$q=8$: & $2744$ & ${\left(T^{2} + 5 \, T + 8\right)}^{3}$ & ord & ??? \\
& $64$ & ${\left(T^{2} - 5 \, T + 8\right)}^{3}$ & ord & No \\
$q=9$: & $4096$ & ${\left(T + 3\right)}^{6}$ & ss & ??? \\
& $64$ & ${\left(T - 3\right)}^{6}$ & ss & No \\
$q=11$: & $5832$ & ${\left(T^{2} + 6 \, T + 11\right)}^{3}$ & ord & No \\
& $216$ & ${\left(T^{2} - 6 \, T + 11\right)}^{3}$ & ord & No \\
\end{tabular}
\end{center}
\caption{Maximal and minimal abelian varieties ($g=3$).}
\label{table:genus 3 maximal minimal}
\end{table}

We now investigate these observations.
Recall that the (sharpened) Weil bounds on an abelian variety $A$ of dimension $g$ over $\FF_q$ take the form
\begin{equation}\label{E:weil-bounds}
\lceil (\sqrt{q}-1)^2 \rceil \leq \#A(\FF_q)^{1/g} \leq \lfloor (\sqrt{q}+1)^2 \rfloor.
\end{equation}
These are precisely the maximal and minimal values appearing in Table~\ref{table:genus 3 maximal minimal}. 
We are thus led to ask whether equality in the upper (resp.\ lower) bounds happens only for a power of a maximal (resp.\ minimal) elliptic curve, or equivalently whether the inequalities become strict if we restrict to simple abelian varieties of dimension greater than 1.
This is in fact claimed in both \cite[Th\'eor\`eme~1.1]{Aubry2012} and \cite[Corollary~2.2, Corollary~2.14]{Aubry2013}, but we have already seen by an example that this is false for $q=2$; moreover, Theorem~\ref{T:Madan-Pal} implies that there are infinitely many $g$ for which there exists a simple abelian variety $A$ of dimension $g$ over $\FF_2$ with $\#A(\FF_2) = 1$.

On the other hand, by working more closely through the literature, we can recover an argument that the inequalities become strict. 
\begin{lemma} \label{L:strict bound}
	For $q=5,7$, assume that $\#A(\FF_q)^{1/g}>N_q$ for every abelian variety $A$ of dimension $g$ defined over $\FF_q$, where $N_5=2.708$ and $N_7 =3.970$ (reported to us by Kadets).
	Then for all $q > 2$, the inequalities \eqref{E:weil-bounds} become strict for simple abelian varieties of dimension greater than $1$.
\end{lemma}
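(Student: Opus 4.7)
The plan is to reduce to the strict lower bound via quadratic twists, then handle the strict lower bound by splitting into cases on $q$. First I would observe that the quadratic isogeny twist $A \mapsto A^{(-1)}$ (defined by replacing each Weil number $\alpha_i$ by $-\alpha_i$; the corresponding Weil polynomial $P_A(-T)$ produces a valid isogeny class by Honda--Tate) preserves simplicity and exchanges the Weil upper and lower bounds. Hence it suffices to prove, for simple $A$ of dimension $g > 1$ over $\FF_q$ with $q > 2$, the strict lower bound $\#A(\FF_q)^{1/g} > \lceil (\sqrt{q}-1)^2 \rceil$.

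When $q$ is a perfect square, $(\sqrt{q}-1)^2$ is a rational integer, so Theorem~\ref{T:Kadets general bound} immediately gives $\#A(\FF_q)^{1/g} \geq \lfloor (\sqrt{q}-1)^2 \rfloor + 1 = (\sqrt{q}-1)^2 + 1 > \lceil (\sqrt{q}-1)^2 \rceil$. When $q \in \{3, 8\}$ and $g \geq 4$, Theorem~\ref{T:Kadets small-q bound} supplies a lower bound strictly exceeding $\lceil (\sqrt{q}-1)^2 \rceil$ (which equals $1$ and $4$ respectively), and for $g \in \{2, 3\}$ I would inspect the finite exceptional list in Tables 2 and 3 of \cite{Kadets2019} directly. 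When $q \in \{5, 7\}$, the hypothesis gives $\#A(\FF_q)^{1/g} > N_q$, together with the numerical checks $N_5 = 2.708 > 2 = \lceil (\sqrt{5}-1)^2 \rceil$ and $N_7 = 3.970 > 3 = \lceil (\sqrt{7}-1)^2 \rceil$.

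The remaining case is $q \geq 11$ not a perfect square, where Kadets' general bound yields only $\#A(\FF_q)^{1/g} \geq \lfloor (\sqrt{q}-1)^2 \rfloor + 1 = \lceil (\sqrt{q}-1)^2 \rceil$ (not strict), so an equality-case analysis is required. I would assume for contradiction that $\#A(\FF_q) = (q + 1 - \lfloor 2\sqrt{q} \rfloor)^g$. Writing $\#A(\FF_q) = \prod_{i=1}^g (q + 1 - \beta_i)$ with $\beta_i = \alpha_i + q/\alpha_i \in [-2\sqrt{q}, 2\sqrt{q}]$, the integer minimum of this product is attained only when every $\beta_i$ equals the rational integer $\lfloor 2\sqrt{q} \rfloor$. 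This forces $P_A(T) = (T^2 - \lfloor 2\sqrt{q} \rfloor T + q)^g$ with irreducible $h_A$, and hence the Honda--Tate exponent $e = g$. Applying the Brauer-invariant recipe of \S\ref{subsec:Honda-Tate}: provided $p \nmid \lfloor 2\sqrt{q} \rfloor$ (which holds for $q \geq 11$ not a perfect square outside a finite list of prime powers), the invariants vanish at places above $p$, so $e = 1$, contradicting $g > 1$.

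The principal obstacle is this equality-case analysis, in two places: first, a rigorous argument that the integer minimum of $\prod(q + 1 - \beta_i)$ over tuples of real algebraic integers $\beta_i \in [-2\sqrt{q}, 2\sqrt{q}]$ (with the product constrained to be a rational integer) is attained exactly at $\beta_i = \lfloor 2\sqrt{q} \rfloor$, rather than at some nearby non-integer configuration; and second, the handling of the exceptional prime powers $q$ (such as $q = 128$) where $p$ happens to divide $\lfloor 2\sqrt{q} \rfloor$, which require an explicit Brauer computation to show that the local invariants of the candidate division algebra still cannot have denominator $g$.
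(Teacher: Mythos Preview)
Your reduction of the upper bound to the lower bound via the quadratic isogeny twist is correct and worth making explicit (the paper's proof addresses only the lower bound, so some such reduction is tacitly assumed). Your treatment of perfect-square $q$ via Theorem~\ref{T:Kadets general bound}, and of $q\in\{3,4,5,7,8\}$ via Theorem~\ref{T:Kadets small-q bound} together with the hypothesis for $q=5,7$, is essentially the same as the paper's.

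The divergence is at non-square $q\geq 11$. Here the paper does \emph{not} attempt an equality-case analysis: it simply observes that, by inspection of the proof of \cite[Proposition~2.17]{Aubry2013}, the lower Weil bound is already strict for simple $A$ of dimension $>1$ whenever $q\geq 8$. Thus only $q=3,4,5,7$ require separate treatment, and the whole argument is a few lines.

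The obstacles you flag in your equality-case route are genuine and not easily removed. The $\beta_i$ are conjugate totally real algebraic integers, not rational integers, and the only integrality constraint is on the product $\prod_i(q+1-\beta_i)$. There is no evident reason the product value $(\lceil(\sqrt{q}-1)^2\rceil)^g$ should force the single configuration $\beta_i=\lfloor 2\sqrt{q}\rfloor$; one would have to rule out every other Galois orbit of totally real integers near $2\sqrt{q}$ giving the same product, which is exactly the kind of delicate analysis that \cite{Aubry2013} and \cite{Kadets2019} carry out by other means. Likewise the exceptional prime powers with $p\mid\lfloor 2\sqrt{q}\rfloor$ would each require a bespoke Brauer computation. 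The literature citation sidesteps all of this.
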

\begin{proof}
First, by inspection of the proof of \cite[Proposition~2.17]{Aubry2013}, one deduces that
the lower bound is strict for $q \geq 8$; thus only the cases $q=3,4,5,7$ are at issue.
For $q=3$ and $q=4$, Theorem~\ref{T:Kadets small-q bound} implies that the lower bound is strict
(for $q=4$ we may also apply Theorem~\ref{T:Kadets general bound}).
For $q=5$ and $q=7$, one can obtain similar lower bounds by emulating the calculation used to prove
Theorem~\ref{T:Kadets small-q bound}. 
Kadets reports that a nonrigorous version of the calculation
gives the lower bounds $2.708$ for $q=5$ and $3.970$ for $q=7$, but as of this writing a rigorous calculation remains to be made.
\end{proof}

We raise our observations to the status of theorems below.
\begin{lemma}\label{L:min max ordinary supersingular}
Under the hypothesis of Lemma~\ref{L:strict bound},
for every $g>1$ and every $q>2$ there exists unique maximal and minimal isogeny classes of dimension $g$. 
	These classes are quadratic isogeny twists of each other and are a power of the unique maximal (resp.\ minimal) isogeny class of elliptic curves.
	Finally, the class is supersingular or ordinary according to whether $p$ divides $\lfloor 2 \sqrt{q} \rfloor$.
\end{lemma}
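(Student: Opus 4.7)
The plan is to reduce the statement to an assertion about elliptic curve isogeny classes, then invoke the explicit Honda--Tate/Deuring classification in dimension one. Every abelian variety $A$ of dimension $g$ over $\FF_q$ is isogenous to a product $\prod_i A_i^{n_i}$ of pairwise non-isogenous simple factors $A_i$, with $\sum_i n_i \dim A_i = g$, and $\#A(\FF_q) = \prod_i \#A_i(\FF_q)^{n_i}$. The sharpened Weil bounds \eqref{E:weil-bounds} applied to each factor show that $\#A(\FF_q)^{1/g}$ is a weighted geometric mean of the quantities $\#A_i(\FF_q)^{1/\dim A_i}$, each of which lies in $[\lceil(\sqrt{q}-1)^2\rceil,\,\lfloor(\sqrt{q}+1)^2\rfloor]$. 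By Lemma~\ref{L:strict bound}, if any $A_i$ has $\dim A_i > 1$, then the corresponding factor lies strictly inside this interval, so the geometric mean cannot be extremal. Hence every extremal $A$ is isogenous to a product of $g$ elliptic curves.

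Next I would identify the unique extremal product. Writing $\#E_i(\FF_q) = q + 1 - a_i$ with $|a_i| \le \lfloor 2\sqrt{q}\rfloor$, the product $\prod_i (q + 1 - a_i)$ is maximized (respectively minimized) precisely when every $a_i$ equals $-\lfloor 2\sqrt{q}\rfloor$ (respectively $\lfloor 2\sqrt{q}\rfloor$). Both extremal traces $\pm \lfloor 2\sqrt{q}\rfloor$ are realized by a (unique, by Honda--Tate) elliptic curve isogeny class, as noted in the discussion preceding Theorem~\ref{T:Madan-Pal}. So $A_{\max} \sim E_{\max}^g$ and $A_{\min} \sim E_{\min}^g$, with characteristic polynomials $(T^2 + \lfloor 2\sqrt{q}\rfloor T + q)^g$ and $(T^2 - \lfloor 2\sqrt{q}\rfloor T + q)^g$ respectively; these differ by $T \mapsto -T$, exhibiting $A_{\min}$ and $A_{\max}$ as quadratic isogeny twists in the sense of Definition~\ref{D:isogeny twist}.

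Finally, the ordinary/supersingular dichotomy follows from the standard fact that an elliptic curve over $\FF_q$ with Frobenius trace $a$ is supersingular if and only if $p \mid a$ (readable off the Newton polygon of $T^2 - aT + q$ from the $p$-adic valuation of $a$). Applied to $a = \pm\lfloor 2\sqrt{q}\rfloor$, this says that $E_{\max}$ and $E_{\min}$, and hence their $g$-th powers, are supersingular exactly when $p \mid \lfloor 2\sqrt{q}\rfloor$, and ordinary otherwise. The main obstacle in the plan is really packaged into Lemma~\ref{L:strict bound}: once one accepts its hypothesis, the remaining argument is a routine reduction to the elliptic-curve case followed by an appeal to explicit Honda--Tate.
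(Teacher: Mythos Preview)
Your proposal is correct and follows essentially the same approach as the paper's proof. The paper is more terse: it cites Theorem~\ref{T:Kadets general bound} and ``the discussion following equation~\eqref{E:weil-bounds}'' for the reduction to powers of extremal elliptic curves, whereas you spell out the weighted-geometric-mean argument over the simple factorization and invoke Lemma~\ref{L:strict bound} directly (which is in fact what is needed, since Theorem~\ref{T:Kadets general bound} alone does not give strict inequality when $q$ is not a square). The identification of the quadratic isogeny twist via $T\mapsto -T$ and the supersingular/ordinary dichotomy via $p\mid a$ are handled the same way in both.
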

\begin{proof}
	By Theorem~\ref{T:Kadets general bound}, one always gets a maximal (resp.\ minimal) abelian variety of a given dimension over $\FF_q$ by taking a power of the maximal (resp.\ minimal) elliptic curve over $\FF_q$.
	By the discussion following equation \eqref{E:weil-bounds}, this is unique except for the minimal case over $\FF_2$ in some genera--- the isogeny class of an elliptic curve is determined by a single point count.
	In particular, since the minimal and maximal elliptic curve over $\FF_q$ are isogeny twists of each other,
	the same is true of their powers.
\end{proof}

\begin{remark}
As asserted in Lemma~\ref{L:min max ordinary supersingular},
		whether or not the maximal and minimal elliptic curves over $\FF_q$, 
		and hence the resulting abelian varieties, are supersingular or ordinary
	depends on whether or not $\lfloor 2 \sqrt{q} \rfloor$ is divisible by $p$. 
	When $q = p$, this divisibility holds only for $p=2,3$ (as otherwise $0 < 2 \sqrt{p} < p$).
	When $q$ is a square, so $q = p^{2e}$, we have $\lfloor 2 \sqrt{q} \rfloor = 2 \sqrt{p^{2a}}=2p^a$ which is obviously divisible by $p$.
	In other cases, $q = p^{2e+1}$ for some positive integer $e$ and we are asking whether the base-$p$ expansion of $2\sqrt{p}$ has a zero in the $e$-th position after the radix point; this can occur but is rather sporadic.\footnote{The \emph{radix point} is the analogue of the decimal point for a general base expansion. 
	}
\end{remark}

Following the discussion in Subsection~\ref{ssec:point_count_bounds}, we now restrict attention to simple abelian varieties. 
We say that an isogeny class $[A]$ of dimension $g$ over $\FF_q$
is \emph{simple-maximal} (resp.\ \emph{simple-minimal}) if
$\#A(\FF_q)$ is maximal (resp.\ minimal) among simple abelian varieties of dimension $g$ over $\FF_q$.
In Table~\ref{T:simple-maxmin}, we report the unique simple-maximal and simple-minimal isogeny classes
for $g=3$ and $5 \leq q \leq 25$. For each of $q=2,3,4$, there are 2 simple-minimal isogeny classes (and a unique simple-maximal one).

We make a few curious observations about this data which we are unable to rigorously explain. For one, all of the simple-maximal and simple-minimal examples are ordinary. For another, the simple-maximal and simple-minimal varieties are most often isogeny twists of each other, but not always (see $q=5, 13, 17, 19$);
in any case, they have opposite sign patterns (the coefficients of $P_A(T)$ are positive for $A$ simple-maximal and alternate in sign for $A$ simple-minimal). 
Finally, note that the simple-maximal variety for $q=3$ is a Jacobian.

\begin{table}[h]
\begin{tabular}{lllll}
    &$\#A(\FF_q)$ & $P_A(T)$ & NP & Jac? \\ \hline
	$q=5$: & $631$ & $T^{6} + 8 \, T^{5} + 34 \, T^{4} + 93 \, T^{3} + 170 \, T^{2} + 200 \, T + 125$ & ord & Yes \\
	& $25$ & $T^{6} - 8 \, T^{5} + 32 \, T^{4} - 85 \, T^{3} + 160 \, T^{2} - 200 \, T + 125$ & ord & No \\
	$q=7$: & $1561$ & $T^{6} + 11 \, T^{5} + 59 \, T^{4} + 195 \, T^{3} + 413 \, T^{2} + 539 \, T + 343$ & ord & ??? \\
	& $71$ & $T^{6} - 11 \, T^{5} + 59 \, T^{4} - 195 \, T^{3} + 413 \, T^{2} - 539 \, T + 343$ & ord & No \\
	$q=8$: & $2157$ & $T^{6} + 12 \, T^{5} + 69 \, T^{4} + 243 \, T^{3} + 552 \, T^{2} + 768 \, T + 512$ & ord & ??? \\
	& $111$ & $T^{6} - 12 \, T^{5} + 69 \, T^{4} - 243 \, T^{3} + 552 \, T^{2} - 768 \, T + 512$ & ord & No \\
	$q=9$: & $2911$ & $T^{6} + 13 \, T^{5} + 81 \, T^{4} + 305 \, T^{3} + 729 \, T^{2} + 1053 \, T + 729$ & ord & ??? \\
	& $169$ & $T^{6} - 13 \, T^{5} + 81 \, T^{4} - 305 \, T^{3} + 729 \, T^{2} - 1053 \, T + 729$ & ord & No \\
	$q=11$: & $4861$ & $T^{6} + 15 \, T^{5} + 105 \, T^{4} + 439 \, T^{3} + 1155 \, T^{2} + 1815 \, T + 1331$ & ord & ??? \\
	& $323$ & $T^{6} - 15 \, T^{5} + 105 \, T^{4} - 439 \, T^{3} + 1155 \, T^{2} - 1815 \, T + 1331$ & ord & No \\
	$q=13$: & $7181$ & $T^{6} + 16 \, T^{5} + 122 \, T^{4} + 555 \, T^{3} + 1586 \, T^{2} + 2704 \, T + 2197$ & ord & ??? \\
	& $615$ & $T^{6} - 16 \, T^{5} + 120 \, T^{4} - 543 \, T^{3} + 1560 \, T^{2} - 2704 \, T + 2197$ & ord & No \\
	$q=16$: & $12649$ & $T^{6} + 19 \, T^{5} + 166 \, T^{4} + 847 \, T^{3} + 2656 \, T^{2} + 4864 \, T + 4096$ & ord & ??? \\
	& $1189$ & $T^{6} - 19 \, T^{5} + 166 \, T^{4} - 847 \, T^{3} + 2656 \, T^{2} - 4864 \, T + 4096$ & ord & No \\
	$q=17$: & $14351$ & $T^{6} + 19 \, T^{5} + 169 \, T^{4} + 885 \, T^{3} + 2873 \, T^{2} + 5491 \, T + 4913$ & ord & ??? \\
	& $1539$ & $T^{6} - 19 \, T^{5} + 167 \, T^{4} - 871 \, T^{3} + 2839 \, T^{2} - 5491 \, T + 4913$ & ord & No \\
	$q=19$: & $19601$ & $T^{6} + 21 \, T^{5} + 201 \, T^{4} + 1119 \, T^{3} + 3819 \, T^{2} + 7581 \, T + 6859$ & ord & ??? \\
	& $2113$ & $T^{6} - 21 \, T^{5} + 197 \, T^{4} - 1085 \, T^{3} + 3743 \, T^{2} - 7581 \, T + 6859$ & ord & No \\
	$q=23$: & $32671$ & $T^{6} + 24 \, T^{5} + 258 \, T^{4} + 1591 \, T^{3} + 5934 \, T^{2} + 12696 \, T + 12167$ & ord & ??? \\
	& $4049$ & $T^{6} - 24 \, T^{5} + 258 \, T^{4} - 1591 \, T^{3} + 5934 \, T^{2} - 12696 \, T + 12167$ & ord & ??? \\
	$q=25$: & $40391$ & $T^{6} + 25 \, T^{5} + 281 \, T^{4} + 1809 \, T^{3} + 7025 \, T^{2} + 15625 \, T + 15625$ & ord & ??? \\
	& $5473$ & $T^{6} - 25 \, T^{5} + 281 \, T^{4} - 1809 \, T^{3} + 7025 \, T^{2} - 15625 \, T + 15625$ & ord & ??? \\
\end{tabular}
\caption{A table of the unique simple-maximal and simple-minimal Weil polynomials for $g=3$.}
\label{T:simple-maxmin}
\end{table}

\begin{table}[ht]
	\begin{tabular}{llp{4in}ll}
		&$\#A(\FF_q)$ & $P_A(T)$ & NP & Jac? \\ \hline
		$q=3$: & $979$ & $T^{8} + 7 \, T^{7} + 27 \, T^{6} + 72 \, T^{5} + 143 \, T^{4} + 216 \, T^{3} + 243 \, T^{2} + 189 \, T + 81$ & ord & ??? \\
		& $5$ & $T^{8} - 6 \, T^{7} + 13 \, T^{6} - 10 \, T^{5} + T^{4} - 30 \, T^{3} + 117 \, T^{2} - 162 \, T + 81$ & ord & No \\
		$q=4$: & $2521$ & $T^{8} + 9 \, T^{7} + 42 \, T^{6} + 132 \, T^{5} + 305 \, T^{4} + 528 \, T^{3} + 672 \, T^{2} + 576 \, T + 256$ & ord & ??? \\
		& $29$ & $T^{8} - 9 \, T^{7} + 41 \, T^{6} - 125 \, T^{5} + 285 \, T^{4} - 500 \, T^{3} + 656 \, T^{2} - 576 \, T + 256$ & ord & No \\
		$q=5$: & $5599$ & $T^{8} + 11 \, T^{7} + 62 \, T^{6} + 229 \, T^{5} + 601 \, T^{4} + 1145 \, T^{3} + 1550 \, T^{2} + 1375 \, T + 625$ & ord & ??? \\
		& $61$ & $T^{8} - 10 \, T^{7} + 45 \, T^{6} - 130 \, T^{5} + 305 \, T^{4} - 650 \, T^{3} + 1125 \, T^{2} - 1250 \, T + 625$ & ord & No \\
	\end{tabular}
	\caption{A table of the unique simple-maximal and simple-minimal Weil polynomials for $g=4$. }
\end{table}

Finally, in Table~\ref{table:compare point counts to Kadets bounds}, we compare the extreme values of $\#A(\FF_q)^{1/g}$
to the bounds given in Theorem~\ref{T:Kadets small-q bound}.

\begin{table}[ht]
\begin{center}
\begin{tabular}{cccccc}
$q$ & $g$ & lower bound & minimum & maximum & upper bound \\
\hline
2 & 4 & 1 & 1 & 3.940 & 4.035\\
2 & 5 & 1 & 1.149 & 3.717 & 4.035\\
2 & 6 & 1 & 1 & 3.697 & 4.035 \\
3 & 4 & 1.359 & 1.495 & 5.594 & 5.634 \\
3 & 5 & 1.359 & 1.670 & 5.423 & 5.634 \\
4 & 4 & 2.275 & 2.321 & 7.086 & 7.382 \\
5 & 4 & 2.708 & 2.795 & 8.615 & 8.938\\
\end{tabular}
\end{center}
\caption{Bounds and extreme values for $\#A(\FF_q)^{1/g}$ for a simple abelian variety $A$ of dimension $g$ over $\FF_q$.
The bounds for $q=2,3,4$ are taken from \cite{Kadets2019}; the bounds for $q=5$ were computed numerically
(but not rigorously) by Kadets using the same method (see Lemma~\ref{L:strict bound}).} 
\label{table:compare point counts to Kadets bounds}
\end{table}
 
\section{An Isogeny Class Scavenger Hunt}
\label{S:examples}

In this section, we describe a number of examples related to questions or results in the literature.

Several of these examples involve Jacobians of curves, whereas the LMFDB does not currently contain
complete information about Jacobians of curves of genus at least 4 (\S \ref{ssec:Jacobians}).
To generate these examples, we used Sage \cite{sage} to exhaust over hyperelliptic curves, computing zeta functions until we found one of the desired form. (This can also be done in Magma \cite{magma}.)

\subsection{Some Basic Examples}

\begin{example}[supersingular elliptic curves]\label{exa:supersingular} One of the first interesting examples of abelian varieties are supersingular elliptic curves defined over finite fields.
If $E$ is an elliptic curve defined over a finite field, we have 
 $$\zeta(E/\FF_q,T) = \frac{1-aT + qT^2}{(1-T)(1-qT)},$$
with $a \in \ZZ$ and $\vert a \vert < 2 \sqrt{q}$ by the Hasse-Weil bound. In addition, if $E$ is supersingular then $a \equiv 0 \mod p$, for $p$ the characteristic of $\FF_q$.
Therefore if $p$ is prime and $p \geq 5$, then there is a unique isogeny class of supersingular elliptic curves over $\FF_p$, namely the one with $a=0$.
This means that
 $$P_E(T)= T^2 + q =(T - i \sqrt{q})(T+i \sqrt{q});$$ 
 a similar phenomenon holds in more generality. Indeed, when $A/\FF_q$ is a simple abelian variety, $A$ is supersingular if and only if $\pi_A = \zeta \sqrt{q}$ for $\zeta$ a root of unity: namely, if $A$ is supersingular,
 then $\pi/\sqrt{q}$ is an algebraic number with norm 1 in every finite and infinite place, so by Kronecker--Weber it is a root of unity. 
 
In contrast, while an elliptic curve defined over a finite field has $p$-rank zero if and only if it is supersingular, this does not hold in higher dimension, even for simple abelian varieties. 
For example, isogeny class \avlink{3.2.ac\_c\_ac} is geometrically simple of $p$-rank 0, but has Newton polygon slopes $[1/3, 1/3, 1/3, 2/3, 2/3, 2/3]$ and therefore is not supersingular.
(This is the smallest dimension for which $p$-rank 0 does not imply supersingular.)
Conversely, a supersingular isogeny class always has $p$-rank 0.

We finally comment on the endomorphism rings of supersingular elliptic curves.
While every supersingular elliptic curve has geometric endomorphism ring a maximal order in a quaternion algebra, it is not the case that every endomorphism is defined over the field of definition of the elliptic curve.
For example, there are supersingular elliptic curves defined over $\FF_p$ for each prime $p$ (belonging to the isogeny class with $a=0$, as above), but each of them has endomorphism ring isomorphic to an order in an imaginary quadratic field and only acquires its extra endomorphisms over an extension of $\FF_p$.
For example, isogeny class \avlink{1.3.ad} has endomorphism degree $6$, by which we mean that its geometric endomorphisms are defined over $\FF_{3^6}$; isogeny class \avlink{1.2.ac} has endomorphism degree $4$; and isogeny class \avlink{1.2.a} has endomorphism degree $2$.
\end{example}

\begin{example}
We now give examples of particularly uncomplicated Weil polynomials for $g=2$ and $g=4$.
These polynomials can be useful to check conjectures when one wants to produce examples quickly by hand.
We first observe that by Lemma~\ref{L:weil-q-number-characterization} a Weil polynomial $P_A(T)=P(T)$ of an abelian variety $A$ of dimension $g$ can be written as 
\begin{equation}\label{E:weil-poly-example}
P(T)= \prod_{i=1}^g(T^2 -\beta_i T + q)
\end{equation} 
 where $\beta_i = \alpha_i + q/\alpha_i$ is totally real, and $\alpha_1,\ldots,\alpha_{2g}$ are the roots of the Weil polynomial, arranged so that $\alpha_i \alpha_{2g-i} = q$ for $i=1,\dots,g$.

When $g=2$, we can expand this to obtain:
 $$P(T) = T^4 -(\beta_1+\beta_2)T^3 + (2q+\beta_1\beta_2)T^4-(\beta_1+\beta_2)q T + q^2.$$
For $r$ a positive integer, choosing $\beta_1= \sqrt{r}$ and $\beta_2 = - \sqrt{r}$ gives a nice family of examples.
In fact, for any positive integer $a$ not divisible by $p$
such that $2q>|a|$, letting $r = 2q-a$ gives the characteristic polynomial
 $$P(T) = T^4 +aT^2 + q^2 $$
for an abelian surface.
(Note that we require $p \nmid a$ to ensure that $e_A = 1$; see Section~\ref{subsec:Honda-Tate}.)
This isogeny class has Weil $q$-numbers
$$+\sqrt{ \frac{-a + \sqrt{a^2 - 4q^2} }{2}  },+\sqrt{ \frac{-a - \sqrt{a^2 - 4q^2} }{2}  },-\sqrt{ \frac{-a + \sqrt{a^2 - 4q^2} }{2}  },-\sqrt{ \frac{-a -\sqrt{a^2 - 4q^2} }{2}  }.$$
Even more concretely, for $q=5$ any $a \in \{-9,-8, \ldots, 8, 9\}$
is admissible. In the case $a=-1$ this is \avlink{2.5.a\_ab}. 

One can generalize the above example by considering \eqref{E:weil-poly-example} where $\beta_1,\dots,\beta_g$ are totally real algebraic integers which form a union of Galois orbits over $\QQ$
 and $q$ is a sufficiently large prime power.

Another pleasing family of isogeny classes can be obtained by considering the case of $g=4$ with two isogeny factors of dimension $2$ each with Weil polynomial of the form $T^4+a_iT^2+q^2$ as above. 
In this case this isogeny class has Weil polynomial
$$ (T^4+a_1T^2+q^2)(T^4+a_2T^2+q^2) = T^8  +(a_1+a_2)T^6 + (2q^2+a_1a_2)T^4+(a_1+a_2)q^2 T + q^4,$$
which we can specialize to $a_1 = c + b\sqrt{d}$ and $a_2 = c-b\sqrt{d}$, where $a_1$ and $a_2$ are algebraic integers, and as before $2q> |a_i|$ for each $i$, to give 
$$ a_1 + a_2 = 2c, \qquad a_1a_2 = c^2-db^2,$$
and 
$$P(T) = T^8 +2cT^6 + (2q^2 +c^2-db^2)T^4 +2cq^2T+q^4.$$
In the special case $c=1,b=2,d=2,q=5$ this is \avlink{4.5.a\_c\_a\_br}.
\end{example}

\begin{example}[{\cite[Example 2.1]{Achter2011}} and {\avlink{4.3.a\_a\_a\_g}}]
Consider the hyperelliptic curve given by 
 $$ C: y^2 = x^9 + x^7 + 2x^5 + x^4 + 2x^3 + x^2 + x. $$
The genus of this curve is $4$ and 
 $$ L(C/\FF_3,T) = 81T^8 + 6T^4 + 1. $$
This example was computed by writing down the shape of the zeta function and equating terms in the truncated Taylor series.
\end{example}

\begin{example}[{\cite[Example 2.3]{Voight2005}} and {\avlink{3.2.a\_a\_f}} ] 
The isogeny class of the Jacobian of a curve $C$ over $\FF_q$ can be identified by knowing $\#C(\FF_{q^r})$ for $1\leq r \leq g$. 
In the examples where $C$ is a projective curve defined by $x^3y+x^3z+y^3z=0$ or $x^3y+y^3z+xz^3 = 0$ over $\FF_2$, $C$ has genus 3 and we can compute point counts ``by hand'' using a presentation of the field:
\begin{center}
\begin{tabular}{ c || c | c | c | c }
  $r$ & 1 & 2 & 3 & 4  \\
  \hline $\#C(\FF_{2^r})$ & 3 & 4 & 24& 17 
\end{tabular}
\end{center}
This allows us to solve explicitly for the $L$-polynomial (see \emph{op.\ cit}.)
  $$ L(T) = 8T^6 +5T^3 + 1.$$
\end{example}

\begin{example}\label{E:AS}
Artin-Schreier curves are fan favorites.
An affine Artin-Schreier curve has a model
  $$U_{f,q}: y^q-y = f_d(x)$$ 
where $f_d(x) \in \FF_q[x]$ is a polynomial of degree $d$. 
After a desingularization of the naive projective model one gets a proper model $X=X_{f,q}$.
In the case where $q=p$, the genus of this curve is $g=(d-1)(p-1)/2$, and for $d=5$ and $p=3$ the curve has genus $4$ so its Jacobian will be in our database.
How can we find it?
It turns out that the point counts of such a curve can be explicitly computed:
 $$ \#X_{f,q}(\FF_{q^n}) = 1+ \left ( \sum_{\psi \in \widehat{\FF_q}\setminus \lbrace 1 \rbrace } S(\psi,n) \right) + q^n. $$
In the above expression the $\psi$'s are additive characters and the $S$'s are character sums given by
$$S(\psi,n)  := 	\sum_{b \in \FF_{q^n}}\psi( \Tr_{q,n}(f(b)) ),\qquad \Tr_{q,n}(a) := \sum_{i=0}^{n-1} a^{q^i}.$$ 
All of the additive characters of $\FF_q$ are parametrized by $a \in \FF_p$ and take the form $\chi_a\colon \FF_q \to \CC$ where
$$ \chi_a(b) = \zeta_p^{a \Tr_{\FF_{q}/\FF_p}(b) }. $$
For details on this computation see \cite[\S2.1 and \S2.2]{Mueller2013}.

In the special case where $f(x) = x^5$, 
we computed the values of these characters using Sage, and found that $\#X(\FF_{3^n})=4,10,28,154,244$ for $n=1,2,3,4,5$. 
Searching our database gives \avlink{4.3.a\_a\_a\_s}, as the isogeny class of the Jacobian.
This isogeny class contains supersingular abelian varieties, as expected. 

In the special case $f(x) = x^5+2x+1$, 
we again compute $\#X(\FF_{3^n})=1,7,55,91,271$ for $n=1,2,3,4,5$, and find that the Jacobian belongs to \avlink{4.3.ad\_d\_j\_abb}, 
another non-simple supersingular isogeny class; this time the isogeny factors are not isogenous to each other. 

\end{example}

\begin{example}
The isogeny class \avlink{2.2.a\_ad}
provides an example of a principally polarizable isogeny class of abelian varieties of dimension 2 over $\FF_2$ which does not contain a Jacobian (Example~\ref{exa:Shankar-Tsimerman} is another example in higher dimension). 

Also, as discussed previously, there exist principally polarizable abelian varieties which have isogeny factors whose isogeny classes are not principally polarizable.
For instance, the $g=4,q=5$ isogeny class \avlink{4.5.ak\_bp\_adq\_hc}
has a $g=2$ isogeny factor 
$\avlink{2.5.ac\_ab}$
which contains no principally polarizable abelian surface.

For interested readers, we point out that our methods don't allows us to determine if the particular $g=4$ simple isogeny class \avlink{4.5.ag\_o\_au\_bj} contains a principally polarizable abelian variety. 
We are unable to do so whenever the isogeny class is not ordinary, and its associated CM field $K=\QQ(\pi)$ (which in this particular case is \nflink{8.0.268960000.3}) is unramified over its totally real subfield $K_+$ and every prime of $K_+$ dividing $\pi-q/\pi$ is not inert in $K/K_+$. 
In short, the condition in \cite[Theorem 1.1]{Howe1996} summarized in \S\ref{S:principal-polarizations} does not completely answer the question: ``Does this non-ordinary isogeny class contain a principally polarizable variety?"
\end{example}

\begin{remark}
Howe points out to us that the 2-dimensional isogeny class  \avlink{2.2.a\_ad} presented in the above example is one from a family that is in the Appendix to \cite{Maisner2002}.
The theorem in \emph{op.\ cit}.\ is that no polynomial of the form $T^4+(1-2q)T^2 + q^2$ is the Weil polynomial of a Jacobian of a curve over $\FF_q$. 
One can also see that such an isogeny class contains a principally polarized abelian variety. It is the restriction of scalars from $\FF_{q^2}$ to $\FF_q$ of an elliptic curve with Weil polynomial $T^2 + (1-2q)T+ q^2$.
\end{remark}

\begin{example}\label{E:maxmin}
	In this example we show that maximality or minimality of a curve has nothing to do with the associated Jacobian being maximal or minimal as an abelian variety in the sense of \S \ref{subsec:maximal-minimal}. 
	Indeed, the isogeny class \avlink{3.3.aj\_bk\_add} is minimal for $q=3$ and $g=3$ with $\#A(\FF_3)=1$, but it does not contain a Jacobian since its virtual curve count has $\#C(\FF_3)=-5$.
	Therefore, a minimal curve of genus $3$ over $\FF_3$ cannot have minimal Jacobian.
\end{example}

\begin{example}
	When $g=6$ and $q=2$ there exist isogeny twists \avlink{6.2.a\_ac\_a\_c\_a\_a} and \avlink{6.2.a\_c\_a\_c\_a\_a} which have different number fields (with the same
	discriminant).  
	The two number fields have a different number of places
	above 2 (3 in one case and 4 in the other), so the Brauer invariants of
	their endomorphism algebras are different: $(0,0,0)$ vs $(0,0,0,0)$.
\end{example}

\begin{example}[communicated to us by Howe]\label{E:isogenoustopp}
Consider isogeny class \avlink{2.2.b\_b}, which has characteristic polynomial $P(T)=T^4+T^3+T^2+2T+4$. 
The ring $\ZZ[\pi,q/\pi]$ is a maximal order in a field with class number one, and therefore this isogeny class contains a single isomorphism class of abelian varieties.
Furthermore, this abelian variety must have a principal polarization (by \cite{Howe1995} or by the more general Theorem 1 of \cite{Howe1996}). This shows that while over an algebraically closed field of characteristic 0 every abelian variety is isogenous to an abelian variety that does not admit a principal polarization, this is not true in general (cf. \S \ref{S:principal-polarizations}).
\end{example}

\subsection{Supersingular Curves}

It is not clear whether every eligible Newton polygon can occur for curves of a given characteristic (see Remark \ref{rem:NSdim}), and the following is a related long-standing open problem.
\begin{question}
For every prime $p$ and every positive integer $g$, does there exist a genus $g$ curve over $\overline{\FF}_p$ whose Jacobian is supersingular?
\end{question}

This is known for $g \leq 4$ \cite{Kudo2019}.
It is also known for all $g$ when $p=2$ \cite{VanderGeer1995};
however, these curves cannot always be taken to be hyperelliptic \cite{Scholten2002}.
Some higher-genus cases are treated in \cites{Li2018, Li2019}; however, the following example is not covered by those papers (nor by \cite{VanderGeer1995}, which handles some genera for $p>2$).

\begin{example} \label{exa:hyperelliptic supersingular genus 5}
Let $C$ be the hyperelliptic curve over $\FF_3$ given by
\[
y^2 = x^{11} + 2x^9 + x^5 + x^3 + x.
\]
Then
\[
L(C/\FF_3, T) = 1 + 3T^2 + 81T^8 + 243T^{10},
\]
so the Jacobian $A$ of $C$ is supersingular and belongs to isogeny class \avlink{5.3.a\_d\_a\_a\_a}.
\end{example}

\begin{remark}
One can also consider curves whose Jacobians are \emph{superspecial}, meaning that they are isomorphic
(not just isogenous) to a product of supersingular elliptic curves. 
Ekedahl \cite{Ekedahl1987} showed there do not exist superspecial genus $4$ curves in characteristics $2$ and $3$, and asked whether conversely they exist in all characteristics at least 5; this question was answered negatively by Kudo--Harashita \cite{Kudo2017}, who showed that none exist in characteristic $7$. 
\end{remark}

\begin{remark} \label{rem:NSdim}
The moduli space $\mathcal{M}_g$ of curves of genus $g$ has dimension $3g-3$, which is much smaller than the length of a maximal chain of inclusions of Newton strata in $\A_g$, which is asymptotic to $g^2/4$ (see Corollary \ref{cor:NPcat}).  However, unlike $\A_g$, which has irreducible strata except for the supersingular case, Newton strata in $\mathcal{M}_g$ can be much more reducible.  While it is conceivable that there are enough strata in $\mathcal{M}_g$ to account for all possible Newton polygons, it does not seem likely.\end{remark}

\subsection{Ordinarity and Angle Ranks}

The following is a conjecture of Ahmadi--Shparlinski.
\begin{conjecture}[{\cite[\S 5]{Ahmadi2010}}] \label{conj:Ahmadi-Shparlinski}
Every ordinary geometrically simple Jacobian has maximal angle rank. 
\end{conjecture}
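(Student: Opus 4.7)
The plan is to combine Zarhin's criterion for maximal angle rank with the clean CM-theoretic description of ordinary simple abelian varieties. Under the hypotheses of the conjecture, the final remark of \S\ref{subsec:Honda-Tate} gives $e_A = 1$, so $P_A$ is irreducible and $K := \QQ(\pi_A)$ is a CM field of degree exactly $2g$, with geometric simplicity forcing that no power $\pi_A^n$ lies in a proper CM subfield. I would then reformulate the angle rank as the $\ZZ$-rank of $\Gamma := \langle \alpha_1, \dots, \alpha_{2g} \rangle$ modulo torsion and powers of $\sqrt{q}$, as in \S\ref{ssec:angle rank3}, and analyze $\Gamma$ through its image in $\OO_{L,S}^{\times}$, where $L$ is the Galois closure of $K$ and $S$ is the set of primes of $L$ above $p$.

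Next, I would use ordinariness to pin down the $p$-adic structure: each $\alpha_i$ generates a distinct prime of $K$ above $p$, with the $p$-adic valuations of the $\alpha_i$ encoding the CM type $\Phi$ of $A$. A nontrivial multiplicative relation among the $\alpha_i$ (modulo the obvious relations $\alpha_i \overline{\alpha}_i = q$) would then translate into a nontrivial $\ZZ$-linear relation among the characters of $\Phi$ in the regular representation of $\Gal(L/\QQ)$. The first substantive step is a purely CM-theoretic lemma---essentially in the spirit of Kubota and Dodson---classifying the primitive CM types for which such a nontrivial relation can hold, and ideally showing that this cannot happen when $K$ itself is generated by a single Weil number with no power in a proper CM subfield.

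The main obstacle, and the step I expect to fail, is leveraging the Jacobian hypothesis effectively. Even for general ordinary geometrically simple abelian varieties, primitivity of $\Phi$ does not by itself prevent multiplicative degeneracy: there exist primitive CM types whose associated Frobenius torus has rank strictly smaller than $g$, so one cannot reduce the conjecture to a purely CM-theoretic statement. To salvage it, one would need to use the Torelli locus in an essential way, for instance by showing that the locus of ``defective" Jacobians inside $\A_g$ has positive codimension inside the Torelli image, or by leveraging some monodromy constraint specific to Jacobian families. I do not see a clear route to such a transversality statement, and the excerpt's advertised counterexamples (Example~\ref{exa:Ahmadi-Shparlinski}) strongly suggest that this is precisely where the approach breaks down; before investing further effort I would search the database for explicit ordinary geometrically simple Jacobians with $\delta(A) < g$ to confirm the expected failure.
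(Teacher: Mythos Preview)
The statement is a \emph{conjecture}, and the paper does not prove it; on the contrary, the paper refutes it. Your proposal is therefore not so much a proof as a (correct) diagnosis of why no proof exists, and your instincts are sound: the CM-theoretic reduction to multiplicative relations among Frobenius eigenvalues is indeed the natural framework, and you correctly identify that primitivity of the CM type does not rule out rank-deficient Frobenius tori. You are also right that the Jacobian hypothesis is the crux, and that there is no evident mechanism---monodromy, transversality, or otherwise---by which the Torelli locus avoids the defective locus.

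The paper's treatment confirms exactly this. It records that the conjecture holds in dimensions $2$ and $3$ (by theorems of Ahmadi--Shparlinski and Zarhin respectively), and verifies it computationally in dimension $4$ over $\FF_2$ by exhausting over all genus-$4$ curves. But Example~\ref{exa:Ahmadi-Shparlinski} then exhibits explicit hyperelliptic curves of genus $4$ over $\FF_3$ and $\FF_5$ whose Jacobians are ordinary, geometrically simple, and have angle rank $3 < 4$. So your final sentence---search the database for counterexamples before investing further---is precisely what the authors did, and the counterexamples are there.
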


This is verified in the database in dimensions 2 and 3.
This is a theorem in dimension 2, even without the ordinary condition: \cite[Theorem~2]{Ahmadi2010}.
It is also a theorem in dimension 3, but this time it requires the ordinary condition:
\cite[Theorem~1.1]{Zarhin2015}.

We verified the conjecture in dimension 4 over $\FF_2$ as follows.
According to the LMFDB, there are 52 isogeny classes of ordinary, geometrically simple abelian varieties with angle rank at most 3 (in fact they are all equal to 3). 
Since the LMFDB does not yet contain full data about whether an isogeny class in dimension greater than 3 contains a Jacobian, we used the fact that every nonhyperelliptic genus 4 curve is the intersection of a quadric and a cubic in $\PP^3$
to compute the zeta functions of all genus 4 curves over $\FF_2$. 
We found 620 distinct zeta functions, none of which occur among the previous list of 52. 
(This result has been independently confirmed by Xarles \cite{Xarles2020}.)

As an aside, note that \cite[Theorem~1.1]{Zarhin2015} implies that for each of the 52 isogeny classes in the previous paragraph, the endomorphism algebra must contain an imaginary quadratic field, which we have confirmed.

By contrast, the conjecture fails in dimension 4 over $\FF_3$ and $\FF_5$, as shown by the curves given in Example \ref{exa:Ahmadi-Shparlinski}. 
We also note in passing that Conjecture~\ref{conj:Ahmadi-Shparlinski} is incompatible with Conjecture~\ref{conj:Shankar-Tsimerman} below.

\begin{example}
\label{exa:Ahmadi-Shparlinski}
Let $C$ be the hyperelliptic curve over $\FF_3$ given by
\[
y^2 = x^9 + x^8 + x^7 + 2x^5 + x.
\]
Then
\[
L(C/\FF_3, T) = 
1 - T + 2T^2 - 4T^3 - 2T^4 - 12T^5 + 18T^6 - 27T^7 + 81T^8,
\]
so the Jacobian $A$ of $C$ belongs to isogeny class \avlink{4.3.ab\_c\_ae\_ac}.
We see that $A$ is ordinary, geometrically simple, and has angle rank 3. 
It thus constitutes a counterexample to the Ahmadi--Shparlinski conjecture (Conjecture~\ref{conj:Ahmadi-Shparlinski}).
Consistently with Zarhin's theorem, the endomorphism algebra contains the field $\QQ(\sqrt{-7})$.

Similarly, let $C$ be the hyperelliptic curve over $\FF_5$ given by 
\[
y^2 = x^9 + x^6 + 2x^5 + x.
\]
Then
\[
L(C/\FF_5,T) = 
1-T+2T^2-4T^3+16T^4-20T^5+50T^6-125T^7+625T^8,
\]
so the Jacobian $A$ of $C$ belongs to isogeny class \avlink{4.5.ab\_c\_ae\_q}.
Again, $A$ is ordinary, geometrically simple, and has angle rank 3.
The endomorphism algebra contains the field $\QQ(\sqrt{-15})$.
\end{example}

\subsection{Function Fields of Class Number One}

\begin{example}[{\cite{Stirpe2014}} and {\avlink{4.2.ad\_c\_a\_b}}] \label{exa:class number one}
The following is the LMFDB annotation for {\avlink{4.2.ad\_c\_a\_b}}.
In \cite{Stirpe2014}, Stirpe exhibited an example of a genus 4 curve $C/\FF_2$ for which
\[
L(C/\FF_2, T) = 1 - 3 T + 2 T^{2} + T^{4} + 8 T^{6} - 24 T^{7} + 16 T^{8}.
\]
The Jacobian $A$ of this curve belongs to isogeny class \avlink{4.2.ad\_c\_a\_b}.

This example is notable because $C$ has the largest possible genus among curves over finite fields with trivial class group, and because it refuted a published result from almost 40 years earlier. In \cite{Leitzel1975}, it was shown (correctly) that there are seven such curves of genus at most 3 
and at most one of genus 4; it was also claimed (incorrectly) that the genus 4 case could be ruled out.
Correct proofs of the classification can be found in \cite{Mercuri2015} and \cite{Shen2015}.
\end{example}

\subsection{Hypersymmetric Abelian Varieties}
Let $A$ be an abelian variety over a field $k \supset \FF_p$. 
Following Chai--Oort \cite[Definition~2.1]{Chai2006},
we say that $A$ is \emph{hypersymmetric} if  
  $$ \End(A_{\overline{k}})_{\ZZ_p} \cong \End(A_{\overline{k}}[p^{\infty}]). $$ 
These are meant to provide a positive-characteristic analogue of CM points in the moduli space of abelian varieties.

According to \cite[Theorem~3.3]{Chai2006}, for a simple abelian variety, one can read off whether it is hypersymmetric explicitly from the Frobenius polynomial.

Here is an explicit example.

\begin{example} \label{exa:hypersymmetric}
We exhibit a simple hypersymmetric abelian threefold over $\FF_8$ by verifying that
\avlink{3.8.ag\_bk\_aea} satisfies the conditions of \cite[Conclusion~3.6]{Chai2006}.
The Newton polygon has slopes $1/3$ and $2/3$ each with multiplicity 3, so it is \emph{balanced} in the sense of \cite[Definition~3.4]{Chai2006}. The prime 2 splits completely in $\QQ(\pi) = \QQ(\sqrt{-7})$, and the Brauer invariants of the endomorphism algebra at the places above 2 are again $1/3$ and $2/3$.

The same conditions are satisfied by the quadratic isogeny twist \avlink{3.8.g\_bk\_ea}. We have checked that other than elliptic curves (which are all hypersymmetric),
these are the only examples of hypersymmetric abelian varieties currently found in the LMFDB.
\end{example}

\subsection{Isomorphic Endomorphism Algebras and Different \texorpdfstring{$p$}{p}-ranks}

\begin{example}
This example is a modified version of \cite[Example~4.2]{Gonzalez1998}. (That example starts over $\FF_3$ rather than $\FF_2$, but we do not currently have abelian threefolds over $\FF_{27}$ in the LMFDB.)

Let $A$ be an abelian threefold over $\FF_2$ in the isogeny class \avlink{3.2.ad\_c\_b}. Then $A$ is simple, ordinary, and of $p$-rank 3, and its endomorphism algebra is $\QQ(\zeta_7)$. Although $A$ is not geometrically simple, its base change
to $\FF_8$, which belongs to the isogeny class \avlink{3.8.ag\_bd\_adf}, is again simple.

Let $B$ be an abelian threefold over $\FF_8$ in the isogeny class \avlink{3.8.ag\_i\_i}; the Weil number for $B$ is twice that for $A$. Here $B$ is geometrically simple of $p$-rank 0, and its endomorphism algebra is again $\QQ(\zeta_7)$.
\end{example}

\begin{example}
There are also examples where both $A$ and $B$ are geometrically simple and have the same endomorphism algebra but different $p$-ranks.  For example, abelian varieties in the isogeny class \avlink{3.2.a\_c\_c} have $p$-rank $0$ while abelian varieties in the isogeny class \avlink{3.2.d\_f\_h} have $p$-rank $3$; both have endomorphism algebra isomorphic to the number field \nflink{6.0.679024.1}.
\end{example}

\subsection{Abelian Fourfolds as Jacobians}

For $g \geq 4$, a generic principally polarized abelian variety is not isomorphic to a Jacobian for dimension reasons. However, when working up to isogeny, it becomes much less clear what to expect.
Using ideas from the theory of unlikely intersections, Shankar--Tsimerman \cite{Shankar2018} have made numerous observations about this question, including the following conjecture.

\begin{conjecture}[{\cite[Conjecture~2.5]{Shankar2018}}] \label{conj:Shankar-Tsimerman}
Every $4$-dimensional abelian variety over $\overline{\FF}_p$ is isogenous to the Jacobian of some (possibly reducible) stable curve.
\end{conjecture}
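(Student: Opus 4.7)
The plan is to reduce to the case of a geometrically simple abelian fourfold and then to attempt a density argument in moduli. For the reduction, suppose $A$ is isogenous to a product $A_1 \times A_2$ with $1 \leq \dim A_i \leq 3$. In dimensions at most $3$ every abelian variety over $\overline{\FF}_p$ is already known to be isogenous to a Jacobian of a (possibly reducible) stable curve: for $g=1$ trivially; for $g=2$ via the fact that every abelian surface is isogenous to a principally polarized one, which is either $\Jac(C)$ for a smooth genus $2$ curve $C$ or a product $E_1 \times E_2 = \Jac(E_1 \vee E_2)$; and for $g=3$ analogously, using that $\mathcal{M}_3 \to \A_3$ is dominant and the non-Jacobian principally polarized abelian threefolds are decomposable. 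Wedging the resulting curves $C_1, C_2$ at smooth points yields a reducible stable curve of arithmetic genus $4$ whose Jacobian is $\Jac(C_1) \times \Jac(C_2)$, hence isogenous to $A$.

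So assume $A$ is geometrically simple. Since $\dim \A_4 = 10$ and $\dim \mathcal{M}_4 = 9$, the open Torelli locus has codimension one in $\A_4$; including boundary strata from reducible stable curves enlarges this subvariety but does not lower its codimension. The natural strategy is to combine two ingredients: (i) density of the $\overline{\FF}_p$-isogeny class of $A$ in its Newton stratum, via a Hecke-orbit result in the spirit of Chai--Oort, and (ii) a non-emptiness statement showing that the enlarged Torelli locus meets every Newton stratum of $\A_4$ in a subvariety of the expected dimension (using explicit constructions of curves with prescribed Newton polygon, as in \cite{Li2018} and the family of examples underlying Example~\ref{exa:hyperelliptic supersingular genus 5}).

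The principal obstacle is that the Torelli locus has codimension exactly one. Zariski density of a countable set in a variety does not force it to meet a codimension-one subvariety; this is the borderline case where naive dimension counts fail. Overcoming it requires an equidistribution-style input beyond mere density, which is precisely the thrust of the ``unlikely intersections'' philosophy invoked by Shankar--Tsimerman; alternatively, one may try to exhibit the enlarged Torelli locus as sufficiently ``ample'' inside each Newton stratum so that any Hecke-invariant dense subset is forced to intersect it. The supersingular case is particularly sharp: there the isogeny class reduces to a finite union of irreducible components of the supersingular locus, and the conjecture specializes to the question of whether every geometrically simple supersingular abelian fourfold is isogenous to the Jacobian of some (possibly reducible) stable curve -- a question closely related to, and arguably harder than, the supersingular existence problems illustrated in \S\ref{S:examples}.
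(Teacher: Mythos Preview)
The statement you are addressing is a \emph{conjecture}, not a theorem: the paper attributes it to Shankar--Tsimerman \cite[Conjecture~2.5]{Shankar2018} and offers no proof, only the surrounding discussion and the stress-tests of Examples~\ref{exa:Shankar-Tsimerman} and following. So there is no ``paper's own proof'' to compare against, and your write-up is not a proof either --- as you yourself acknowledge in the final two paragraphs.

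That said, your proposal is a reasonable account of the \emph{heuristic landscape}. The reduction to the geometrically simple case via wedging stable curves is correct and standard. The identification of the codimension-one Torelli locus as the essential obstruction is exactly right, and your remark that Zariski density of a countable set does not force intersection with a divisor is the crux of why this remains open. Where you should be careful: invoking Chai--Oort Hecke-orbit density is delicate in positive characteristic (the full conjecture is itself not known in all cases), and the supersingular case is not merely ``arguably harder'' --- there the isogeny class is a single class, so density arguments collapse entirely and one needs explicit constructions. In short, what you have written is a fair sketch of why Conjecture~\ref{conj:Shankar-Tsimerman} is plausible and hard, but it should be presented as motivation or commentary, not as a proof attempt.
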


Notably, however, it is not predicted that a 4-dimensional abelian variety over $\FF_q$ is isogenous to the Jacobian of a curve over $\FF_q$. This can fail to occur, as in the following example.

\begin{example} \label{exa:Shankar-Tsimerman}
Consider the isogeny class \avlink{4.2.c\_c\_ac\_af} over $\FF_2$.
This class contains a principally polarized abelian fourfold $A$
which is not isogenous to a Jacobian: if it were, the corresponding curve
would have a negative number of $\FF_8$-points (see \S\ref{subsec:curve counts}).
However, the base change of $A$ to $\FF_4 = \FF_2[\alpha]$, which belongs to the isogeny class
\avlink{4.4.a\_c\_i\_j}, is isogenous to the Jacobian of the hyperelliptic curve
\[
y^2 + (\alpha x^4 + x^3 + x^2 + x + \alpha)y = x^9 + x^8.
\]
\end{example}

We can also stress-test the conjecture by considering the following example.
\begin{example}
Consider the isogeny class \avlink{4.2.a\_a\_b\_af} over $\FF_2$.
This isogeny class is geometrically simple and ordinary.
It cannot contain a Jacobian over any subfield of $\FF_{16}$: if it did, the corresponding curve would have
a negative number of $\FF_{16}$-points.
In addition, Howe's criterion 
(see \S\ref{S:principal-polarizations}) implies that this isogeny class cannot contain a principally polarized abelian fourfold
over $\FF_{2^g}$ for any odd integer $g$. Consequently, if Conjecture~\ref{conj:Shankar-Tsimerman} holds,
then this isogeny class contains a Jacobian over $\FF_{2^g}$ for some even integer $g \geq 6$.
\end{example}

\subsection{Distinguishing Isogeny Classes by Point Counts}
The Weil polynomial of an abelian variety of dimension $g$ has $g$ unknown coefficients, so it is expected that these can be solved for using $g$ point counts. 
It turns out that we can often do better than this. 
This exotic phenomenon is governed by information-theoretic heuristics (see Remark~\ref{R:information-theory}).
\begin{example}
For $A$ a 5-dimensional abelian variety over $\FF_2$, 
$A$ is determined up to isogeny by the tuple $(\#A(\FF_{2^i}): i = 1,\dots,4)$. This is best possible: for example, an abelian variety $A$ in any of the isogeny classes
\begin{center}
\avlink{5.2.ab\_b\_c\_d\_ac},
\avlink{5.2.ab\_c\_a\_a\_i},
\avlink{5.2.ab\_c\_b\_a\_d}
\end{center}
satisfies $(\#A(\FF_2), \#A(\FF_4), \#A(\FF_8)) = (42, 2520, 80262)$.
By contrast, over $\FF_3$, a 5-dimensional abelian variety is determined up to isogeny by the tuple $(\#A(\FF_{3^i}): i = 1,\dots,3)$.
\end{example}

\begin{example}
Similarly, for $A$ a 6-dimensional abelian variety over $\FF_2$, 
$A$ is determined up to isogeny by the tuple $(\#A(\FF_{2^i}): i = 1,\dots,4)$. This is
best possible even if we restrict to simple abelian varieties: for example, an abelian variety $A$ in any of the isogeny classes
\begin{center}
\avlink{6.2.ab\_ab\_b\_g\_ab\_aj},
\avlink{6.2.ab\_b\_ab\_ac\_f\_ad},
\avlink{6.2.ab\_c\_ab\_ad\_f\_ap}
\end{center}
is geometrically simple and satisfies $(\#A(\FF_2), \#A(\FF_4), \#A(\FF_8)) = (42,4032,246078)$.
\end{example}

\begin{remark}\label{R:information-theory}
It is not known exactly how many initial point counts are needed to identify a $g$-dimensional abelian variety over $\FF_q$ up to isogeny (for known $g$ and $q$). The fact that the complete sequence of point counts determines the Weil polynomial is already nontrivial; it follows from a theorem of Fried
\cite{Fried1988} (see also \cite{Hillar2005}).
Using the Weil bounds, it is shown in \cite{Kedlaya2006b} that
at most $\max\{18,2g\}$ counts suffice; however, Noam Elkies has pointed out that on information-theoretic grounds, one should expect the number of counts needed to be about $g/2$, and indeed this is consistent with these examples. Namely, we need to distinguish among $O(q^{g(g+1)/4})$ Weil polynomials
(\S \ref{subsec:number of isogeny classes}), whereas the Lang-Weil bound 
(\S \ref{ssec:point_count_bounds}) implies that the number of possible values for the tuple $(\#A(\FF_{q^i}))_{i=1}^n$ is $O(q^{n^2/2})$.
\end{remark}

\section{Possible Generalizations and Bottlenecks}
\label{S:future directions}

We conclude with some discussion about possible future directions for this work.

\subsection{Bottlenecks} \label{ssec:bottlenecks}

We first identify some bottleneck steps that limited our original work, and which we would like to overcome.

As described in \S\ref{ssec:angle rank3}, we currently present Frobenius angle ranks which were computed nonrigorously using floating-point arithmetic, because it is not feasible to run the rigorous algorithm on all cases in the database. However, since we also compute Galois groups, we can use those to certify some cases as having maximal angle rank. This covers the vast majority of cases, which might make it feasible to run the rigorous algorithm on the rest, but our present methods rely on the computation of a splitting field, which is also very costly.

As has come up on several occasions already, we do not currently implement positive Jacobian testing for abelian varieties of dimension greater than or equal to 4. See \S\ref{ssec:Jacobians} for further discussion.

One obstruction to adding complete tables of abelian varieties for other pairs $(g,q)$ is the overall size of the dataset.  Using equation \eqref{eq:DiPippoHowe}, including $(7,2)$ would add about $2.2$ million isogeny classes, $(6,3)$ about $10$ million, $(5,4)$ about $2.2$ million, $(4,7)$ about $700000$, and $(3, 27)$ about $450000$.  For comparison, the database currently contains about $3$ million isogeny classes and takes up about $10$ GB.  It would certainly be feasible to extend, but a line needs to be drawn somewhere.  Moreover, the computational time required per isogeny grows quickly with $g$, so adding data with $g \ge 5$ takes more effort than suggested by the number of classes alone. 

In lieu of enlarging the tables, one could also implement the computation of the data presented in LMFDB
on a case-by-case basis for individual isogeny classes. One piece of data that would be difficult to compute
in this way is the isogeny twists, which we currently do by finding hash collisions across the entire table
(\S \ref{ssec:base change}).

It would also be useful to have a mechanism to produce random elements of the set of isogeny classes for large $g$ and $q$. It should be possible to effectively simulate the uniform distribution on isogeny classes by computing analogues of the isogeny Sato-Tate distribution in which one projects onto the first $k$ polynomial coefficients (the isogeny Sato-Tate distribution being the case $k=1$).
An alternative approach may also be to take the existing Weil polynomial iterator discussed in Remark~\ref{rem:iterator} and replace iterations over integers in an interval with uniform random samples.

\subsection{Jacobians} \label{ssec:Jacobians}

Currently the LMFDB does not identify any isogeny class of abelian varieties of dimension at least 4 as containing a Jacobian. For dimension 4, it may be possible to exhaust over isomorphism classes of genus 4 curves using the fact that every such curve is either hyperelliptic or the transverse intersection of a quadric and a cubic surface in $\PP^3$; see \cite{Savitt2003} for a similar calculation.
For curves of genus 5, the analogous assertion is that every such curve is either hyperelliptic, trigonal, or the transverse intersection of three quadrics in $\PP^4$.
For curves of genus 6, the analogous assertion is that every such curve is either hyperelliptic, trigonal,
bielliptic, isomorphic to a smooth plane quintic, or birational to a plane sextic with four double points
 \cite[Exercises~V.A]{Arbarello1985}. (Beware that the previous assertions are made over an algebraically closed field.)

\subsection{Isomorphism Classes} \label{ssec:isormorphism classes}

Beyond the current data in the LMFDB, it would be extremely desirable to tabulate abelian varieties up to isomorphism, not just up to isogeny. 
The easiest cases for this are those of ordinary abelian varieties, for which an explicit description of isomorphism classes within an isogeny class has been given by Deligne \cite{Deligne1969}; abelian varieties over $\FF_p$, for which a similar description has been given by Centeleghe--Stix
\cite{Centeleghe2015}; and almost ordinary abelian varieties \cite{OswalShankar}. The recent work of Marseglia \cite{Marseglia2019} has made great strides towards
making these methods practical at the scale of the LMFDB, and we plan on working with him on including data on isomorphism classes into the database.

In order to handle nonordinary abelian varieties over nonprime fields, it is probably necessary to go
back to the proof of Honda's theorem, by constructing CM abelian varieties over number fields and then reducing them modulo primes. Note that whereas Honda's original approach to this in \cite{Honda1967} used complex uniformization and GAGA, a more recent construction of Chai--Oort \cite{CO15} gives a more algebraic approach that might be easier to implement as an algorithm.

\subsection{K3 Surfaces and Higher Weight}

It would be natural to attempt a similar compilation of other types of algebraic varieties over finite fields and their zeta functions. A strong candidate class for this is K3 surfaces, for which a weak version of the Honda-Tate theorem is known \cites{Taelman2016, Ito2019}.
More precisely, for a given candidate Weil polynomial in this setting, one can prove there exists a corresponding K3 surface in some base change of this isogeny class. 
 The code for tabulating Weil polynomials
described in \S\ref{ssec:enumerating Weil polynomials} can produce lists of possible zeta functons for K3 surfaces over $\FF_q$ for small $q$ (this has been tested up to $q=5$).

This suggests the question of trying to determine exactly which zeta functions occur for K3 surfaces over a given field. An indication of the difficulties involved can be seen in 
\cite{Kedlaya2015}, where a complete tabulation of smooth quartic surfaces in $\PP^3$ over $\FF_2$ and their zeta functions was made (computing the latter by enumerating points);
while this search did realize every eligible zeta function that could not occur for any other type of K3 surface,
not every zeta function that could have appeared did so. Furthermore, certain zeta functions can only appear for K3 surfaces of very large degree, which would be very difficult to write down explicitly (the moduli space of K3 surfaces of a given degree becomes increasingly hyperbolic as the degree increases).

A closely related case is that of cubic fourfolds. Some examples of zeta function computations to resolve specific existence questions for cubic fourfolds can be found in \cite{Addington2018} and \cite{Costa2019}.

In another direction, one can also identify a class of surfaces with small invariants (genus, irregularity, $K^2$); identify all Weil polynomials over $\FF_q$ (for some given small values of $q$) which could arise from a surface with the given invariants; then exhaust over the surfaces in question to see which polynomials arise and what ``isogeny classes'' they fall into. (In general it is not clear what geometric conditions correspond to an equality of Weil polynomials. One nontrivial example is that surfaces which are \emph{derived equivalent}, meaning that they have isomorphic bounded derived categories of coherent sheaves,
have the same zeta function \cite{Honigs2015}.)
In some cases, candidate polynomials can be ruled out because they would predict impossible point counts on the underlying variety; for example, this happens for K3 surfaces over $\FF_2$ as shown in
\cite{Kedlaya2015}.

\appendix

\section{Tables and Figures}


\end{center}
\caption{Normalized root discriminants for $g=6$.\label{F:disc-hist-6} }
\end{figure}

\begin{figure}[!htbp]
\begin{center}
	\includegraphics[scale=0.35]{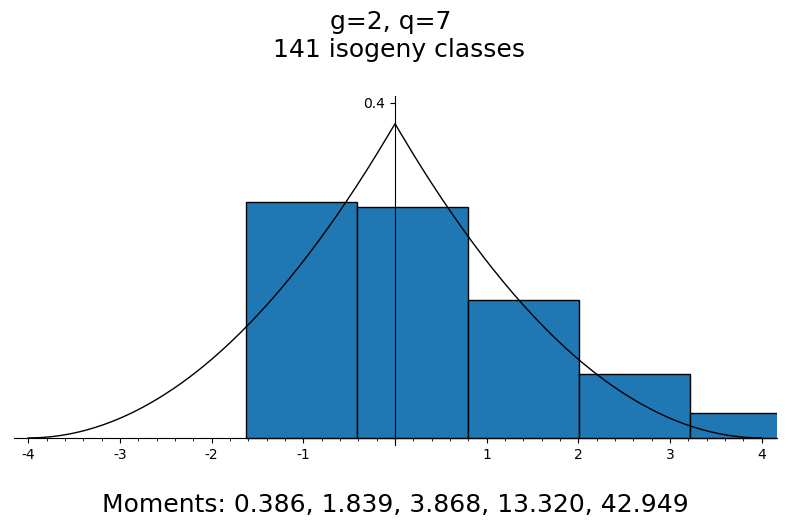}
	\includegraphics[scale=0.35]{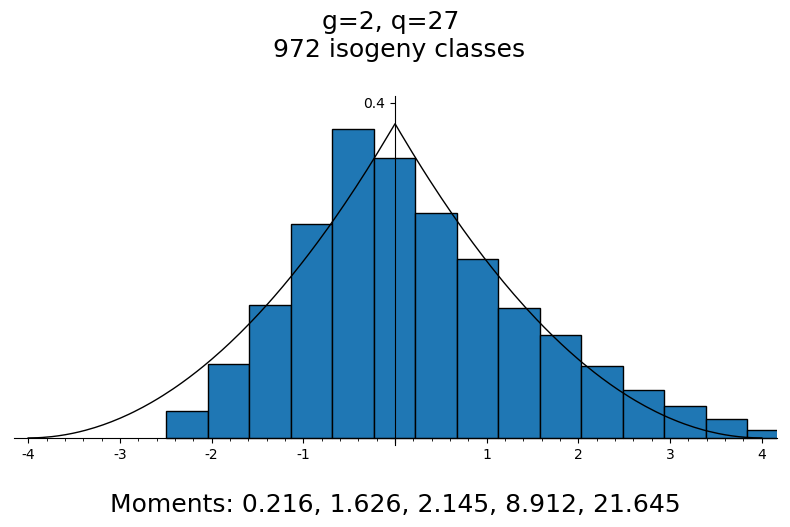}
\end{center}
\begin{center}
	\includegraphics[scale=0.35]{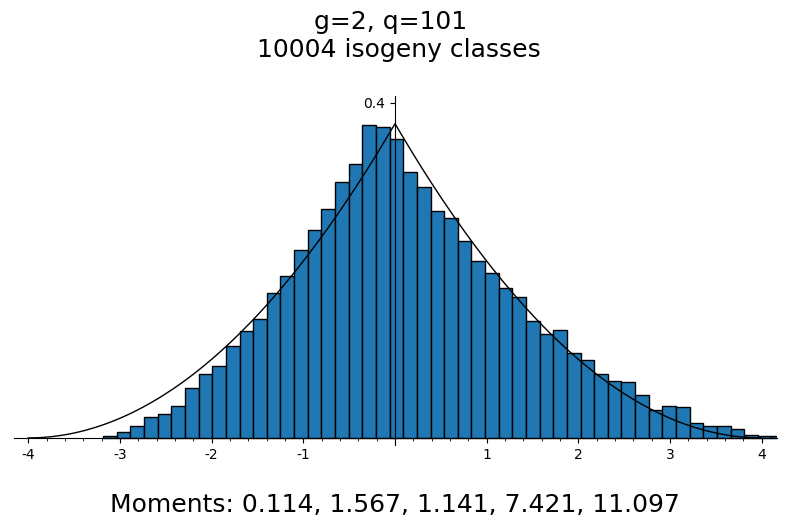}
	\includegraphics[scale=0.35]{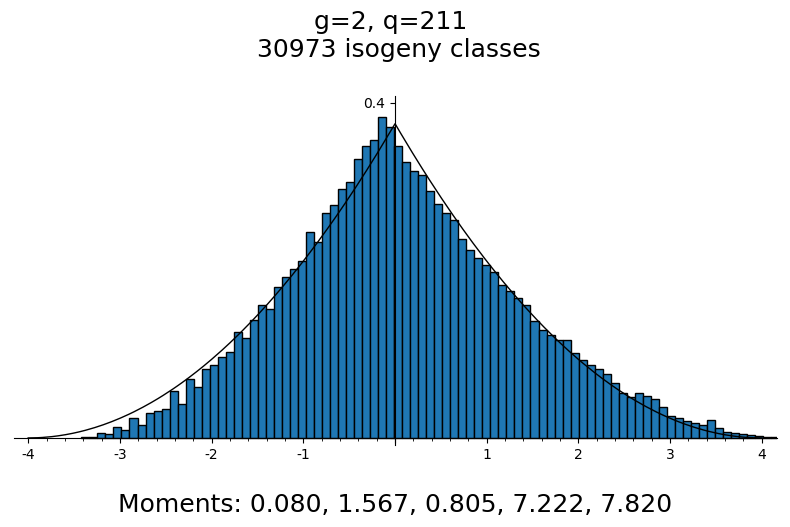}
\end{center}
\caption{Isogeny Sato-Tate distribution for $g=2$ with $q=7,27,101,211$.\label{F:notto2}}
\end{figure}

\begin{figure}[!htbp]
	\begin{center}
		\includegraphics[scale=0.35]{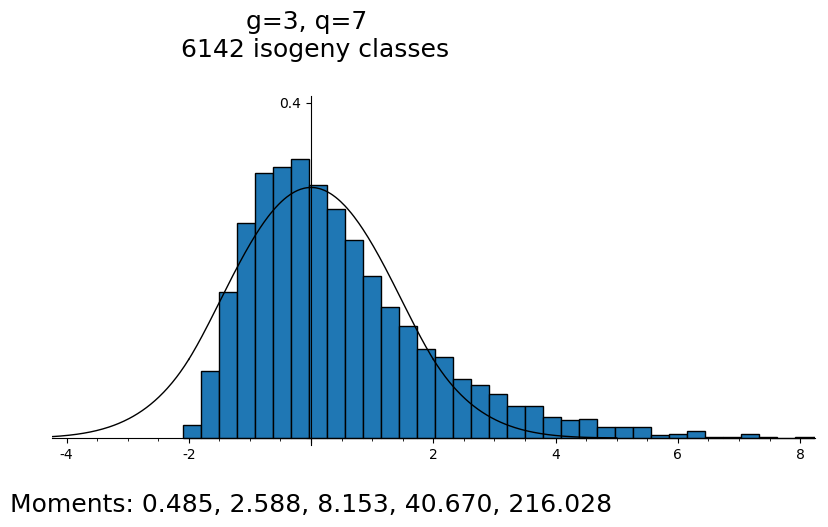}
		\includegraphics[scale=0.35]{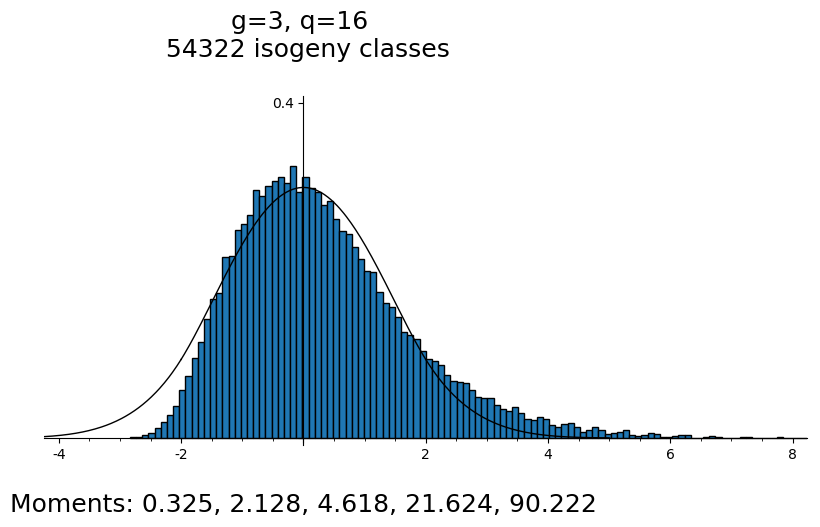}
	\end{center}
	\begin{center}
		\includegraphics[scale=0.35]{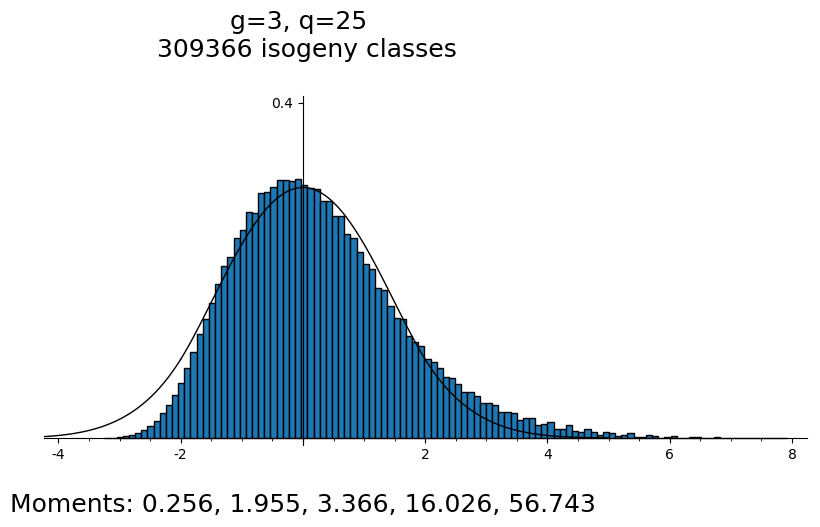}
	\end{center}
	\caption{Isogeny Sato-Tate distribution for $g=3$ with $q=7,16$ and $25$.\label{F:notto3}}
\end{figure}

\begin{figure}[!htbp]
    \begin{center}
		\includegraphics[scale=0.5]{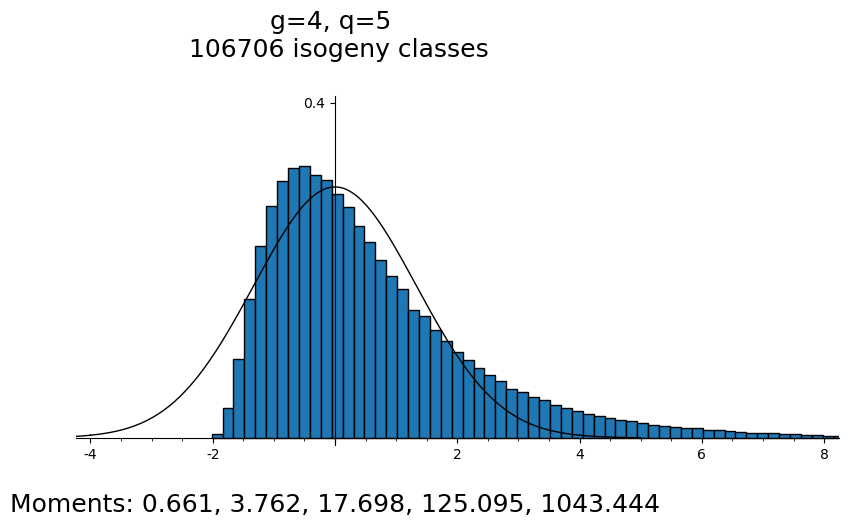}
	\end{center}
	\caption{Isogeny Sato-Tate distribution for $g=4$ and $q=5$.\label{F:notto4}}
\end{figure}

\clearpage

\bibliographystyle{alpha}
\bibliography{weil}
\end{document}